\documentclass[reqno,10pt]{amsart}
\usepackage{amscd,amssymb,amsmath,color,url,stmaryrd}
\usepackage{latexsym}
\usepackage[all]{xy}
\usepackage{defs}
\usepackage[colorlinks=true]{hyperref}
\usepackage{graphicx}
\usepackage{tikz}
\usetikzlibrary{patterns}
\SelectTips{cm}{10}
\everyxy={<2.5em,0em>:}
\usepackage[text={160mm,230mm},centering,headsep=2mm,footskip=5mm]{geometry}

\def\sqrtNJ{{\sqrt[N]{\cJ}}}
\def\sqrtNV{{\sqrt[N]{V}}}

\def\NN{{\bbN}}
\def\GG{{\bbG}}
\def\QQ{{\bbQ}}
\def\RR{{\bbR}}

\def\cProj{{\mathcal{P}roj}}

\def\nor{{\rm nor}}
\def\ord{{\rm ord}}
\def\Mord{{\underline{\rm ORD}}}
\def\Mordone{{\underline{\rm ORD}}_{>1}}
\def\mord{{\underline{\rm ord}}}
\def\ford{{\underline{\rm ford}}}
\def\hatcO{{\widehat\cO}}
\def\hatcI{{\widehat\cI}}
\def\hatcJ{{\widehat\cJ}}

\def\.{{,\dots,}}

\def\cJ{{\calJ}}
\def\cT{{\calT}}
\def\cR{{\calR}}

\def\cI{{\calI}}
\def\cO{{\calO}}
\def\cC{{\calC}}
\def\cF{{\calF}}

\def\cD{{\calD}}

\def\ocJ{{\overline{\cJ}}}

\def\cN{{\calN}}
\def\cQ{{\calQ}}

\begin{document}
\title{Dream resolution and principalization II: excellent schemes}
\author{Michael Temkin}

\thanks{This research is supported by ERC Consolidator Grant 770922 - BirNonArchGeom and  BSF grant 2018193. The final part of this research work was done during my visits at IHES and MPIM-Bonn. The author is grateful to D. Abramovich for careful reading the manuscript and making numerous valuable suggestions and to D. Abramovich, A. Belotto, J. W{\l}odarczyk and M. McQuillan for valuable discussions.}

\address{Einstein Institute of Mathematics\\
               The Hebrew University of Jerusalem\\
                Edmond J. Safra Campus, Giv'at Ram, Jerusalem, 91904, Israel}
\email{michael.temkin@mail.huji.ac.il}

\subjclass{14E15}
\keywords{Resolution of singularities, principalization, weighted blowings up, quasi-excellent schemes}
\maketitle
\begin{abstract}
This is the second paper in a project on dream (or memoryless) principalization and resolution methods. It extends this theory from the case of schemes with enough derivations, which was established in \cite{dream_derivations}, to general excellent schemes of characteristic zero. So, similarly to McQuillan's approach developed in \cite{McQuillan}, the approach of \cite{ATW-weighted} is now extended to the generality of all excellent schemes of characteristic zero. In addition, we precisely describe the set of invariants of canonical centers and establish the resolution in the non-embedded form, where one applies simple (stack-theoretical) modifications along subschemes of a special form that we call tubes. In the regular case these are precisely the subschemes corresponding to canonical centers.
\end{abstract}

\section{Introduction}

\subsection{Background and motivation}

\subsubsection{Motivation and main results}
By a {\em dream resolution or principalization algorithm} we mean those that have no memory -- at each step an invariant of the singularity and the modification center are computed independently, and the invariant drops after blowing up the center. Such algorithms were discovered recently and they exist in various contexts, including varieties, log varieties, foliated varieties, etc. This paper is a sequel of \cite{dream_derivations}, where a dream principalization on regular schemes (or stacks) of characteristic zero with enough derivations was constructed and dream resolution of schemes locally embeddable into such regular schemes was deduced. In fact, it was checked that the algorithm of \cite{ATW-weighted} for varieties extends to this generality, though some arguments were modified (and hopefully simplified) so that it will be easier to transport them to other settings in future research. The goal of this paper is to extend the same algorithms to their maximal possible generality -- those of quasi-excellent schemes of characteristic zero (in fact, the qe schemes we will work with are excellent, see Remark~\ref{catenaryrem}(ii)).

First, we will show that the {\em dream principalization} indeed extends to arbitrary regular excellent schemes. The precise definition is given in \S\ref{princsec}, but loosely speaking at each step of the dream principalization one finds the maximal $\cI$-admissible center $\cJ$, called the {\em canonical center}, blows up, stack-theoretically, its appropriate power $\cJ^{1/N}$ so that the blown up stack stays regular, transforms $\cI$ by dividing its pullback by the pullback of $\cJ$, and achieves in this way that the multiorder of the next canonical center is strictly smaller.

\begin{theor}\label{principalizationth}
Let $X$ be a noetherian regular excellent stack of characteristic zero and $\cI$ an ideal on $X$, then:

(i) The dream principalization sequence of $\cI$ is finite and ends with $X_n$ such that $\cI_n=\cO_{X_n}$ is trivial.

(ii) The sequence $X_n\to\dots\to X_0=X$ is trivial over the complement of $V(\cI)$. In particular, it is birational over each component of $X$ not lying in $V(\cI)$.

(iii) The sequence provides a principalization of $\cI$: the ideal $\cI\cO_{X_n}$ is invertible.

(iv) The sequence is functorial with respect to any regular morphism $f\:Y\to X$: the dream principalization of $\cI\cO_Y$ is obtained from the dream principalization of $\cI$ by pulling it back to a sequence of normalized root blowings up of $Y$ and removing all blowings ups with empty centers.
\end{theor}

This implies the embedded dream resolution, see Theorem~\ref{embeddedth}, but our second major result is the more general non-embedded version, which applies to arbitrary formally-equidimensional excellent schemes. We refer to \S\ref{dreamressec} for precise definitions, but loosely speaking we introduce certain nilpotent thickenings of regular subschemes, called tubes, and associate to them appropriate Rees alebras and (stack-theoretic) blowings up. Then the dream resolution at each step just finds a tube $V$ of maximal width and blows up an appropriate Rees algebra $\cR_{\sqrt[N]{V}}$, where $N$ is chosen analogously to the principalization.

\begin{theor}\label{resolutionth}
Let $Z$ be a noetherian locally equidimensional reduced excellent stack of characteristic zero, then:

(i) The dream resolution sequence of $Z$ is finite and ends with a regular $Z_n$.

(ii) The sequence $Z_n\to\dots\to Z_0=Z$ is trivial over the regular locus of $Z$.

(iii) The induced sequence $Z_n\to\dots\to Z_0=Z$ is a resolution of $Z$ by a sequence of stack-theoretic modifications.

(iv) The sequence is functorial with respect to any regular morphism $f\:X'\to X$: the dream non-embedded resolution of $Z'=Z\times_XX'$ in $X'$ is obtained from the dream non-embedded resolution of $Z$ in $X$ by pulling it back to a sequence of normalized root blowings up of $X'$ and removing all blowings ups with empty centers.
\end{theor}

Loosely speaking, in both cases we show that the formal centers produced by the algorithms from \cite{dream_derivations} glue together to global centers under the excellence assumption, and blowing them up provides the desired algorithms in general.

\subsubsection{Comparison to the classical methods}
So far, an extension of classical resolution algorithms to arbitrary qe schemes (without derivations) was always done using Hironaka's localization or induction on codimension, e.g. see \cite[\S4]{Temkin-qe} and \cite[\S3.4]{Temkin-embedded}. This method modifies the algorithm via an induction on codimension procedure when descending the method from formal completions to arbitrary qe schemes. For functoriality reasons, this must be done even in the case of varieties, and one obtains a much more complicated (and long) method, than the method one starts with.

It is anticipated by some experts that the original algorithms and invariants (without localization) can be extended to the general qe case, but this is still an open question. The main problem is that large blocks of the algorithm run on a fixed maximal contact (and its transforms), which is not canonical and it is not clear why any of the maximal contacts should descend from the formal level. Descent of the first center (sometimes called the year zero center) works fine, as follows from our results, but this is not enough to descend the whole classical algorithm.

The main novelty in the dream algorithms is that they do not use history, so it is enough to establish just the following two steps:
\begin{itemize}
\item[(i)] Algebraize a single formal center, establishing a descent of the method from local formal schemes to local qe schemes.
\item[(ii)] Spread out a local center to a neighborhood in a qe scheme.
\end{itemize}
In this paper we develop techniques to do both tasks. It might be the case, that these techniques can also be used to extend the classical algorithm to qe schemes, but this is certainly not straightforward, would probably require additional ideas and is not studied in the paper.

\subsubsection{Comparison to the method of McQuillan}
The only basic algorithms which were extended to arbitrary qe schemes unchanged are the dream principalization and non-embedded resolution of McQuillan, see \cite{McQuillan}. Of course, these are the same algorithms as were independently discovered in \cite{ATW-weighted}, though the descriptions and proofs are very different. Thus, to some extent, our main results reprove those of McQuillan. Note that the formal descent argument in \cite{McQuillan} is incomplete, and an erratum was added later. The argument in the erratum is quite analogous to the one we use in the algebraization step.

Of course, there are some other similarities of the methods, and it seems that Tschirnhauss coordinates introduced in this paper as an alternative to the more standard iterative maximal contact coordinates are close in spirit to the method of weighted filtrations of McQuillan. While the maximal contact coordinates, when exist, provide quite short and elegant arguments (e.g. as worked out in \cite{dream_derivations}), McQuillan's method of weighted filtrations or the method of Tschirnhauss coordinates seem advantageous when extending the results to the generality of arbitrary qe schemes without enough derivations. It also seems that Tschirnhauss coordinates, which are used in this paper to spread out canonical centers can also be used to provide an explicit description of weighted filtrations.

Finally, working out resolution in the situation when embedding into a regular scheme exists only formally locally required us to introduce a new technique of tubes, which somewhat refines weighted centers. In doing so we had to provide a precise description of the set $\Mord$ of multiorders (or invariants) an ideal may have.

\subsection{The methods}
It was a surprise for me that comparing to \cite{dream_derivations} the current paper required to introduce quite a few new ideas and techniques. For this reason this section is written in a slightly unusual format: not only do I outline the methods, but also try to explain which problems they allow to bypass, and why more naive and natural attempts are insufficient -- at least, I did not find a way to get a simple proof along those lines.

\subsubsection{Descent and spreading out}
The basic strategy is very simple: formally locally there are enough derivations, so the dream principalization and resolution on the formal level were constructed in \cite{dream_derivations}. The formal centers are canonical, so one should prove that they come from local centers and expand the local centers to canonical centers in some neighborhoods. Then combining these two steps into a single proof is done by a standard noetherian induction. Referring to Section~\ref{GNsec}, the algebraization (or formal descent) must use the G-property and the extension must use the N-property. Surprisingly, both steps are far from being straightforward as we explain below. 

\subsubsection{Algebraization}
Until \S\ref{introtubes} we discuss principalization of an ideal $\cI$ on a regular excellent scheme $X$. The noetherian property, hence regularity, is not preserved by base changes: $\hatcO_x\otimes_{\cO_x}\hatcO_x$ is usually non-noetherian. Therefore usual flat descent and functoriality of the algorithm with respect to regular morphisms cannot be used. Thus, we have to show by hand that the canonical center $\hatcJ_x$ on $\hatX_x$ descends to $X_x$, see Lemma~\ref{formaldescent}: by noetherian induction we can assume that the restriction of $\cI$ onto the punctured localization $U=X_x\setminus \{x\}$ possesses a canonical center $\cJ_U$ whose pullback to $\hatU=\hatX_x\setminus\{x\}$ is the canonical center $\hatcJ_U$ of the pullback $\cI\cO_\hatU$, and the task is to prove that the canonical center $\hatcJ_x$ of $\hatcI_x$ comes from a center $\cJ_x$ on $X_x$, which is then easily seen to be the canonical center of $\cI_x$. Raising to a power one reduces to the case when $\hatcJ_x$ is an actual ideal. If $V(\hatcJ_x)=\{x\}$, then the center is an open ideal and hence comes from an ideal on $X_x$. Otherwise, $\hatcJ_x=\hatcJ_U\cap\hatcO_x$ and we simply set $\cJ_x=\cJ_U\cap\cO_x$. In geometrical terms, the center is the schematic closure of its restriction onto $U$. In this way we have already succeeded to descend the formal center to a local ideal $\cJ_x$, but it remains to show that the latter is a center. Surprisingly, this step is the most subtle one. It is based on Lemma~\ref{functorcenter} and uses that by the G-property the completion $\cO_x\to\hatcO_x$ is regular.

\subsubsection{Spreading out}
The local canonical center $\cJ_x$ constructed by formal descent extends to a center $\cJ_U$ on neighborhood $U$ of $x$, yielding after shrinking $U$ a candidate for being the canonical center of $\cI$ in a neighborhood of $x$. However, it is not easy to check that $\cJ_U$ is the canonical center of $\cI_U$ at a general enough point $y\in U$ which specializes $x$. It is tempting to pass to $\hatX_y$ and use the theory of derivations there, but the problem is that there are no derivations on $X$, so derivations on $\hatX_x$ and $\hatX_y$ are completely unrelated. Alternatively, one could attempt to show that the multiorder of $\cI$ is upper semicontinuous, and hence is the same at $x$ and any of its specializations once we shrink $U$ enough. Using the N-property it is easy to see that the {\em order} is upper semicontinuous, but moving this further involves induction with maximal contacts and coefficients ideals, and it is unclear how to relate formal maximal contacts and coefficients ideals at $x$ and its specializations in a small neighborhood.

The only way we could find to bypass these obstacles is to give up on the tools available formally locally -- derivations and resulting maximal contacts and coefficients ideals, and encode the fact that $\cJ_x$ is the canonical center of $\cI_x$ in terms of parameters we call {\em Tschirnhaus coordinates}, see \S\ref{Tsec}. Loosely speaking, these are approximations of iterated maximal contacts up to $\cJ^{1+\veps}$. The property is expressed in elementary terms and does not involve derivations. In particular, being Tschirnhauss is an open condition, so the extension is on the nose. In addition, given a local canonical center $\cJ_x=\cJ(\cI_x)$ it is easy to construct such coordinates at $x$. So, they do the main job in our method.

\begin{rem}
One may wonder if, moreover, using Tshicrnhaus coordinates one can reprove the results of \cite{dream_derivations} in a simpler way and in the generality of arbitrary excellent regular schemes. I do not expect that this is the case because one should somehow exploit the G-property, and it is not encoded in such elementary terms. It seems that Tschirnhaus coordinates are good to detect the canonicity once the center is already provided by other methods, but it is difficult to construct them and the center from scratch. This is illustrated by Example~\ref{nocanonical}, in which $X$ is a non-G local scheme and no canonical center exists, but one can construct an infinite sequence of approximations of the formal canonical center. In fact, even to prove that Tschirnhaus coordinates detect canonicity we prefer to pass to formal completion and use the theory of centers and maximal contacts from \cite{dream_derivations} -- at least, this seems to be the fastest route, given the results of \cite{dream_derivations}.
\end{rem}

\subsubsection{Tubes}\label{introtubes}
In the case of the resolution algorithm formal descent works quite similarly, but there is one more complication: the resolution algorithm is independent of an embedding into a regular scheme, but it uses such an embedding as an input. In general, such embeddings exist only after formal completion, and this causes an additional difficulty with spreading out -- we can extend local blowing up at $x$ to a neighborhood, but it is unclear why for any specialization $y\prec x$ in a small enough neighborhood it should coincide with the blow up induced from the formal completion at $y$. It seems that, once again, one needs to provide a constructible local object at $x$ which controls the blowing up and detects its canonicity. We solve this by encoding resolution in terms of the scheme $Z$ we want to resolve only, and without any referencing to an embedding $Z\into X$ into a regular scheme. Moreover, we have to develop a technique which applies to a non-normal $Z$ because even if we start with a normal one, the dream algorithm can produce non-normal schemes in the process of improving the multiorder via taking the strict transforms of $Z$ in blowings up of $X$.

This forces us to switch from $\cI$-admissible centers $\cJ$ on $X$ to their roundings $(\cJ)$ (see \cite[\S2.2.1]{dream_derivations}) which correspond to usual closed subschemes of $Z=V_X(\cI)$. These closed subschemes are characterized as certain nilpotent thickenings of regular schemes and are called {\em tubes} in the paper. We expect that this notion will be useful beyond the goals of the paper. To introduce tubes we precisely classify centers that show up as canonical centers of their roundings and describe the set $\Mord$ of invariants $(d_1\.d_n)$ that can show up in Hironaka's (and all subsequent) algorithms. The correct condition is that for any $i\le n$ there exists $a_1\.a_i\in\NN$ such that $\frac{a_1}{d_1}+\dots+\frac{a_i}{d_i}=1$, and it provides a much better control on the denominators of $d_1\.d_n$ than one classically gets.

Using the language of tubes we can extend the embedded resolution theory to the non-embedded situation: any formally equidimensional qe (hence automatically excellent) scheme $Z$ possesses a unique canonical tube of maximal invariant and blowing up an explicit Rees algebra associated with it one decreases the invariant, see Theorem~\ref{canonicaltube}. The non-embedded resolution follows, see Theorem~\ref{resolutionth}. Once tubes have been introduced, the arguments are quite similar to the case of principalization discussed earlier -- we use algebraization and Tschirnhaus coordinates to spread out, and in the formal situation we just embed into a regular formal variety reducing to the already established case of canonical centers. In fact, we use this method of reducing claims to the case with enough derivations even to study basic properties of tubes, such as invariants and coordinate changes.

\subsubsection{Overview of the paper}
Section \ref{princsec} extends principalization to excellent schemes. We start with studying the precise set $\Mord$ of multirders of canonical centers in \S\ref{Mordsec}. The centers whose multiorders lie in $\Mord$ are called {\em integral} in the paper, and we prove in Theorem~\ref{maxcenterth} that these are the centers which are the canonical centers of their roundings. Next, we recall the definition of qe schemes and show on examples how existence of the canonical center fails without the G and N conditions. Then we study functoriality of roundings and centers in \S\ref{algsec} and use these results to establish algebraization of canonical centers. Spreading out is done in \S\ref{extcanonicalsec} and is based on introducing Tschirnhaus coordinates and comparing them to maximal contacts on the formal level, see Theorem~\ref{tschirnhausexist}. Finally, everything done in \S\ref{princsec} is combined together in Theorem~\ref{canonicalcenter} stating that each ideal on a regular excellent scheme possesses a canonical stratification and this stratification is compatible with arbitrary regular morphisms. The principalization and embedded resolution follow easily, see Theorems~\ref{principalizationth} and \ref{embeddedth}.

The non-embedded resolution is established in \S\ref{nonembsec}. First, we introduce abstract tubes in \S\ref{tubesec} and study their basic properties in \S\ref{pressec}. In particular, we show that tubes in a regular scheme correspond to integral centers, and prove that being a tube descends with respect to regular morphisms, see Theorem~\ref{tubedescent}. This result is surprisingly delicate and (at least) to simplify the argument we use Popescu's theorem in its proof. After that we define canonical tubes analogously to canonical centers and the rest of \S\ref{nonembsec} is devoted to proving that canonical tubes exist, see Theorem~\ref{canonicaltube}. The non-embedded resolution follows immediately, see Theorem~\ref{resolutionth}. The strategy of constructing canonical tubes is similar to the proof of the principalization theorem: first we establish the formal case using embedding into a regular formal variety and existence of canonical centers, then we  show that a formal canonical tube descends to a canonical local tube using Theorem~\ref{tubedescent}, and finally we spread out a local canonical tube using a tubular version of Tschirnhaus coordinates.

\subsection{Conventiones}
We keep conventiones from \cite[\S1.3]{dream_derivations} and use notation introduced in the cited paper. In particular, all schemes (and stacks) are assumed to be noetherian of characteristic zero and we only mention this assumption in the formulations of the main results, underlines are used to denote tuples, and formal completions of schemes and ideals at a point are denoted $\hatX_x=\Spec(\hatcO_x)$ and $\hatcI_x=\cI\hatcO_x$. Similarly, localizations are denoted $\cI_x=\cI\cO_X$ and restrictions onto an open $U\subseteq X$ are denoted $\cI_U=\cI|_U$, $\cJ_U=\cJ|_U$, etc.

The paper is a direct continuation of \cite{dream_derivations}, so we will freely use the notions of $\QQ$-ideals, roundings, centers and canonical centers of ideals defined in \cite[\S2.2.1, \S2.3.1 and \S3.3.1]{dream_derivations}. Finally, we will often have to work with the weight vectors $\uw=(w_1\.w_n)$, where $w_i=\frac{1}{d_i}$, so we will use the slightly unusual notation $\uw=\ud^{-1}$.

\section{Principalization on excellent schemes}\label{princsec}
The main result of \cite{dream_derivations} is the principalization theorem on schemes with enough derivations, including the case where $X=\Spec(k\llbracket t_1\.t_n\rrbracket)$ is the spectrum of a regular complete local ring. In this section we will deduce from it the most general case -- the case of excellent schemes.

\subsection{Integral centers}\label{Mordsec}
As in \cite{dream_derivations} let $\cQ$ denote the set of increasing finite sequences of positive rational numbers and let $\cQ_1$ be the subset whose elements $\ud=(d_1\.d_n)$ are such that the numbers $e_1\.e_n$ defined by the rule $e_i=d_i\prod_{j=1}^{i-1}(e_j-1)!$ are integral. The multiorder of a weighted center $\cJ$ lies in $\cQ$. However, if $\cJ$ is the canonical center of an ideal, then $\ud\in\cQ_1$ due to the fact that it is obtained from the sequence $\ue$ of orders of iterated maximal contacts by a renormalization procedure. The resolution algorithm stops because the set $\cQ_1$ is well-ordered. The integrality condition defining $\cQ_1$ originates in Hironaka's work and is natural for the approach of iterated maximal contacts. However, it is not sharp and, moreover, there exists a simple combinatorial description of the precise set $\Mord$ of multiorders of canonical centers.

\subsubsection{The set $\Mord$}\label{Mord}
Let $\Mord$ denote the set of all increasing tuples of positive rational numbers $\ud=(d_1\.d_n)$ such that for each $1\le i\le n$ there exists natural numbers $a_1\.a_i$ such that $\frac{a_1}{d_1}+\dots+\frac{a_i}{d_i}=1$ and $a_i\neq 0$. We say that a center $\cJ$ is {\em integral} if $\mord(\cJ)\in\Mord$.  In particular, we will prove that $\Mord$ is precisely the set of invariants of canonical centers.

\begin{rem}\label{remord}
We stress that the integrality condition is weaker than requiring that the $\QQ$-ideal $\cJ$ is an ideal (that is, $\ud\in\NN^n$). For instance, consider the centers $\cJ=[x^5,y^7]$ and $\cJ'=[x^5,y^{15/2}]$ (also discussed in \cite[\S7.1]{ATW-weighted} and in Example~\ref{mordexam}(ii) below). Both are integral, but $\cJ$ is an ideal, while $\cJ'$ is not: $\cJ$ is generated by its rounding $(\cJ)=(x^5,x^4y^2,x^3y^3,x^2y^5,xy^6,y^7)=[x^5,y^7]$, while $\cJ'$ is strictly larger than the $\QQ$-ideal generated by $(\cJ')=(x^5,x^4y^2,x^3y^3,x^2y^5,xy^6,y^8)=[x^5,xy^6,y^8]$.

	\begin{tikzpicture}[scale=0.5]
\draw[<->] (0,6)--(0,0)--(9,0);
\path[pattern=north west lines, pattern color=red!20!white,dashed]
	(0,5.8)--(0,5)--(7,0) -- (8.8,0)-- (8.8,5.8) -- (0,5.8);
\node at (0,6.4) {$ x $};
	\node at (9.4,0) {$ y $};
	
	\foreach \x in {1,...,8}
	\draw (\x,0.1) -- (\x,-0.1) node [below] {\x};
	
	\foreach \x in {1,...,5}
	\draw (0.1,\x) -- (-0.1,\x) node [left] {\x};
	
	\draw[very thick] (0,5.8)--(0,5)--(7,0) -- (8.8,0);

	\filldraw[black] (0,5) circle (2.5pt);
    \filldraw[red] (2,4) circle (2.5pt);
	\filldraw[red] (3,3) circle (2.5pt);
    \filldraw[red] (5,2) circle (2.5pt);
    \filldraw[red] (6,1) circle (2.5pt);
	\filldraw[black] (7,0) circle (2.5pt);
\hspace{200pt}

	\draw[<->] (0,6)--(0,0)--(9,0);
	
	\path[pattern=north west lines, pattern color=red!20!white,dashed]
	(0,5.8)--(0,5)--(3,3)--(6,1)--(8,0) -- (8.8,0)-- (8.8,5.8) -- (0,5.8);
	
	\node at (0,6.4) {$ x $};
	\node at (9.4,0) {$ y $};
	
	\foreach \x in {1,...,8}
	\draw (\x,0.1) -- (\x,-0.1) node [below] {\x};
	
	\foreach \x in {1,...,5}
	\draw (0.1,\x) -- (-0.1,\x) node [left] {\x};
	
	\draw[very thick] (0,5.8)--(0,5)--(6,1)--(7.5,0) -- (8.8,0);
    \draw[thick, red] (6,1)--(8,0);

	\filldraw[black] (0,5) circle (2.5pt);
    \filldraw[red] (2,4) circle (2.5pt);
	\filldraw[black] (3,3) circle (2.5pt);
	\filldraw[red] (5,2) circle (2.5pt);
    \filldraw[black] (6,1) circle (2.5pt);
	\filldraw[red] (8,0) circle (2.5pt);
	\end{tikzpicture}

The black lines are given by the conditions $\nu_{\cJ}=1$ and $\nu_{\cJ'}=1$, respectively, the red points satisfy the conditions $\nu_{\cJ}>1$ and $\nu_{\cJ'}>1$, respectively, and the red areas are the convex hulls of the sets of the lattice points on or above the black lines. The areas above the black lines correspond to the centers, while the red areas correspond to the centers generated by their roundings.
\end{rem}

\begin{rem}
Geometrically the meaning of the condition $\ud\in\Mord$ is as follows: consider the $n-1$-dimensional simplex $\sigma_n$ cut out from $\RR^n_{\ge 0}$ by the condition $\uw\cdot\ux=1$, and let $\sigma_i$ be the face given by the vanishing of $x_{i+1}\.x_n$. Then each set $\sigma_{i+1}\setminus\sigma_i$ contains an integral point.

\begin{tikzpicture}[scale=0.6]
	
	\draw[->] (0,0,0)--(7.5,0,0);
	\node at (7.9,0,0) {$ z $};
	
	\draw[<->] (0,4.5,0)-- (0,0,0)--(0,0,4.5);
	\node at (0,4.9,0) {$ y $};
	\node at (0,0,5.1) {$ x $};

	\draw[very thick] (6,0,0) -- (0,4,0) --  (0,0,3) -- (6,0,0);

    \path[pattern=north west lines, pattern color=black!20!white, dashed]
	(6,0,0) -- (0,4,0) --  (0,0,3) -- (6,0,0);

    \filldraw[black] (0,0,3) circle (2.5pt);
    \filldraw[black] (0,3,0.75) circle (2.5pt);
    \filldraw[black] (2.5,1.5,0) circle (2.5pt);

\end{tikzpicture}
\end{rem}

\begin{exam}\label{mordexam0}
The tuple $(4,\frac{16}{3},\frac{32}{5})$ lies in $\Mord$ because the equation $\frac{1}{4}x+\frac{3}{16}y+\frac{5}{32}z=1$ is satisfied at the points $(4,0,0)$, $(1,4,0)$ and $(2,1,2)$. In fact, in this case there is one more solution $(0,2,4)$. Thus, the center $\cJ=[x^4,y^{16/3},z^{32/5}]$ is integral. Using the consecutive maximal contacts $x,y,z$ one can easily check that $\cJ$ is the canonical center of $\cI=(x^4,xy^4,x^2yz^2)$.
\end{exam}

\subsubsection{The ideals $I_\ud$ and combinatorics of $\Mord$}
The following lemma is an elaboration of the observation in the above remark, and it provides the simplest motivation for introducing the set $\Mord$. From now on given a tuple $\ud\in\NN^n$ by $I_\ud\subset\NN^n$ we denote the ideal (in the sense of monoids) formed by all elements $\ua$ such that $\ua\cdot\ud^{-1}=\sum_{i=1}^n\frac{a_i}{d_i}\ge 1$.

\begin{lem}\label{comblem}
Let $\ud=(d_1\.d_n)\in\cQ$. Then $\ud\notin\Mord$ if and only if there exists $(d'_1\.d'_n)=\ud'>\ud$ such that $I_\ud\subseteq I_{\ud'}$.
\end{lem}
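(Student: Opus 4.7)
The plan is to prove the two implications separately, interpreting $\ud'>\ud$ as the natural lexicographic ordering on $\cQ$; thus $\ud'>\ud$ determines a smallest index $i$ with $d'_j=d_j$ for $j<i$ and $d'_i>d_i$, while the higher coordinates of $\ud'$ may be larger or smaller than those of $\ud$.

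For the easy direction ($\Leftarrow$), suppose $\ud'>\ud$ with $I_\ud\subseteq I_{\ud'}$ and let $i$ be the first index of difference. I would derive a contradiction from the assumption $\ud\in\Mord$ by invoking the defining condition at level $i$ to produce $\ua=(a_1,\ldots,a_i,0,\ldots,0)\in\NN^n$ with $a_i>0$ and $\ua\cdot\ud^{-1}=1$. Then $\ua\in I_\ud$, but the contributions for $j<i$ coincide while $a_i/d'_i<a_i/d_i$ forces $\ua\cdot(\ud')^{-1}<1$, contradicting $\ua\in I_{\ud'}$.

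For the hard direction ($\Rightarrow$), assume $\ud\notin\Mord$ and let $i$ be the smallest index for which the $\Mord$-condition fails: no tuple supported in $\{1,\ldots,i\}$ with positive last coordinate realizes $\ua\cdot\ud^{-1}=1$. I would construct $\ud'$ by perturbing only coordinates of index $\ge i$, setting $d'_j=d_j$ for $j<i$, $d'_i=d_i+s$, and $d'_j=d_j-u$ for $j>i$, with small positive $s,u$ satisfying $s+u<d_{i+1}-d_i$ so that $\ud'$ remains strictly increasing; then $\ud'>\ud$ lexicographically by construction. The real content is to verify that for every $\ua\in I_\ud$,
\[
\ua\cdot(\ud')^{-1}=\ua\cdot\ud^{-1}-\frac{a_is}{d_i(d_i+s)}+\sum_{j>i}\frac{a_ju}{d_j(d_j-u)}\ge 1.
\]
I would split into three cases: if $a_i=0$, the perturbation only increases the left side; if $a_i>0$ and some $a_{j_0}>0$ for $j_0>i$, the compensating sum is bounded below by a positive multiple of $u$ and dominates the decrement once $u/s$ is taken large enough; if $a_i>0$ but $a_j=0$ for all $j>i$, there is no compensation, but by the choice of $i$ such an $\ua$ cannot satisfy $\ua\cdot\ud^{-1}=1$, so it satisfies $\ua\cdot\ud^{-1}>1$ strictly.

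The main obstacle will be uniform control over infinitely many $\ua$ in this last case (and in the non-boundary subcase of the preceding one): a priori the values $\ua\cdot\ud^{-1}$ could accumulate at $1$ from above, ruining any fixed choice of $s$. The key observation is the arithmetic gap coming from rationality of $\ud$: the set $\{\ua\cdot\ud^{-1}:\ua\in\NN^n\}$ lies in $(1/M)\NN$, where $M$ is the least common multiple of the numerators of the $d_j$ in lowest form, so $\ua\cdot\ud^{-1}>1$ actually forces $\ua\cdot\ud^{-1}\ge 1+1/M$. Combined with the crude bound $a_i\le d_i(\ua\cdot\ud^{-1})$, this should yield an explicit upper bound on $s$ of order $d_i/M$ below which the estimate works uniformly, after which $u$ can be chosen as a fixed multiple of $s$ small enough to respect $s+u<d_{i+1}-d_i$.
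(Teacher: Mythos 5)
Your easy direction coincides with the paper's, and your quantitative scheme for the hard direction (push $d_i$ up, the later coordinates down, and get uniformity over all $\ua\in I_\ud$ from the arithmetic gap $\ua\cdot\ud^{-1}\in\frac{1}{M}\NN$, so that values $>1$ are $\ge 1+\frac{1}{M}$) is a legitimate alternative to the paper's softer route, which instead replaces all $d_j$ with $j\ge i$ by one common value $d_i+\veps$ and reduces the verification to finitely many lattice points. However, there is a genuine gap: elements of $\cQ$ are only non-decreasing (multiorders such as $(2,2)$ and $(1\.1,d_m\.d_n)$ occur throughout the paper), and your construction breaks down exactly when the failing index $i$ is tied with later ones, i.e.\ $d_{i+1}=d_i$. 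Then your constraint $s+u<d_{i+1}-d_i$ admits no positive $s,u$, so the perturbed tuple does not exist; and taking $u=0$ on the tied coordinates does not help, since an $\ua$ with $\ua\cdot\ud^{-1}=1$, $a_i>0$ and remaining support on coordinates of the same weight as $d_i$ would drop strictly below $1$, destroying $I_\ud\subseteq I_{\ud'}$. This situation really occurs at the smallest failing index: for $\ud=\bigl(2,\frac{7}{2},\frac{7}{2}\bigr)$ the $\Mord$-condition holds at $i=1$ and fails at $i=2$, while $d_3=d_2$.

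The missing idea is the paper's reduction step: if the condition fails at $i$, it also fails at the maximal $j$ with $d_j=d_i$ (a witness at $j$ is converted into one at $i$ by shifting exponents between coordinates of equal weight), so one may assume $i=n$ or $d_{i+1}>d_i$; equivalently, all coordinates tied with $d_i$ must be raised simultaneously, as the paper does by setting $d'_j=d_i+\veps$ for every $j\ge i$. With that modification (and the corresponding adjustment of your case analysis, where the decrement now involves the sum of $a_j$ over the tied block), your estimates go through. A smaller point: in your second case the assertion that a fixed ratio $u/s$ makes the compensation dominate is not uniform in $a_i$ by itself; when $a_i$ is large you must again invoke $a_i\le d_i\,\ua\cdot\ud^{-1}$ together with the gap bound, as you do indicate for the other case in your final paragraph, so this is a matter of assembling the estimate rather than a missing ingredient.
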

\begin{proof}
Assume first that $\ud\in\Mord$ and $\ud'>\ud$. Choose the minimal $i$ such that $d'_i>d_i$ and let $\ua=(a_1\.a_i,0\.0)\in\NN^n$ be such that $\ua\cdot\ud^{-1}=1$ and $a_i>0$. Then $\ua\cdot\ud'^{-1}<1$ and hence $\ua\in I_\ud\setminus I_{\ud'}$.

Conversely, assume that $\ud\notin\Mord$, and let us find $\ud'>\ud$ such that $I_\ud\subseteq I_{\ud'}$. Choose $i$ such that there exists no $\ua=(a_1\.a_i,0\.0)\in\NN^n$ with $a_i>0$ such that $\ua\cdot\ud^{-1}=1$. Switching $i$ with the maximal $j$ such that $d_i=d_j$ we can assume that either $i=n$ or $d_{i+1}>d_i$. For a rational $\veps\ge 0$ we define $\ud'=\ud_\veps$ by $d'_j=d_j$ if $j<i$ and $d'_j=d_i+\veps$ if $j\ge i$. Thus $\ud_\veps=(d_1\.d_{i-1},d_i+\veps\.d_i+\veps)$ and $\ud_\veps>\ud$ (lexicographically) whenever $\veps>0$. We will show that one can take $\ud'=\ud_\veps$ for a small enough positive $\veps$.

The main idea is now illustrated in the figures below on the case when $i=1$, $\ud=(\frac{14}{5},\frac{7}{2})\notin\Mord$, $\ud_\veps=(\frac{14}{5}+\veps,\frac{14}{5}+\veps)$ and $\ud_0=(\frac{14}{5},\frac{14}{5})$ correspond to the black, the blue and the dotted blue lines, and the red area on both figures is the convex hull of $I_\ud$.

\begin{tikzpicture}[scale=0.7]

\draw[<->] (0,4.3)--(0,0)--(5.3,0);

\node at (0,4.6) {$ x $};
	\node at (5.6,0) {$ y $};
	\node[blue] at (-1.5,2.8) {$ 14/5$};
	\foreach \x in {1,...,5}
	\draw (\x,0.1) -- (\x,-0.1) node [below] {\x};
	
	\foreach \x in {1,...,4}
	\draw (0.1,\x) -- (-0.1,\x) node [left] {\x};
		
\draw[thick,blue, dotted] (0,2.8)-- (2.8,0);
	\draw[thick] (0,2.8) -- (3.5,0);
    \draw[thick, red] (0,3) -- (1,2) -- (4,0);
\path[pattern=north west lines, pattern color=red!20!white,dashed]
	(0,4)--(0,3)--(1,2)--(4,0)--(5,0) -- (5,4)-- (0,4);

	\filldraw[black] (1,2) circle (2.5pt);
\filldraw[red] (0,3) circle (2.5pt);
\filldraw[red] (4,0) circle (2.5pt);

\hspace{200pt}

\draw[<->] (0,4.3)--(0,0)--(5.3,0);

\node at (0,4.6) {$ x $};
	\node at (5.6,0) {$ y $};
	
	\foreach \x in {1,...,5}
	\draw (\x,0.1) -- (\x,-0.1) node [below] {\x};
	
	\foreach \x in {1,...,4}
	\draw (0.1,\x) -- (-0.1,\x) node [left] {\x};
	\node[blue] at (-1.5,2.9) {$ 14/5+\epsilon \cdots$};
	
    \draw[thick, red] (0,3) -- (1,2) -- (4,0);
\path[pattern=north west lines, pattern color=red!20!white,dashed]
	(0,4)--(0,3)--(1,2)--(4,0)--(5,0) -- (5,4)-- (0,4);

	\filldraw[red] (1,2) circle (2.5pt);
\filldraw[red] (0,3) circle (2.5pt);
\filldraw[red] (4,0) circle (2.5pt);

	\draw[thick,blue] (0,2.8)-- (2.8,0);

	\end{tikzpicture}

First, we claim that for any single $\ua\in I_\ud$ one also has that $\ua\cdot\ud_\veps^{-1}\ge 1$ for a small $\veps>0$. Indeed, $$\ua\cdot\ud_\veps^{-1}-\ua\cdot\ud^{-1}=a_i\left(\frac{1}{d_i+\veps}-\frac{1}{d_i}\right)+\sum_{j=i+1}^na_j\left(\frac{1}{d_i+\veps}- \frac{1}{d_j}\right),$$ the first summand on the right tends to 0 as $\veps$ tends to 0, and the other summands tend to a non-negative number, which is positive whenever not all numbers $a_{i+1}\.a_n$ vanish. By our assumptions, it cannot happen that both $\ua\cdot\ud^{-1}=1$, $a_i\neq 0$ and $a_j=0$ for $i<j\le n$, and the claim follows. Second, if $\veps<C$, then $\ua\cdot\ud_\veps^{-1}\ge 1$ for any $\ua$ with $\min_i(a_i)\ge d_n+C$. Thus, it suffices to care only for finitely many points $\ua\in I_\ud$, and then the first claim implies that indeed $I_\ud\subseteq I_{\ud_\veps}$ for a small enough $\veps>0$.
\end{proof}

If $\cJ=[\ut^\ud]$ is a presentation of a center, then it follows from \cite[Lemma~2.3.5]{dream_derivations} that $(\cJ)=(\ut^{I_\ud})$ is generated by monomials $\ut^\ua$ with $\ua\in I_\ud$. In particular, we immediately obtain the following result.

\begin{cor}\label{roundlem1}
Let $X$ be a regular scheme and $\cJ=[\ut^\ud]$ a center on $X$. Then $\ud\notin\Mord$ if and only if there exists a tuple $\ud'>\ud$ of the same length such that $(\ut^{d})\subseteq(\ut^{\ud'})$.
\end{cor}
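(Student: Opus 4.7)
The plan is to deduce this directly from Lemma~\ref{comblem} by translating the combinatorial inclusion $I_\ud\subseteq I_{\ud'}$ into an inclusion of ordinary ideals in $\cO_X$, using the description $(\cJ)=(\ut^{I_\ud})$ of the rounding recalled from \cite[Lemma~2.3.5]{dream_derivations}. Thus the content of the corollary reduces to the following: for two centers $\cJ=[\ut^\ud]$ and $\cJ'=[\ut^{\ud'}]$ of the same length, the inclusion of roundings $(\cJ)\subseteq(\cJ')$ holds if and only if $I_\ud\subseteq I_{\ud'}$. Once this equivalence is in hand, Lemma~\ref{comblem} immediately yields both directions of the corollary.

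The ``if'' direction is automatic: if $I_\ud\subseteq I_{\ud'}$ then each monomial generator $\ut^\ua$ of $(\cJ)=(\ut^{I_\ud})$ has $\ua\in I_{\ud'}$, hence lies in $(\cJ')$. For the converse, fix an arbitrary $\ua\in I_\ud$; I need to show $\ua\in I_{\ud'}$. The hypothesis gives $\ut^\ua\in(\cJ)\subseteq(\cJ')=(\ut^{I_{\ud'}})$, and I would then invoke the standard fact that a monomial in regular parameters $\ut$ which belongs to a monomial ideal $(\ut^{\ub})_{\ub\in S}$ must be divisible by one of the generators, i.e.\ there exists $\ub\in I_{\ud'}$ with $\ub\le\ua$ coordinatewise. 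Since $I_{\ud'}$ is defined by $\ub\cdot\ud'^{-1}\ge 1$ with all entries of $\ud'^{-1}$ positive, it is upward closed in $\NN^n$, so $\ua\ge\ub\in I_{\ud'}$ forces $\ua\in I_{\ud'}$, as required.

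The main (minor) obstacle is the monomial-divisibility step in the converse direction. This is a standard fact about monomial ideals with respect to a partial regular system of parameters in a regular local ring; since the claim is local on $X$ and the generators involve only finitely many of the $\ut_j$, one reduces to a localization where $\ut$ forms part of a regular system of parameters, and the statement there follows from unique factorization in the associated graded (or by faithful flatness over the polynomial subring $k[\ut]$ under a coefficient-field/Cohen-structure argument). This being dispatched, the proof is just the two bullets above combined with Lemma~\ref{comblem}.
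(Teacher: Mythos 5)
Your proposal is correct and follows essentially the same route as the paper, which simply notes that $(\cJ)=(\ut^{I_\ud})$ by \cite[Lemma~2.3.5]{dream_derivations} and declares the corollary an immediate consequence of Lemma~\ref{comblem}. The only detail you add — that for monomials in a partial regular system of parameters, membership in a monomial ideal is detected combinatorially (via the associated graded or completion), so $(\ut^{I_\ud})\subseteq(\ut^{I_{\ud'}})$ forces $I_\ud\subseteq I_{\ud'}$ — is precisely the step the paper leaves implicit (it also follows from the valuative description of roundings, since $\nu_{[\ut^{\ud'}]}(\ut^\ua)=\ua\cdot\ud'^{-1}$), and your treatment of it is fine.
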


\subsubsection{The precise set of multiorders}\label{multisec}
Now let us prove that a center $\cJ$ is canonical for some ideal if and only if it is integral. Moreover, the question actually reduces to studying the relations between $\cJ$ and its rounding because if $\cJ$ is $\cI$-canonical (or just maximal), then the same is true for any intermediate ideal between $\cI$ and $\cJ$, and hence $\cJ$ is the canonical center of its rounding $(\cJ)$. In fact, using functoriality of the canonical centers with respect to completions and torus actions we will deduce this claim from its combinatorial (or toric) analogue proved in Lemma~\ref{comblem}.

\begin{theor}\label{maxcenterth}
The following conditions on a weighted center $\cJ$ on a regular scheme $X$ are equivalent:

(i) There exists an ideal such that $\cJ$ is its canonical center,

(ii) $\cJ$ is the canonical center of its rounding $\cI=(\cJ)$,

(iii) $\cJ$ is integral.

In particular, $\Mord$ is well-ordered and it is the precise subset of $\cQ_1$ in which the multiorders of ideals take values.
\end{theor}
\begin{proof}
We already observed that (i) and (ii) are equivalent in the obvious way, so let us prove that (ii)$\Longleftrightarrow$(iii). Assume first that $\ud=\mord(\cJ)\notin\Mord$. It suffices to prove that $\cJ$ is not the canonical center of $\cI$ locally, so we can assume that there exists a global presentation $\cJ=[\ut^\ud]$. By Lemma~\ref{comblem} there exists $\ud'>\ud$ with $\cJ'=[\ut^{\ud'}]$ such that $\cI\subseteq(\cJ')\subseteq\cJ'$, and hence $\mord(\cI)\ge\ud'>\ud$, as claimed.

Conversely, assume that $\ud\in\Mord$, but $\cJ$ is not the canonical center of $\cI=(\cJ)$, and let us obtain a contradiction. By definition, shrinking $X$ we can assume that $\cJ$ is not the maximal center of $\cI$. Thus, there exists an $\cI$-admissible center $\cJ'\neq\cJ$ such that $\mord(\cJ')\nless\ud$. Choose any $x\in V(\cJ)$ such that $\cJ'_x\neq\cJ_x$, then the same is true after pulling the situation back to $\hatX_x=\Spec(\hatcO_x)$, so we can assume that $X=\Spec(k\llbracket t_1\.t_m\rrbracket)$ and $\cJ=[\ut^\ud]$ is not the canonical center of its rounding $\cI$, where $\ut=(t_1\.t_n)$ is a partial family of parameters. At this stage one could use the derivations on $X$ and compute that $\ut$ is a maximal contacts flag and $\ud$ is the multiorder, but we will avoid the computations as follows. First, by the functoriality of roundings and canonical centers, the same claim holds already for $\Spec(k[t_1\.t_m])$ and the center $\cJ=[\ut^\ud]$ on it, so we can even assume that $X=\Spec(k[t_1\.t_m])$. Second, the center is equivariant for the action of the torus $T=\GG_m^m$, hence the same is true for its rounding $\cI=(\ut^{I_\ud})$ and for the canonical ideal $\cJ'=\cJ(\cI)$ of the rounding. Therefore, $\cJ'=[\ut^{\ud'}]$ for $\ud'>\ud$ and we obtain that $I_\ud\subseteq I_{\ud'}$, contradicting Lemma~\ref{comblem}.
\end{proof}

\begin{rem}
(i) Of course, the fact that the set $\Mord$ is contained in $\cQ_1$ and hence it is well ordered can be checked directly, though it is not completely trivial.

(ii) The actual bounds on the denominators in $\Mord$ are much better than in $\cQ_1$. For example, $d_1$ is natural and the denominator of $d_2$ is bounded by $d_1-1$ rather than just by a divisor of $(d_1-1)!$ Furthermore, this is also the precise set of the multiorder component of the invariant in the classical resolution. It is an interesting question if the theoretical bounds on the complexity of the classical resolution can be improved by taking the structure of $\Mord$ into account.
\end{rem}

\subsubsection{The set $\Mordone$}
Any $\ud=(d_1\.d_n)\in\Mord$ starts with a natural $d_1$, and by $\Mordone$ we denote the subset of $\Mord$ whose tuples start with $d_1\ge 2$. If $d_1=1$, then it does not affect all other integrality conditions in the definition of $\Mord$ and hence $(d_2\.d_n)\in\Mord$. Therefore, we obtain a splitting $\ud=(1\.1,d_m\.d_n)$ with $(d_m\.d_n)\in\Mordone$ and we will use the notation $(d_m\.d_n)=\ud_{>1}$.

\subsection{Quasi-excellent schemes}
A short survey about quasi-excellent schemes can be found in \cite[\S2.3]{Temkin-survey}. We recall here very briefly what is needed in this paper and include one more important example due to Nagata.

\subsubsection{The conditions of Grothendieck and Nagata}\label{GNsec}
Recall that a noetherian scheme $X$ is {\em quasi-excellent} or {\em qe} if it satisfies the conditions (N) and (G) below. If, in addition, $X$ is universally catenary, \cite[Tag 02J7]{stacks}, then it is called {\em excellent}.

\begin{itemize}
\item[(G)] Grothendieck's condition (which is essentially due to Hironaka): $X$ is a G-scheme if for any $x\in X$ the completion homomorphism $\cO_x\to\hatcO_x$ is a regular homomorphism.
\item[(N)] Nagata's condition: $X$ is an N-scheme if for any $Y$ of finite type over $X$ the regular locus of $Y$ is open.
\end{itemize}

Often (but not always), one can deal with the conditions separately, and this will (fortunately) be our case. The G-conditions will be used to prove things locally via algebraization (or descent) from the formal case, and the N-condition will be used to spread out results to neighborhoods of points. Now, let us consider the simplest examples of serious pathologies that can happen without the quasi-excellence assumption.

\subsubsection{Failure of the N-condition}\label{Nsec}
Nagata's condition can be severely violated already by curves. A comprehensive example by Gabber is worked out in detail in \cite{Laszlo}. Its idea is to take a countable base field $k$ with $\cha(k)\neq 2$ and infinite $k^\times/(k^\times)^2$, enumerate points $z_1,z_2,\dots$ on $C_0=\Spec(k[x])$ and construct (even explicitly) a family of finite covers $\dots\to C_2\to C_1\to C_0$ such that each $C_i$ is irreducible and nodal at all preimages of $z_i$, and for each closed point $z\in C_i$ in the preimage of $\{z_1\.z_i\}$ the morphism $f_i\:C_{i+1}\to C$ is \'etale with a single preimage over $z$ (i.e. $k(z)$ extends quadratically under $f_i$). It is then easy to see that the limit curve $C=\lim_i C_i$ is a notherian one-dimensional scheme with countably many closed points and all these points are nodes. Moreover, with a bit of care one can construct a closed embedding of the tower $C_i$ into a tower of finite covers $\dots S_2\to S_1\to S_0$ so that $S_i$ are surfaces essentially smooth over $k$ and $S_\infty=\lim_i S_i$ is a regular surface (in particular, noetherian). In fact, one can take $S_0$ to be the localization of $\Spec(k[x,y])$ along $C_0=V(y)$ and the map $g_i\:S_{i+1}\to S_i$ to be suitably ramified over the preimages of $z_{i+1}$ in $S_i$, \'etale over any other closed point, and with a single preimage over any point $z\in S_i$ in the preimage of $\{z_1\.z_i\}$. Of course, $S_\infty$ and its closed subscheme $C_\infty$ are not suited for resolution of singularities. For example, the normalization of $C_\infty$ is non-finite.

Now, let us show how this leads to the non-existence of canonical (or just maximal) centers. Let $\cI$ be the ideal defining $C_\infty$ in $S_\infty$, let $\eta$ be the generic point of $C_\infty$, let $C_\infty\setminus\{\eta\}=\{y_1,y_2,\.\}$ be the set of all closed points of $C_\infty$ and let $\cJ_i=m^2_{y_i}$. Then $\ord_\eta(\cI)=1$ and $\ord_{y_i}(\cI)=2$ for any $i$. In particular, $\ord_\cI$ is not upper semicontinuous. In addition, the localization at each $y_i$ is a local nodal curve and $\cJ_i$ is the canonical center of $\cI_{y_i}$. It follows that $\cJ'_n=\prod_{i=1}^n\cJ_i$ is a sequence of $\cI$-admissible centers with strictly increasing invariants -- the multiorder is constant and equals to $(2,2)$, while the support $V(\cJ'_n)=\{y_1\.y_n\}$ increases with $n$. There are no non-zero ideals contained in each $\cJ'_n$, hence there exists no maximal $\cI$-admissible center. In a sense, the maximal center should be the limit of $\cJ'_n$ and its support should be $\{y_1,y_2,\dots\}$.

\subsubsection{Failure of the G-condition}\label{Gsec}
A one-dimensional regular scheme which violates the G-condition, or just a non-excellent DVR, exists only in characteristic $p$. Here is a typical example: choose a field $k$ of characteristic $p$, let $y=f(t)\in k\llbracket t\rrbracket$ be transcendental over $k(t)$, and set $R=k(t,y^p)\cap k\llbracket t\rrbracket$. Then $R$ is a DVR whose completion $\hatR=k\llbracket t\rrbracket$ is generically inseparable over it. In characteristic zero there exist reduced one-dimensional local schemes whose completion is not reduced, but they are not normal. The smallest regular example of a non-G scheme in characteristic zero is constructed in dimension two using the same idea. Since it will serve us as an important test case later, let us briefly recall it here and refer to \cite[Appendix, Example~7]{Nagata} for details.

\begin{exam}\label{nonqeexam}
Choose a field $k$ of characteristic $0$ (or any $p\neq 2$) and an element $f(t)=\sum_{i=1}^\infty a_it^i\in k\llbracket t\rrbracket$ transcendental over $k(t)$. Set $y=(x-f(t))^2$ and $R=k(t,x,y)\cap k\llbracket t,x\rrbracket$. Nagata proved that $R$ is a regular local ring, $\hatR=k\llbracket t,x\rrbracket$ and the ideal $(y)$ is prime in $R$. Of course, $y$ is a square of a prime in $\hatR$, and hence the completion morphism $\Spec(\hatR)\to\Spec(R)$ has a non-reduced fiber. In addition, $R/(y)$ is a (non-normal) excellent local ring whose completion $\hatR/(y)$ is non-reduced.

We will also need a more concrete description of $R$. It is easy to see that $R=k[t,x,y,y_1,y_2,\dots]$, where $y_n=(y-(x-f_n)^2)/t^n$ are the rescaled approximations obtained from the truncations $f_n=\sum_{i=1}^{n-1} a_it^i$. Using this observation Nagata described the primes of $R$ by a straightforward computation, showed that the primes of height one are principal and $(x,t)$ is the only prime of height two, and deduced that $R$ is a regular local ring (in particular, noetherian).
\end{exam}

Finally, let us explain why the canonical center does not exist in this case.

\begin{exam}\label{nocanonical}
Consider the regular local surface $X=\Spec(R)$ and set $\cI=(y)$. Note that $\hatcI=(x-f)^2$ is a center on $\hatX=\Spec(\hatR)$ of invariant $(2)$, in particular, it is the canonical center of itself. On the other hand, $y$ is not a square in $R$, hence $\cI$ is not a center on $X$. The order of $\cI$ is 2, hence the maximal center, if exists, must be of multiorder $(2,d)$ with $d\ge 2$. Finally, there is an infinite sequence of $\cI$-admissible centers $\cJ_n=[(x-f_n)^2,t^{n}]$ of multiorder $(2,n)$, hence in this case the multiorder does not attain its maximum on the set of $\cI$-admissible centers. Of course, the reason why this happens is that $x-f$ is the only maximal contact to $\hatcI$, but it is not defined on $X$ and one can only take better and better approximations $x-f_n$ to $x-f$ leading to better and better approximations to the canonical center.
\end{exam}

\subsubsection{Failure of catenarity for qe schemes}\label{catenarysec}
A typical example of a qe non-catenary scheme is obtained as follows: take a field $k$ which admits an isomorphism $k\toisom k(t)$ (e.g. $k=\QQ(t_1,t_2,\dots)$), find an integral $k$-variety $X$ with a $k$-point $x$ and a non-closed point $y$ of codimension at least two such that $x$ is not in the closure of $y$ and there exists an isomorphism $\phi\:k(x)\toisom k(y)$ (e.g. $y$ is the generic point of a linear subspace of $\bbA^n$), localize $X$ at $\{x,y\}$, and glue $x$ and $y$ via $\phi$. Since the maximal specializing chains between the generic point $\eta$ and the points $x$ and $y$ are of different lengths, the obtained scheme $Z$ is non-catenary (and if $y$ is of codimension 1, the corresponding gluing $Z$ is catenary but not universally catenary). Note that $Z$ is not defined over $k$ or even any subfield $k_0\subseteq k$ such that $k/k_0$ is of finite transcendence degree.

In fact, this the only type of a failure that can happen in the qe case as we will now explain. By a theorem of Ratliff, see \cite{Ratliff} or \cite[tag~0AW1]{stacks}, a scheme is universally catenary if and only if it is formally catenary in the sense that the formal completion of any of its irreducible component at a point is equidimensional. In particular, this means that the existence of maximal prime ideal chains of different lengths between $p$ and $q$ indicates that the completion at $q$ contains irreducible components of different dimensions. In the case of G-schemes (non)-normality is preserved by completions, so any non-excellent qe scheme is non-normal, and, in fact, its normalization has branches of different dimensions over some point of $X$.

\begin{rem}\label{catenaryrem}
(i) One often uses the generality of qe schemes because this is the most general class which can possess a consistent theory of resolution of singularities, see \cite[${\rm IV}_2$, \S7.9.5]{ega}. This theorem of Grothendieck was the reason to introduce the class of qe schemes. However, up to normalization their resolution reduces to the case of excellent schemes.

(ii) In our main theorems \ref{principalizationth}, \ref{embeddedth} and \ref{resolutionth} we work with regular qe schemes when studying principalization and with formally equidimensional schemes when studying embedded resolution. In both cases, the scheme is automatically excellent, so we will use the word ``excellent'' instead of qe in formulations. Resolution of non-excellent qe schemes follows by using normalization, see Corollary~\ref{resolvecor}.

\end{rem}

\subsection{Algebraization}\label{algsec}
This section is devoted to descending canonical centers from formal completions to local excellent schemes. As one might expect, this involves a descent result with respect to regular morphisms, see Lemma~\ref{functorcenter}.

\subsubsection{Roundings}
We start with functoriality of roundings of $\QQ$-ideals, which essentially reduces to the fact that integral closures are compatible with normal morphisms, that is, flat morphisms with geometrically normal fibers.

\begin{lem}\label{functorrounding}
Any normal morphism of normal schemes $f\:X'\to X$ respects roundings of $\QQ$-ideals: if $\cJ$ is a $\QQ$-ideal on $X$ and $\cJ'=\cJ\cO_{X'}$, then $(\cJ')=(\cJ)\cO_{X'}$.
\end{lem}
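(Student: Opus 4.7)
The plan is to pass to a local affine setting where $\cJ$ admits a concrete presentation $\cJ=[\cI^{1/N}]$ for an ordinary ideal $\cI$ and an integer $N$, to describe the rounding explicitly in terms of the integral closure of $\cI$, and then to invoke the classical fact that integral closure of ideals is compatible with normal base change.

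Being local, I would pass to affine opens $X=\Spec R$ and $X'=\Spec R'$ and choose a presentation $\cJ=[\cI^{1/N}]$ for an ordinary ideal $\cI\subseteq R$ and $N\geq 1$, so that $\cJ'=[(\cI R')^{1/N}]$. Consistently with the toric computation of Remark~\ref{remord}, the rounding of such a $\QQ$-ideal admits the explicit description
\[ (\cJ) \;=\; \bigl\{\, f\in R \,\bigm|\, f^N\in\overline{\cI}\,\bigr\}, \]
where $\overline{\cI}$ denotes the integral closure of the ideal $\cI$, and the same formula describes $(\cJ')$ in $R'$.

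The inclusion $(\cJ)\cO_{X'}\subseteq(\cJ')$ is tautological: if $f\in(\cJ)$ then $f^N\in\overline{\cI}\subseteq\overline{\cI R'}$, so the image of $f$ lies in $(\cJ')$, and the latter is an $R'$-ideal. For the reverse inclusion, I would invoke the classical compatibility of integral closure of ideals with a flat base change having geometrically normal fibers, namely $\overline{\cI R'}=\overline{\cI}\cdot R'$. Combined with the analogous compatibility of the ``$N$-th root'' with such base change applied to the integrally closed ideal $J=\overline{\cI}$, that is $\sqrt[N]{JR'}=\sqrt[N]{J}\cdot R'$, one then obtains
\[ (\cJ') \;=\; \sqrt[N]{\overline{\cI R'}} \;=\; \sqrt[N]{\overline{\cI}\cdot R'} \;=\; \sqrt[N]{\overline{\cI}}\cdot R' \;=\; (\cJ)\cO_{X'}. \]

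The subtle point requiring care is the $N$-th root compatibility $\sqrt[N]{JR'}=\sqrt[N]{J}\cdot R'$. The cleanest route is valuative: an integrally closed ideal $J$ in a normal Noetherian ring is cut out by the inequalities $\nu(f)\geq\nu(J)$ over its finitely many Rees valuations $\nu$, so $\sqrt[N]{J}=\{f\mid \nu(f)\geq\nu(J)/N\}$. Since a normal morphism of normal schemes pulls back Rees valuations compatibly --- which is exactly what underpins the descent of integral closure itself --- the same system of inequalities cuts out both $\sqrt[N]{J}\cdot R'$ and $\sqrt[N]{JR'}$ inside $R'$, giving the required equality.
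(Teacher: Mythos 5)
Your reduction to the affine case, the presentation $\cJ=[\cI^{1/N}]$, and the description of the rounding as $(\cJ)=\{f\in R: f^N\in\overline{\cI}\}$ are all fine, as is the easy inclusion $(\cJ)\cO_{X'}\subseteq(\cJ')$ and the appeal to $\overline{\cI R'}=\overline{\cI}\,R'$ for a normal homomorphism. The gap is exactly at the step you flag as subtle, and your valuative justification of it is circular. Writing $J=\overline{\cI}$ and $K=\{f\in R: f^N\in J\}$, the set cut out in $R'$ by the inequalities $\nu'(g)\ge \nu'(JR')/N$ over the Rees valuations $\nu'$ of $JR'$ is, by the valuative criterion for integral closure, precisely $\{g\in R': g^N\in\overline{JR'}\}=(\cJ')$. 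So your assertion that ``the same system of inequalities cuts out $\sqrt[N]{J}\cdot R'$'' is literally the statement $(\cJ')=K R'$ that you are trying to prove; the containment $KR'\subseteq(\cJ')$ is the easy one you already have, and the reverse containment does not follow from compatibility of Rees valuations with base change (itself a nontrivial claim you do not prove). In particular it is not a formal consequence of $\overline{\cI R'}=\overline{\cI}R'$ that the fractional root $\{f:f^N\in J\}$ commutes with extension: one would still need to know that the ideal defined in $R'$ by the pulled-back valuation conditions is generated by its contraction-type description over $R$, which is a statement of the same nature as the lemma.

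To close the gap you need an extra input. One option is the route the paper takes, which sidesteps fractional roots entirely: locally $\cJ$ is generated by finitely many $[f_i^{1/n_i}]$, so after passing to a finite cover $Y\to X$ with normal irreducible source the pullback $\cJ_Y$ becomes an honest integrally closed ideal; then $(\cJ)=\cJ_Y\cap\cO_X$, the base change $Y'=Y\times_XX'\to X'$ is again normal so $Y'$ is normal and $\cJ_{Y'}=\cJ_Y\cO_{Y'}$ is integrally closed, whence $(\cJ')=\cJ_{Y'}\cap\cO_{X'}$, and flatness of $X'\to X$ lets one commute the intersection with pullback. Another option, closer to your setup, is to realize $K$ as the degree-one graded piece of the integral closure of the Veronese-type Rees algebra $R[Jt^N]$ inside $R[t]$ and invoke compatibility of integral closure of such algebras with normal base change. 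Either way, some device beyond the single identity $\overline{\cI R'}=\overline{\cI}R'$ is required, and as written your last paragraph asserts rather than proves the key inclusion.
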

\begin{proof}
The claim is local, so we can assume that $X$ is irreducible and there exists a finite cover $Y\to X$ with an irreducible normal source such that $\cJ_Y=\cJ\cO_Y$ is an actual integrally closed ideal. Indeed, locally $\cJ$ is generated by finitely many $\QQ$-ideals $[f_i^{1/n_i}]$ with $f_i\in\cO_X$ and passing to a finite cover we can assume that each $f_i^{1/n_i}$ also lies in $\cO_X$.

Once $\cJ_Y$ is an ideal, we have that $(\cJ)=\cJ_Y\cap\cO_X$ with the intersection taken in $\cO_Y$. Since the base change $f'\:Y'=Y\times_XX'\to X'$ is a normal morphism, we obtain that $Y'$ is normal and $\cJ_{Y'}=\cJ_Y\cO_{Y'}=\cJ_Y\otimes_{\cO_X}\cO_X'$ is integrally closed. Therefore, $(\cJ')=\cJ_{Y'}\cap\cO_{X'}$ in $\cO_{Y'}$ and since intersections of $\cO_{X'}$-submodules are compatible with pullbacks by the flatness of $f$, we obtain that $(\cJ')=(\cJ)\cO_{X'}$.
\end{proof}

Of course, the lemma fails for non-normal morphisms, including flat morphisms with non-reduced fibers. For example, it fails for $X=\Spec(k[t])$, $X'=\Spec(k[t^{1/2}])$ and $\cJ=[t^{1/2}]$.

\subsubsection{Descent of centers}
Using roundings we can now establish the following natural statement about functoriality of centers. The argument is a bit subtle and uses maximal contacts of centers, but I did not see a simper way to check this.

\begin{lem}\label{functorcenter}
Assume that $X'\to X$ is a regular morphism of regular schemes, $\cJ$ is a $\QQ$-ideal on $X$ and $\cJ'=\cJ\cO_{X'}$. If $V(\cJ)\subseteq f(X')$ and $\cJ'$ is a center, then $\cJ$ is a center and $\mord(\cJ)=\mord(\cJ')$.
\end{lem}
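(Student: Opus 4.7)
\emph{Plan.} The statement is local on $X$. Localize at $x\in V(\cJ)$ and, using $V(\cJ)\subseteq f(X')$, pick a preimage $x'\in X'$; localize $X'$ at $x'$. Set $R=\cO_{X,x}$ and $S=\cO_{X',x'}$; both are regular local rings and $R\to S$ is regular, hence faithfully flat. After shrinking $X'$, fix a presentation $\cJ'=[(t'_1)^{d_1}\.(t'_n)^{d_n}]$ for a partial regular system of parameters $(t'_i)$ of $S$. I proceed by induction on $n$, the case $n=0$ being immediate from faithful flatness. Lemma~\ref{functorrounding} gives $(\cJ)\cO_{X'}=(\cJ')$, so $(\cJ)$ is an honest ideal of $R$, and by flatness $\ord_x((\cJ))=\ord_{x'}((\cJ'))=d_1$.

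The crux is to produce a partial regular system of parameters $t_1\.t_n\in R$ whose $S$-images generate, up to units of $S$, the principal ideals $(t'_1)S\.(t'_n)S$. Once this is done, the $\QQ$-ideals $[t_1^{d_1}\.t_n^{d_n}]$ and $\cJ$ on $R$ have the same extension $\cJ'$ to $S$, so by faithful flatness they coincide, giving $\cJ=[t_1^{d_1}\.t_n^{d_n}]$ with $\mord(\cJ)=\ud=\mord(\cJ')$. To construct the leading-weight parameter $t_1$ (assuming $d_1<d_2$; the block case $d_1=\dots=d_k<d_{k+1}$ is parallel, descending a $k$-dimensional subspace rather than a single line), pick $g\in(\cJ)$ with $\ord_x(g)=d_1$. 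In $S$ we have $g\in(\cJ')=((\ut')^{I_\ud})$, and since the unique $\ua\in I_\ud$ with $|\ua|=d_1$ is $(d_1,0\.0)$, the initial form $\mathrm{in}_{x'}(g)\in\mathrm{Sym}^{d_1}(m_{x'}/m_{x'}^2)$ is a nonzero $k(x')^\times$-multiple of $(t'_1)^{d_1}$. Its preimage $\mathrm{in}_x(g)\in\mathrm{Sym}^{d_1}(m_x/m_x^2)$ becomes, under the injection $\mathrm{Sym}^{d_1}(m_x/m_x^2)\otimes_{k(x)}k(x')\hookrightarrow\mathrm{Sym}^{d_1}(m_{x'}/m_{x'}^2)$, a $d_1$-th power of a linear form; in characteristic $0$ the line spanned by that linear form in $m_{x'}/m_{x'}^2$ is $\mathrm{Gal}(\overline{k(x)}/k(x))$-stable, hence descends to a $k(x)$-rational line $\ell\subset m_x/m_x^2$. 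A lift $t_1\in m_x$ of any generator of $\ell$ is a regular parameter of $R$ whose image in $m_{x'}/m_{x'}^2$ is proportional to $\overline{t'_1}$; a standard perturbation (combining a modification of $t_1$ by an element of $m_x^2$ with a unit rescaling of $t'_1$, which does not affect $\cJ'$) yields $(t_1)S=(t'_1)S$.

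With $t_1$ in hand, pass to the regular local rings $\overline R=R/(t_1)$ and $\overline S=S/(t'_1)$, equipped with an induced regular morphism $\overline R\to\overline S$. The $\QQ$-ideal $\overline\cJ=\cJ\cdot\overline R$ extends to the length-$(n-1)$ center $[(\overline{t'_2})^{d_2}\.(\overline{t'_n})^{d_n}]$ on $\overline S$, and the inductive hypothesis produces regular parameters $\overline{t_2}\.\overline{t_n}\in\overline R$ with the desired property; lifting them to $R$ completes $t_1$ to the required partial regular system, finishing the proof. The main obstacle is the construction of $t_1$: one must descend a $d_1$-th-power structure across the (possibly nontrivial, always separable) residue field extension $k(x)\to k(x')$. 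This step relies essentially on characteristic zero---ensuring separability and that $\mu_{d_1}$ is \'etale, so that the projectivized $d_1$-th root descends via Galois cohomology---together with the rigid combinatorial shape of the rounding $(\cJ')=((\ut')^{I_\ud})$ of a center, which forces its degree-$d_1$ component at $x'$ to be $1$-dimensional and spanned by $(t'_1)^{d_1}$.
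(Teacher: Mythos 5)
Your plan hinges on the claim that one can find parameters $t_1\.t_n\in\cO_{X,x}$ with $(t_i)\cO_{X',x'}=(t'_i)\cO_{X',x'}$, i.e.\ on descending the \emph{given} presentation of $\cJ'$ up to units. This is where the argument breaks: such parameters need not exist, and the ``standard perturbation'' cannot produce them, because the required correction term lives in $S=\cO_{X',x'}$, not in $R=\cO_{X,x}$ (you are only allowed to modify $t_1$ by elements of $m_x^2\subset R$). Concretely, take $R=\QQ[x,y]_{(x,y)}$, $S=\widehat{R}=\QQ\llbracket x,y\rrbracket$ (a regular morphism since $R$ is excellent), $\cJ=[x^2,y^3]$, and the presentation of $\cJ'=\cJ S$ with $t'_1=x+y^2\exp(y)$, $t'_2=y$; this is a presentation because $\nu_{\cJ'}(y^2\exp(y))=\tfrac23>\tfrac12$, cf.\ \cite[Corollary~2.3.12]{dream_derivations}. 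If some $t_1\in R$ had $(t_1)S=(t'_1)S$, then clearing denominators a nonzero polynomial would vanish along the formal curve $x=-y^2\exp(y)$, forcing $\exp(y)$ to be algebraic over $\QQ(y)$. So the lemma's hypotheses hold, yet your crux step fails, and with it the induction, which passes to $R/(t_1)\to S/(t'_1)$ and is not even defined unless $t_1\in(t'_1)S$. Note also that what your construction actually yields --- a $t_1\in R$ whose image in $m_{x'}/m_{x'}^2$ is proportional to $\overline{t'_1}$ --- is too weak to use instead: a lift is only determined modulo $m_x^2$, and an error term such as $(t'_2)^2$ has $\nu_{\cJ'}$-value $2/d_2<1/d_1$ once $d_2>2d_1$, so $t_1$ need not be a maximal contact and you cannot rewrite the presentation of $\cJ'$ starting from it. (Secondary points: $k(x')/k(x)$ is usually transcendental, so Galois stability is not the right descent tool for the line, though the descent itself is true in characteristic zero; and in the block case $d_1=\dots=d_k$ a single element $g$ only produces one degree-$d_1$ form, not the $k$-dimensional subspace you need.)

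The paper avoids all of this by never trying to match the given parameters. After raising $\cJ$ to a power so that it is an honest ideal, it applies Lemma~\ref{functorrounding} to $\cJ^{1/d}$ and uses compatibility of the order with regular morphisms to see that the rounding $(\cJ^{1/d})$ has order one, hence contains a parameter $t_1\in\cO_x$. By construction $t_1^{d}\in\cJ'$, so this \emph{same} element $t_1$ is a maximal contact for $\cJ'$ upstairs; one then restricts to $H=V_X(t_1)$ and $H'=V_{X'}(t_1)$, applies the induction hypothesis to $\cJ|_H$, and rebuilds a presentation $\cJ=[\ut^\ud]$ from elements of $\cO_x$ using the maximal-contact results of \cite{dream_derivations} (Theorems~2.3.10 and 2.3.11). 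If you want to salvage your argument, replace the initial-form descent by exactly this move: descend a maximal contact through the rounding of $\cJ^{1/d_1}$ and change the presentation upstairs, rather than trying to descend the presentation you were given.
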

\begin{proof}
This can be checked locally at a point $x=f(x')\in V(\cJ)$, so assume that the schemes are local. Raising $\cJ$ to a power we can assume that it is generated by its rounding, in which case by our convention we identify $\cJ$ with the integrally closed ideal $(\cJ)$ and simply say that $\cJ$ is an ideal. By Lemma~\ref{functorrounding} $\cJ'$ is also an ideal, which is the ideal-theoretic pullback of $\cJ$.

Set $d=\ord_x(\cJ)$. Then $d=\ord_{x'}(\cJ')$ by \cite[Lemma~2.1.6(ii)]{dream_derivations}, and hence $\cJ'=[\ut^\ud]$ with $d=d_1$. We will use induction on the length $n$ of $\ud$ with the base $n=0$ being trivial. By Lemma~\ref{functorrounding}, $(\cJ^{1/d})\cO_{X'}=(\cJ'^{1/d})$, and since $\ord(\cJ'^{1/d})=1$ and the order is compatible with regular morphisms by \cite[Lemma~2.1.6(ii)]{dream_derivations}, we have that $\ord(\cJ^{1/d})=1$ and hence $(\cJ^{1/d})$ contains a parameter $t=t_1$.

Set $H=V_X(t)$ and $\cJ_H=\cJ|_H$. Since $t^d\in\cJ'$, the pullback $H'=V_{X'}(t)$ is a maximal contact to $\cJ'$ (see \cite[\S2.3.9]{dream_derivations}), and hence the pullback $\cJ'_{H'}=\cJ'|_{H'}$ of $\cJ_H$ is a center on $H'$ by \cite[Theorem~2.3.11(ii)]{dream_derivations}. Therefore, $\cJ_H$ is a center by the induction assumption. Choose $t_2\.t_n\in\cO_x$ such that their images in $\cO_H$ give rise to a presentation $\cJ_H=[t_2^{d'_2}\.t_n^{d'_n}]$ and hence also of its pullback $\cJ'_{H'}$. The latter implies that $d'_i=d_i$ for $2\le i\le n$ and $\cJ'=[t_1^{d_1}\.t_n^{d_n}]$ by \cite[Theorem~2.3.10(ii)]{dream_derivations}. Since $t_i\in\cO_X$ and $f$ is flat, this implies that also $\cJ=[\ut^\ud]$, and hence $\cJ$ is a center.
\end{proof}

\subsubsection{Formally canonical centers}\label{formcan}
Now let us discuss descent of canonicity. Given an ideal $\cI$ on a regular scheme $X$, we define the {\em formal multiorder} of $\cI$ at $x\in X$ by $\ford_x(\cI)=\mord(\hatcI_x)$. This makes sense as $\hatcO_x$ has enough derivations, hence $\hatcI_x$ possesses a canonical center by \cite[Theorem~3.3.14(i)]{dream_derivations}. One obtains a function $\ford_\cI:X\to\Mord$, though, like $\ord_\cI$ (which is its first component) it can have nasty properties in general, see \S\ref{Nsec}.

An $\cI$-admissible center $\cJ$ is called {\em formally canonical} if $V(\cJ)$ is the maximality locus of $\ford_\cI$ and for any $x\in V(\cJ)$ the $\hatcI_x$-admissible center $\hatcJ_x$ is the canonical one. We will later prove that for excellent schemes the notions of canonical and formally canonical centers (resp. multiorders and formal multiorders) are equivalent. At this stage we can only prove that the formal canonicity is stronger, so it will be used as an additional assumption while building the theory. In the same fashion, we will later establish functoriality of canonical centers for regular morphisms (which implies that canonical centers are formally canonical), but at this stage we only check functoriality of formal canonicity.

\begin{lem}\label{formalcenter}
Let $X$ be a regular scheme with an ideal $\cI$ and an $\cI$-admissible center $\cJ$, and let $f\:X'\to X$ be a regular morphism with pullbacks $\cI'=\cI\cO_{X'}$, $\cJ'=\cJ\cO_{X'}$.

(i) If $V(\cJ)\subset f(X')$ and $\cJ'$ is $\cI'$-canonical, then $\cJ$ is $\cI$-canonical.

(ii) If $\cJ$ is formally canonical, then it is canonical.

(iii) If $\cJ$ is the formally canonical center of $\cI$, then $\cJ'$ is the formally canonical center of $\cI'$, and the opposite implication holds whenever $V(\cJ)\subset f(X')$ .
\end{lem}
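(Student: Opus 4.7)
The proof plan is to exploit that canonicity of a center has two components: (a) the support $V(\cJ)$ equals the maximality locus of the multiorder function, and (b) locally at each $x\in V(\cJ)$, the center $\cJ_x$ is the maximal $\cI_x$-admissible center. Each will be tested by pulling back a candidate competitor center through an appropriate morphism. The key preservation facts I will use are: flat morphisms preserve admissibility; regular morphisms of regular schemes preserve centers and their multiorders via Lemma~\ref{functorcenter} (under the image hypothesis $V(\cJ)\subseteq f(X')$); and completion homomorphisms $\cO_x\to\hatcO_x$, even without the G-condition, are faithfully flat and send any presentation $\cJ_x=[\ut^{\ud}]$ in a regular system of parameters to the same presentation $\hatcJ_x=[\ut^{\ud}]$, so they preserve multiorders of centers.

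For (i), suppose a candidate $\cI$-admissible center $\cK$ satisfies $\mord(\cK_x)>\mord(\cJ_x)$ at some $x\in V(\cJ)$. Picking $x'\in f^{-1}(x)$, which is nonempty by hypothesis, the pullback $\cK\cO_{X'}$ is $\cI'$-admissible with multiorder $\mord(\cK_x)>\mord(\cJ'_{x'})$ at $x'$, contradicting canonicity of $\cJ'_{x'}$. This gives local maximality (b). For the support condition (a), any point $y$ outside $V(\cJ)$ with $\mord_\cI(y)\ge\mord(\cJ)$ would, via a preimage in $X'$ (one exists whenever $y$ lies near $V(\cJ)$), contradict $V(\cJ')=f^{-1}(V(\cJ))$ being the maximality locus of $\mord_{\cI'}$. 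Part (ii) proceeds by the identical strategy with the completion morphism replacing $f$: the faithful flatness and parameter preservation suffice to reproduce the argument, and the global maximality locus is handled using the pointwise inequality $\mord_\cI(y)\le\ford_\cI(y)$ valid everywhere (since any local canonical center formally remains admissible with the same multiorder), combined with the fact that the local maximality conclusion above forces equality $\mord_\cI=\ford_\cI$ on $V(\cJ)$.

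For (iii), the forward direction hinges on showing that the induced map between completions $\hatcO_x\to\hatcO_{x'}$ (with $x=f(x')$) is itself regular; this follows from standard base change, using that $X$ and $X'$ are regular and $f$ is regular, so the fibers of the completion map are geometrically regular in characteristic zero. Both $\hatcO_x$ and $\hatcO_{x'}$ are regular complete local rings of characteristic zero, hence schemes with enough derivations, so the functoriality of canonical centers from \cite[Theorem~3.3.14]{dream_derivations} applies and transports the formal canonical center from $x$ to $x'$; the maximality locus condition is preserved since formal multiorders are invariant under regular morphisms of enough-derivations schemes. The reverse direction, under the extra hypothesis $V(\cJ)\subseteq f(X')$, is then immediate from part (i) applied formally: at each $x\in V(\cJ)$ one picks a preimage $x'\in V(\cJ')$ and applies (i) to the regular morphism $\hatcO_x\to\hatcO_{x'}$, concluding that $\hatcJ_x$ is canonical.

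The main obstacle throughout is not the pointwise comparison of multiorders, which is controlled by the preservation facts listed above, but the global maximality-locus condition: without the image hypothesis $V(\cJ)\subseteq f(X')$, a point outside $f(X')$ could in principle carry an unseen maximal (formal) multiorder that invalidates the descent. The image hypothesis in (i) and the reverse direction of (iii) is precisely what rules this out.
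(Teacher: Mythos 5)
Your proposal follows essentially the same route as the paper's proof: parts (i) and (ii) are handled by pulling a competing admissible center back through $f$ (respectively through the completion at a point) and comparing invariants, and part (iii) uses regularity of the completed homomorphism $\hatcO_x\to\hatcO_{x'}$ together with the functoriality of canonical centers in the enough-derivations setting of \cite{dream_derivations}, with the converse reduced to part (i) applied to completions. One caution: on an arbitrary regular scheme the function $\mord_\cI$ is not yet known to be defined (its existence is precisely what is being established, and it can fail without excellence), so your appeals to the ``maximality locus of $\mord_\cI$'' should be rephrased as comparisons with the specific competitor center, or with the everywhere-defined $\ford_\cI$ -- which is how the paper runs the argument and is in substance what your parenthetical justifications already provide.
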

\begin{proof}
To establish (i) and (ii) assume that $\cJ$ is not canonical. Then the restriction $\cJ_U$ onto some open subscheme $U$ is non-trivial and non-maximal, that is, there exists an $\cI_U$-admissible center $\cJ_1$ on $U$ such that either $\mord(\cJ_1)>\mord(\cJ)$ or $\mord(\cJ_1)=\mord(\cJ)$ and $V(\cJ_1)\nsubseteq V(\cJ)$. Choose $x\in V(\cJ_1)$, and if the multiorders are equal ensure also that $x\notin V(\cJ)$. Looking at the formal completion at $x$ we see that $\mord(\hatcI_x)\ge\mord(\cJ)$ and if the equality holds, then $x\notin V(\cJ)$. In either case, $\cJ$ is not formally canonical at $x$, proving (ii). In the same manner, $\cJ'$ is not $\cI'$-canonical because its restriction to $U'=U\times_XX'$ is not maximal, as witnessed by the $\cI'_{U'}$-admissible center $\cJ_1\cO_{U'}$ at a point $x'$ over $x$.

(iii) Recall that for any $x'\in X'$ with $x=f(x')$ the completed homomorphisms $\hatcO_x\to\hatcO_{x'}$ is regular by \cite[Lemma~2.1.3]{dream_derivations}. Using \cite[Theorem~3.3.15(ii)]{dream_derivations} we obtain that $\ford_{\cI'}(x')=\mord(\hatcI'_{x'})=\mord(\hatcI_x)=\ford_\cI(x)$, and hence $\ford_{\cI'}=\ford_{\cI}\circ f$. Thus, if $\ford_\cI$ attains its maximum along $V(\cJ)$, then $\ford_{\cI'}$ attains its maximum along $V(\cJ')$, and the converse is true whenever $V(\cJ)\subset f(X')$. Choose any $x'\in V(\cJ')$ and set $x=f(x')$. It remains to note that by \cite[Theorem~3.3.15(ii)]{dream_derivations} and part (i) $\hatcJ_x$ is the canonical center of $\hatcI_x$ if and only if $\hatcJ_x\hatcO_{x'}=\hatcJ'_{x'}$ is the canonical center of $\hatcI_x\hatcO_{x'}=\hatcI'_{x'}$.
\end{proof}

\subsubsection{Formal descent}
Finally, we have all the needed tools to descend canonical centers from completions.

\begin{lem}\label{formaldescent}
Assume that $X$ is a regular local G-scheme with the closed point $x$ and an ideal $\cI$ such that the restriction $\cI_U$ of $\cI$ onto $U=X\setminus \{x\}$ possesses a formally canonical center. Then $\cI$ possesses a formally canonical center.
\end{lem}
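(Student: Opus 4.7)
The strategy is to descend the canonical center $\hatcJ$ of $\hatcI$ on $\hatX = \Spec(\hatcO_x)$---which exists by \cite[Theorem~3.3.14(i)]{dream_derivations} since $\hatcO_x$ has enough derivations---back to $X$, gluing it with the hypothesised formally canonical center $\cJ_U$ of $\cI_U$ on $U$ into a single center $\cJ$ on $X$.

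First, I would verify compatibility on the punctured formal neighbourhood $\hatU := \hatX\setminus\{x\}$. The completion morphism $\hatX\to X$ is regular by the G-property, hence so is $\hatU \to U$. Lemma~\ref{formalcenter}(iii) then shows that $\cJ_U\cO_{\hatU}$ is formally canonical for $\hatcI|_{\hatU}$, and by Lemma~\ref{formalcenter}(ii) it is in fact canonical, so $\cJ_U\cO_{\hatU} = \hatcJ|_{\hatU}$.

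Next, after raising $\hatcJ$ to a suitable power $\hatcJ^N$ I may assume it is an integrally closed ordinary ideal (equal to its rounding). The descent then splits into two cases. If $V(\hatcJ) = \{x\}$, then $\hatcJ\supseteq \hat m^k$ for some $k$, and $\hatcJ$ corresponds to an ideal in $\hatcO_x/\hat m^k = \cO_x/m^k$; setting $\cJ_x := \hatcJ\cap \cO_x$ gives an ideal containing $m^k$ with $\cJ_x\hatcO_x = \hatcJ$ by faithful flatness. Otherwise $V(\hatcJ)$ meets $\hatU$; here the rounding of a weighted center on a regular scheme is a monomial ideal in a partial regular system of parameters whose sole associated prime is generated by those parameters, so $\hatcJ$ has no embedded component at $x$ and coincides with the $\hat m$-saturation of any ideal agreeing with it on $\hatU$. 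Setting $\cJ_x := \cJ_U\cap\cO_x$, the combination of the first step, flatness of $\cO_x\to\hatcO_x$, and compatibility of saturation with flat base change yields $\cJ_x\hatcO_x = \hatcJ$.

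Finally, I would invoke Lemma~\ref{functorcenter} for the regular surjection $\hatX\to X$: its image contains $V(\cJ_x)$ trivially, and the pullback $\cJ_x\hatcO_x = \hatcJ$ is a center on $\hatX$, so Lemma~\ref{functorcenter} promotes $\cJ_x$ to a center on $X$ with $\mord(\cJ_x) = \mord(\hatcJ)$; taking the $N$-th root gives the required $\cJ$. Formal canonicity is then immediate: at $x$ we have $\hatcJ_x = \hatcJ$ canonical by construction; on $U$ we have $\cJ|_U = \cJ_U$ formally canonical by hypothesis; and $\ford_\cI$ therefore attains its maximum precisely along $V(\cJ)$. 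I expect the delicate point to be the descent in the case $V(\hatcJ)\not\subseteq\{x\}$: one must simultaneously control the associated primes of the rounding of a center and identify the completion of $\cJ_U\cap\cO_x$ with $\hatcJ$, after which the upgrade from ideal to center through Lemma~\ref{functorcenter} is the second---and decisive---use of the G-property in closing the argument.
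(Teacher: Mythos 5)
Your proposal follows the paper's proof very closely: pass to the completion $\hatX=\Spec(\hatcO_x)$ where the canonical center $\hatcJ$ of $\hatcI$ exists, raise to a power so that $\hatcJ$ is an honest integrally closed ideal, split into the case $V(\hatcJ)=\{x\}$ (descend the open ideal) and the case $V(\hatcJ)\cap\hatU\neq\emptyset$ (take $\cJ_x=\cJ_U\cap\cO_x$, using that the rounding of a center has no embedded components and that this saturation/schematic closure is compatible with the flat map $\cO_x\to\hatcO_x$), and finally upgrade the descended ideal to a center via Lemma~\ref{functorcenter}. The only real deviation is cosmetic: in the open case the paper perturbs the parameters into $\cO_x$ and quotes \cite[Corollary~2.3.12]{dream_derivations}, whereas you descend the ideal through $\cO_x/m^k\cong\hatcO_x/\hat m^k$ and then also rely on Lemma~\ref{functorcenter}; both work.

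One point in your write-up is literally false in the open case and makes your concluding formal-canonicity check incomplete there. The identity $\cJ_U\cO_{\hatU}=\hatcJ|_{\hatU}$ (and later ``$\cJ|_U=\cJ_U$'') only holds when $\hatcJ$ is not open: if $V(\hatcJ)=\{x\}$ but $\cI_U$ is nontrivial, then $\hatcJ|_{\hatU}$ is the unit ideal while $\cJ_U\cO_{\hatU}$ is a nontrivial canonical center (restriction of a canonical center to an open is only canonical when it is nonempty), so in that case $\cJ|_U\neq\cJ_U$ and your final sentence does not establish that $\ford_\cI$ attains its maximum exactly at $x$. The gap is easily closed: either note that for $u\in U$ and a lift $\hat u\in\hatU$ one has $\ford_\cI(u)=\ford_{\hatcI}(\hat u)<\mord(\hatcJ)=\ford_\cI(x)$ because $\{x\}$ is the maximality locus of $\ford_{\hatcI}$, or, as the paper does, avoid the case distinction in the conclusion altogether by applying Lemma~\ref{formalcenter}(iii) once to the surjective regular morphism $\hatX\to X$: it suffices to know that $\hatcJ$ is the completion of an $\cI$-admissible center, which is exactly what your descent provides.
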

\begin{proof}
Set $\hatX=\Spec(\hatcO_x)$, $\hatcI=\cI\cO_\hatX$, $\hatU:=\hatX\setminus\{x\}$ and $\hatcI_U=\hatcI|_\hatU$. Since $\hatX\to X$ is a surjective regular morphism and $\hatcI$ possesses a formally canonical center $\hatcJ$ by \cite[Theorem~3.3.15]{dream_derivations}, it suffices to prove that $\hatcJ$ is the formal completion of an $\cI$-admissible center $\cJ$ and use Lemma~\ref{formalcenter}(iii). Raising $\cI$ to a large enough integral power we can assume that $\hatcJ$ is an actual integrally closed ideal.

If $V(\hatcJ)=\{x\}$, then $\hatcJ=[\ut^\ud]$, where $\ud\in\NN^n$ and $t_1\.t_n$ is a sequence of regular parameters of $\hatcO_x$. Furthermore, $\hatcJ$ is open, hence for $\ut'$ close enough to $\ut$ we have that $\ut'^\ud\subset\hatcJ$ and hence $\hatcJ=[\ut'^\ud]$ by \cite[Corollary~2.3.12]{dream_derivations}. In particular, we can replace $\ut$ by a close enough tuple achieving that $\ut\subset\cO_x$. Then $\cJ=[\ut^\ud]$ is the required center on $X$.

Assume now that $\hatcJ$ is not open, and hence $\hatcJ_U=\hatcJ|_\hatU$ is non-empty. Then $\hatcJ_U$ is the canonical center of $\hatcI_U$ and by our assumption, $\hatcJ_U$ is the pullback of the canonical center $\cJ_U$ of $\cI_U$. Furthermore, $V(\hatcJ)$ is the schematic closure of $V(\hatcJ_U)$ because any center is normally flat along its reduction and hence does not have embedded components. So, we extend $\cJ_U$ to an ideal on $X$ in the same way -- $\cJ=\cJ_U\cap\cO_X$, which geometrically means that $V(\cJ)$ is the schematic closure of $V(\cJ_U)$. Then $\hatcJ$ is indeed the completion of $\cJ$ because schematic closures are compatible with flat morphisms, and it remains to recall that $\cJ$ is a center by Lemma~\ref{functorcenter}.
\end{proof}

\subsection{Extension on excellent schemes}\label{extcanonicalsec}
The remaining main problem is to extend local canonical centers to a neighborhood on an N-scheme. It would suffice to know that the function $\ford_\cI$ is upper semicontinuous, or just that $\ford_\cI$ is constant on a small enough non-empty open subset of the closure $V$ of $x$, but the problem is that derivations exist only formally locally and do not glue along $V$ in a reasonable way. For the same reason it is not clear how to algebraize formal maximal contacts. This forces us to introduce an analogue, in fact, an approximation of the maximal contacts flag which can be described in elementary terms not using derivations. In a sense, this is in the spirit of Hironaka's original approach via Tschirnhaus coordinates before derivations started to be heavily exploited in resolution. In addition, since Taylor expansions are not available too, we will have to work with certain graded pieces associated to the centers.

\subsubsection{Leading term of a center}
Let $\cJ$ be a center. By the {\em leading term} $\cT_\cJ=(\cJ)/(\cJ_{>1})$ we mean the quotient of the rounding of $\cJ$ by the ideal of all elements with $\nu_\cJ(f)>1$. In other words, it is the graded piece of weight one of the filtration induced by $\nu_\cJ$. Each presentation of $\cJ$ induces a canonical base of the leading term, which will be called the {\em monomial base} of the presentation:

\begin{lem}\label{conebase}
If $\cJ$ is a center and $V=V(\cJ)$ is its support with the reduced scheme structure, then $\cT_\cJ$ is a locally free $\cO_V$-module. Furthermore, if $\cJ$ possesses a global presentation $\cJ=[t_1^{d_1}\.t_n^{d_n}]$ and $w_i=d_i^{-1}$, then $\cT_\cJ$ is free with basis formed by monomials $\ut^\ua$ with $\ua\in\NN^n$ such that $\ua\cdot\uw=a_1w_1+\dots+a_nw_n=1$.
\end{lem}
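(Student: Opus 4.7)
The statement is local on $X$, so I would fix a global presentation $\cJ=[t_1^{d_1}\.t_n^{d_n}]$, in which $t_1\.t_n$ extends to a regular system of parameters and the reduced support $V=V(\cJ)$ is cut out by the ideal $(t_1\.t_n)$, so that $\cO_V=\cO_X/(t_1\.t_n)$. The first assertion of the lemma follows from the second, so it suffices to show that in this situation $\cT_\cJ$ is free over $\cO_V$ with basis $S=\{\ut^\ua:\ua\in\NN^n,\ \ua\cdot\uw=1\}$.

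My first step is to describe both numerator and denominator as explicit monomial objects. By \cite[Lemma~2.3.5]{dream_derivations} (cf.\ Corollary~\ref{roundlem1}), the rounding is $(\cJ)=(\ut^{I_\ud})$, generated as an $\cO_X$-ideal by the monomials $\ut^\ua$ with $\ua\cdot\uw\ge 1$. Running the same argument with the threshold $1$ replaced by $1+\veps$ and taking the union over $\veps>0$, I would deduce that $(\cJ_{>1})$ is generated over $\cO_X$ by the monomials $\ut^\ua$ with $\ua\cdot\uw>1$. Since $t_i\cdot\ut^\ua$ has weight $1+w_i>1$ whenever $\ua\cdot\uw=1$, it follows that $(t_1\.t_n)(\cJ)\subseteq(\cJ_{>1})$, so $\cT_\cJ$ is naturally an $\cO_V$-module. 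Spanning by $S$ is then immediate: any $g\in(\cJ)$ is an $\cO_X$-combination $\sum f_\ua\ut^\ua$ with $\ua\cdot\uw\ge 1$, the terms with $\ua\cdot\uw>1$ vanish in $\cT_\cJ$, and each remaining term reduces to $\bar f_\ua\ut^\ua$ with $\bar f_\ua\in\cO_V$ the image of $f_\ua$.

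The substantive point is $\cO_V$-linear independence of $S$, which I would check by passing to the completion $\hatcO_x$ at an arbitrary $x\in V$. There $t_1\.t_n$ extends to a full regular system of parameters $t_1\.t_n,s_1\.s_m$ and one obtains an identification $\hatcO_x\cong A\llbracket t_1\.t_n\rrbracket$ with $A=\hatcO_{V,x}$. A relation $\sum_{\ua\in S}\bar f_\ua\ut^\ua\equiv 0\pmod{(\cJ_{>1})}$, lifted to $g=\sum f_\ua\ut^\ua$ and rewritten using the monomial description of $(\cJ_{>1})$, becomes an identity in $A\llbracket t_1\.t_n\rrbracket$ whose right-hand side consists entirely of $\ut$-monomials of weight $>1$; expanding each $f_\ua$ as a power series in $t_1\.t_n$ over $A$ and matching coefficients of the finitely many distinct $\ut$-monomials of weight exactly $1$ forces every $\bar f_\ua$ to vanish in $A$. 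Faithful flatness of $\cO_{V,x}\to A$ then yields the vanishing over $\cO_{V,x}$, and hence locally on $V$.

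The main obstacle is this coefficient-matching step, which rests on the fact that $t_1\.t_n$ is a regular sequence and hence the weighted associated graded of $\hatcO_x$ for the weights $\uw$ on $t_1\.t_n$ is the polynomial $A$-algebra $A[T_1\.T_n]$ with $\deg T_i=w_i$, whose weight-$1$ component is the free $A$-module on $S$. Once that standard regular-sequence identification is in place, everything else in the argument is bookkeeping.
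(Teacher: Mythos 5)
Your proposal is correct and takes essentially the same route as the paper: the paper's proof is a one-line appeal to \cite[Lemma~2.3.5]{dream_derivations} for the local monomial description, which is precisely the input you invoke. The extra steps you supply — the monomial description of $\cJ_{>1}$, the $\cO_V$-module structure via $(t_1\.t_n)(\cJ)\subseteq\cJ_{>1}$, and the independence check through the identification $\hatcO_x\cong\hatcO_{V,x}\llbracket t_1\.t_n\rrbracket$ and its weighted graded ring — just flesh out the bookkeeping that the paper delegates to that citation.
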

\begin{proof}
The local description follows from \cite[Lemma~2.3.5]{dream_derivations} and implies the global one.
\end{proof}

\subsubsection{Leading term projection}
By $t_\cJ\:(\cJ)\to\cT_\cJ$ we denote the projection and call it the {\em leading term projection}. Note that $\cT_\cJ$ can be very small, even zero, because of the rounding, but it is large enough when $\cJ$ is integral. If $\cJ$ is $\cI$-admissible, then $t_\cJ(\cI)$ is {\em the leading term} of $\cI$ with respect to $\cJ$.

\begin{exam}\label{mordexam}
Let $X=\Spec(k[x,y])$.

(i) Note that $\cJ=[x^{n+1},y^{n+1}]$ is the canonical center of $\cI=(xy^n+y^{n+2})$ because the order is $n+1$ and both $x$ and $y$ are maximal contacts. The leading term $\cT_\cJ$ is spanned by $x^iy^j$ with $i+j=n+1$, and $t_\cJ(\cI)$ is the span of $xy^n$.

(ii) The ideals $\cI=(x^5+x^3y^3+y^7)$ and $\cI'=(x^5+x^3y^3+y^8)$ were studied in detail in \cite[\S7.1]{ATW-weighted}, and the canonical centers are $\cJ=[x^5,y^7]$ and $\cJ'=[x^5,y^{15/2}]$, respectively. We refer to Remark~\ref{remord} for a discussion about their roundings and figures. The leading term $\cT_\cJ$ is spanned by $x^5,y^7$, and the leading term of $\cI$ is maximal possible: $t_\cJ(\cI)=\cT_\cJ$. The leading term $\cT_{\cJ'}$ is spanned by $x^5,x^3y^3,xy^6$, and $t_{\cJ'}(\cI')$ is spanned by $x^5$ and $x^3y^3$.

(iii) If $\cJ=[x^4,y^{16/3},z^{32/5}]$ is the center from Example \ref{mordexam0}, then $\cT_\cJ$ is spanned by $x^4, xy^4, x^2yz^2,y^2z^4$. As we noticed, $\cJ$ is the canonical center of $\cI=(x^4, xy^4, x^2yz^2)$ and, of course, $t_\cJ(\cI)$ is spanned by these three monomials, which already suffice to determine the multiorder and guarantee that it is integral.
\end{exam}

\subsubsection{Tschirnhaus presentations}\label{Tsec}
Let $\cJ$ be an $\cI$-admissible center. We say that a presentation $\cJ=[t_1^{d_1}\.t_n^{d_n}]$ is {\em $\cI$-Tschirnhaus} if the following conditions hold: for any $1\le i\le n$ there exists an element $f_i\in\cI$ such that the decomposition $t_\cJ(f_i)=\sum_\ua b_\ua\ut^\ua$ accordingly to the monomial basis from Lemma~\ref{conebase}
\begin{itemize}
\item[(i1)] contains a non-zero term with $b_\ua=1$ and $\ua=(c_1\.c_i,0\.0)$, $c_i\neq 0$,
\item[(i2)] does not contain non-zero terms with $\ua$ starting with $(c_1\.c_{i-1},c_i-1)$.
\end{itemize}

Note that conditions (i1) immediately imply that $\nu_\cJ(\ut^\ua)=1$ and hence $\ud\in\Mord$ and $\cJ$ is integral. Condition (i2) is a sort of Tschirnhaus condition and it will be achieved in a construction below via the classical Tschirnhaus substitution. We will now prove that existence of a Tschirnhaus presentation is equivalent to canonicity of the center, and here is the first half of this result.

\begin{lem}\label{tschlem}
If $\cI$ is an ideal on a local regular G-scheme $X$, and $\cJ$ is a center which possesses an $\cI$-Tschirnhaus presentation $\cJ=[\ut^\ud]$, then $\cJ$ is formally $\cI$-canonical.
\end{lem}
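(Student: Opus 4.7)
The plan is to pass to the formal completion and proceed by induction on the length $n$ of $\ud$. Since $X$ is a G-scheme, the map $\cO_x\to\hatcO_x$ is regular, so $\hatcO_x$ is a regular complete local ring with enough derivations and the canonical center of $\hatcI_x$ is known to exist by \cite[Theorem~3.3.14]{dream_derivations}. Our task is thus to identify this canonical center with $\hatcJ_x$ at every point of $V(\cJ)$, and to verify that $V(\cJ)$ coincides with the maximality locus of $\ford_\cI$.

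For the inductive step I would first extract the zeroth-level information from $f_1$. By Lemma~\ref{conebase} every basis monomial $\ut^\ua$ of $\cT_\cJ$ satisfies $\sum a_j/d_j=1$, and hence has total degree $\sum a_j\ge d_1$, with equality only at $\ua=(d_1,0\.0)$. The admissibility inclusion $\cI\subseteq(\cJ)\subseteq m_x^{d_1}$, together with condition (i1) at $i=1$ (coefficient $1$ on $t_1^{d_1}$) and the direct degree bound $(\cJ_{>1})\subseteq m_x^{d_1+1}$, then forces
\[
f_1\equiv t_1^{d_1}\pmod{m_x^{d_1+1}},
\]
so that $\ord(\hatcI_x)=d_1$ and $t_1$ serves as a maximal contact to $\hatcI_x$. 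Setting $H=V(t_1)\subset\hatX_x$ and denoting by $\cC$ the associated coefficients ideal on $H$, \cite[Theorem~3.3.14]{dream_derivations} reduces the canonicity assertion to proving that $[t_2^{d_2}\.t_n^{d_n}]$ is the canonical center of $\cC$.

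The heart of the argument, and the principal technical obstacle, is to verify that the restrictions $(t_2|_H\.t_n|_H)$ together with appropriately normalized restrictions of $f_2\.f_n$ assemble into a Tschirnhaus presentation of $[t_2^{d_2}\.t_n^{d_n}]$ with respect to $\cC$. Condition (i2) is indispensable here: it is precisely the classical Tschirnhaus-style vanishing which prevents a monomial carrying the ``dangerous'' exponent $c_i-1$ in the $i$-th variable from polluting the leading term when one projects onto the coefficients ideal, so that both (i1) and (i2) at the next level descend from $\hatX_x$ to $H$. Granted this, the induction hypothesis identifies the canonical center of $\cC$ with $[t_2^{d_2}\.t_n^{d_n}]$, and combining with the product decomposition above shows that $\hatcJ_x$ is canonical for $\hatcI_x$. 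The same argument, carried out after localization at an arbitrary $x'\in V(\cJ)$, handles canonicity at every point of $V(\cJ)$.

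It remains to verify that $V(\cJ)$ is the full maximality locus of $\ford_\cI$. Admissibility already yields $\ford_\cI\ge\ud$ on $V(\cJ)$, and the preceding step upgrades this to equality. For $y\notin V(\cJ)$, at least one $t_i$ becomes a unit in $\cO_y$; discarding those indices produces a strictly shorter $\cI_y$-admissible center $\cJ_y$, of multiorder strictly smaller than $\ud$, together with an inherited Tschirnhaus presentation of $\cJ_y$ with respect to $\cI_y$. Applying the canonicity statement already proved, now at $y$, gives $\ford_\cI(y)<\ud$, so $V(\cJ)$ is exactly the maximality locus.
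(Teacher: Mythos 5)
There is a genuine gap, and it sits exactly at the step your whole induction rests on: the assertion that $t_1$ ``serves as a maximal contact to $\hatcI_x$''. Having initial form $t_1^{d_1}$ (even granting that congruence, see below) does not make $t_1$ a maximal contact: the Tschirnhaus condition (i2) only constrains the \emph{leading term} $t_\cJ(f_1)$, i.e.\ the monomials with $\nu_\cJ=1$, while $f_1$ may still contain terms $a_1t_1^{d_1-1}$ with $\nu_\cJ(a_1t_1^{d_1-1})>1$, and such terms destroy maximal contact without being visible modulo $m_x^{d_1+1}$. Concretely, take $\cI=(f)$ with $f=t_1^2+t_1t_2^3+t_2^5$ and $\cJ=[t_1^2,t_2^5]$: the pair $(t_1,t_2)$ is an $\cI$-Tschirnhaus presentation (the term $t_1t_2^3$ has $\nu_\cJ=11/10>1$, so the leading term is $t_1^2+t_2^5$), the initial form of $f$ is exactly $t_1^2$, yet $\cD^{(\le 1)}(\cI)=(2t_1+t_2^3,\,t_2^4)$ does not contain $t_1$; the maximal contact is $t_1+t_2^3/2$, not $t_1$. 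So you cannot feed $H=V(t_1)$ directly into the coefficient-ideal machinery of \cite{dream_derivations}. The paper's proof is devoted precisely to repairing this: after completing, one writes $f=ut^{d}+\sum_i a_it^{d-i}$, normalizes $u=1$ by replacing $t$ with $u^{1/d}t$, and then replaces $t$ by the genuine maximal contact $t'=\partial_t^{d-1}(f)/d!=t+a_1/d$; condition (i2) is used to show $a_1t^{d-1}\in\cJ_{>1}$, so this substitution changes neither the presentation $\cJ=[\ut^\ud]$ nor the monomial basis of $\cT_\cJ$ nor the Tschirnhaus property. That correction argument is the core content of the lemma and is entirely absent from your proposal.

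Two further points. First, the congruence $f_1\equiv t_1^{d_1}\pmod{m_x^{d_1+1}}$ is itself false when several $d_j$ equal $d_1$ (e.g.\ $\cJ=[t_1^2,t_2^2]$, $f=t_1^2+t_2^2$ is Tschirnhaus), since ``equality only at $\ua=(d_1,0\.0)$'' fails for weakly increasing $\ud$; this is not fatal for the order computation but it shows the degree bookkeeping needs care. Second, the step you flag as ``the principal technical obstacle'' --- that the restriction to $H$ yields a Tschirnhaus presentation of the (suitably rescaled) center $\cJ_H^{(d-1)!}$ with respect to the coefficient ideal $\cC_{\partial_t}(\cI)|_H$ --- is only asserted (``Granted this\dots''), whereas it is the inductive engine; in the paper it is a short verification \emph{after} $t_1$ has been corrected to a maximal contact, which is what makes it routine. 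The final paragraph about the maximality locus of $\ford_\cI$ also leans on the canonicity statement you have not yet established, so as written the argument is circular there as well; but the decisive missing idea is the maximal-contact correction $t\mapsto t+a_1/d$.
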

\begin{proof}
We should prove that $\hatcJ=\cJ\hatcO_x$ is the canonical center of $\hatcI=\cI\hatcO_x$, where $x\in X$ is the closed point. Of course, $\ut$ also gives rise to an $\hatcI$-Tschirnhaus presentation of $\hatcJ$, so passing to the completions we can simplify the setting and assume that $X=\Spec(A)$, where $A$ is a complete local regular ring, say, $A=C\llbracket\ut\rrbracket$ with $C=k\llbracket\us\rrbracket$ and parameters $\us=(s_1\.s_m)$ that do not show up in the presentation of $\cJ$.

Now we will want to use results about maximal contacts from \cite{dream_derivations}, and we will first show that $t=t_1$ can be replaced by a maximal contact element. Set $d=d_1$ and choose $f\in\cI$ such that $t_\cJ(f)=t^d+a_2t^{d-2}+\dots+a_d$. Since the order of $f$ is $d$ one can find a presentation $f=ut^d+\sum_{i=1}^{d}a_it^{d-i}$, where $u\in 1+m_A$ is a principal unit and $a_i\in C\llbracket t_2\.t_n\rrbracket$. Indeed, one can just reduce $f$ by $t$ using the Gr\"obner reduction with respect to the graded lex order of monomials. Note that $u^{1/d}\in A$, so replacing $t$ by $u^{1/d}t$ we can assume that $u=1$. This change of the presentation does not affect the monomial base of the graded cone $\cT_\cJ$ , hence the new tuple $\ut$ gives rise to a Tschirnhaus presentation as well. The element $t'=\partial_t^{d-1}(f)/d!=t+a_1/d$ lies in $\cD_X^{(\le d-1)}(\cJ)\subseteq\cJ^{1/d}$, hence $a_1\in\cJ^{1/d}$ and $a_1t^{d-1}\in\cJ$. Moreover, by the Tschirnhaus assumption the leading term of $a_1t^{d-1}$ vanishes, and hence $a_1t^{d-1}\in\cJ_{>1}$. It follows that replacing $t$ by the maximal contact $t'$ does not affect the monomial base of $\cT_\cJ$, and hence the new presentation satisfies the same Tschirnhaus condition.

Once we have achieved that $t=t_1$ is a maximal contact, the rest follows by induction. Set $H=V(t)$ and $\cI_1=\cC_{\partial_t}(\cI)|_H$. The definition of Tschirnhaus presentations is easily seen to be compatible with the restriction to $V(t_1)$, that is, the presentation $\cJ_H^{(d-1)!}=[t_2^{d_2(d-1)!}\.t_n^{d_n(d-1)!}]$ is $\cI_1$-Tschirnhaus. By induction on $n$ we can assume that $\cJ_H^{(d-1)!}$ is the canonical center of $\cI_1$, and then \cite[Lemma~3.3.11 and Theorem~3.3.14(i)]{dream_derivations} implies that $\cJ$ is the canonical center of $\cI$. This finishes the proof, but we also note for completeness that similar changes of variables can be done inductively for $t_2, t_3\dots$ to transform the Tschirnhaus presentation to a maximal contacts flag without affecting the monomial basis of $\cT_\cJ$.
\end{proof}

\subsubsection{Construcitve canonicity}
Finally, we can use Tschirnhaus coordinates to obtain a constructive criterion of canonicity.

\begin{theor}\label{tschirnhausexist}
Assume that $X$ is a local regular G-scheme, $\cI$ is an ideal on $X$. Then the following conditions on an $\cI$-admissible center $\cJ$ are equivalent and imply that $\cJ$ is integral:

(i) $\cJ$ is formally canonical,

(ii) $\cJ$ is canonical,

(iii) $\cJ$ possesses an $\cI$-Tschirnhaus presentation.
\end{theor}
\begin{proof}
The implications (iii)$\Longrightarrow$(i)$\Longrightarrow$(ii) are covered by Lemmas~\ref{tschlem} and \ref{formalcenter}(ii), so it remains to prove that (ii) implies (iii). Assume that $\cJ$ is the canonical center of $\cI$ and set $\ud=\mord(\cJ)$. We start with a presentation $\cJ=[\ut^\ud]$ and we will gradually transform it to an $\cI$-Tschirnhaus one. By induction, we can assume that the two Tschirnhaus conditions are satisfied for $1\.i-1$, and we should prove that replacing $t_i\.t_n$ by $t'_i\.t'_n$ one can obtain another presentation which satisfies also (i1) and (i2). We start with the following claim whose proof uses the same combinatorics as in our study of integral centers in \S\ref{Mordsec}. In particular, we again use the notation $\ud_\veps=(d_1\.d_{i-1},d_i+\veps\.d_i+\veps)$ for a rational $\veps\ge 0$.

Claim: the leading term $t_\cJ(\cI)$ contains an element $t_\cJ(f)$ whose decomposition $\sum_\ua b_\ua\ut^\ua$ involves a non-zero term $b_\uc t^\uc$ such that $\uc\cdot\ud^{-1}_0\le 1$ and $c_j\neq 0$ for some $j\ge i$.

Assume, to the contrary, that the claim fails and no such element exists. Let us show that for each $f\in\cI$ and a rational small enough $\veps>0$ depending on it, one has that $f\in\cJ_\veps=[\ut^{\ud_\veps}]$. Indeed, this can be checked after formal completion, when there exists a decomposition $f=\sum_{\uc\in\NN^n}b_\uc\ut^\uc$ such that either $b_\uc=0$ or $\uc\cdot\ud\ge 1$, and if $\uc\cdot\ud=1$, which happens if and only if the term contributes to the leading term $t_\cJ(f)$, then $\uc\cdot\ud_0>1$ and hence also $\uc\cdot\ud_\veps>1$ for a small $\veps$. Now, existence of such an $\veps=\veps(f)$ is checked similarly to the proof of Lemma~\ref{comblem}: it is enough to test finitely many $\uc$ with $\uc\cdot\ud>1$, and this is done by the same type of estimate. Thus, for a small $\veps>0$ we have that $\cJ_\veps$ contains a finite set of generators of $\cI$, and hence $\cJ_\veps$ is an $\cI$-admissible center with $\mord(\cJ')=\ud_\veps>\ud=\mord(\cJ)$. The contradiction proves that an $f\in\cI$ as above exists, finishing the proof of the claim.

Now we will use $f$ to produce an appropriate coordinate change. First, renormalizing $f$ (i.e. multiplying it by a unit) we can also achieve that $b_\uc=1$. Second, since $\uc\cdot\ud^{-1}=1\ge\uc\cdot\ud_0^{-1}$, we necessarily have that $c_j=0$ for any $j$ such that $d_j>d_i$. In other words, $t^\uc=xy$, where $x=\prod_{j=1}^{i-1}t_j^{c_j}$ and the monomial $y=\prod_{j=i}^mt_j^{c_j}$ involves only coordinates of weight $1/d_i$, that is, $d_i=\dots=d_m$. Replacing $t_j$ by $t_j+b_it_i$ for $i+1\le j\le m$ and general enough $b_i\in\QQ$ we achieve that after another renormalization $t_\cJ(f)$ contains a non zero monomial of the form $xt_i^{e}$, where $e=\sum_{j=i}^mc_j$. Clearly, this change does not affect the Tschirnhaus conditions for $1\.i-1$ and the condition (i1) is now satisfied with $\uc'=(c_1\.c_{i-1},e,0\.0)$.

Next, let $xt_i^{e-1}y_1\.xt_i^{e-1}y_l$ with $y_j$ being monomials in $t_{i+1}\.t_n$ be all terms of $t_\cJ(f)$ starting with $xt_i^{e-1}$. To get rid of them, we perform the classical Tschirnhaus substitution $t'_i=t_i-e^{-1}\sum_{j=1}^l xy_j$ (and replace $t_i$ by $t'_i$). The new tuple gives rise to a presentation of $\cJ$ by \cite[Corollary~2.3.12]{dream_derivations} because $\nu_\cJ(t_i)=\nu_\cJ(y_j)=1-\nu_\cJ(xt_i^{e-1})$ and hence $t'^{d_i}_i\in\cJ$. This substitution does not affect the conditions for $1\.i-1$ and (i1), and guarantees that (i2) is satisfied too.
\end{proof}

\begin{rem}
The intuitive meaning of the theorem is that the canonical center $\cJ$ is naturally determined by the condition that $t_\cJ(\cI)$ spans a subspace of $\cT_\cJ$, which is not contained in the leading term of any proper subcenter, and the Tschirnhaus condition is a convinient technical way to guarantee this.
\end{rem}

\subsubsection{Extension of canonicity}
As an application of Tschirnhaus coordinates we obtain an extension result for general excellent schemes:

\begin{cor}\label{extendcor}
Assume that $X$ is an excellent regular scheme, $x\in X$ is a point, $\cI$ is an ideal on $X$ and $\cJ_x$ is the canonical center of $\cI_x$. Then there exists a neighborhood $U$ of $x$ such that $\cJ_x$ extends to the canonical center $\cJ_U$ of $\cI_U=\cI|_U$.
\end{cor}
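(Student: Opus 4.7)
The plan is to use Theorem~\ref{tschirnhausexist} to extract a Tschirnhaus presentation of $\cJ_x$, spread this presentation out to a Zariski neighborhood via the finite sheaf-theoretic data encoding it, and finally verify formal canonicity via Lemma~\ref{formalcenter}(ii).

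First, I would apply Theorem~\ref{tschirnhausexist}(ii)$\Rightarrow$(iii) at the local G-scheme $\Spec\cO_{X,x}$ to obtain an $\cI_x$-Tschirnhaus presentation $\cJ_x=[t_1^{d_1}\.t_n^{d_n}]$ of multiorder $\ud$ with witnesses $f_1\.f_n\in\cI_x$. A Tschirnhaus presentation is encoded by finitely many elements of the stalks of $\cO_X$ and $\cI$ at $x$, together with finitely many equalities in the locally free $\cO_{V(\cJ_x)}$-module $\cT_{\cJ_x}$ of Lemma~\ref{conebase} (certain monomial coefficients of $t_{\cJ_x}(f_i)$ being $1$, others vanishing). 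All of these data spread out to some Zariski neighborhood $U_0$ of $x$: sections $t_i\in\Gamma(U_0,\cO_X)$ and $f_i\in\Gamma(U_0,\cI)$, together with the coefficient identities holding as sections of $\cO_{V(t_1\.t_n)\cap U_0}$. Since being part of a regular system of parameters is an open condition on a regular scheme, a further shrinking ensures that $t_1\.t_n$ are parameters at every $y\in V(t_1\.t_n)\cap U_0$, and I would define $\cJ_U=[t_1^{d_1}\.t_n^{d_n}]$ on the shrunk $U$.

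At every $y\in V(\cJ_U)$ the localized data give an $\cI_y$-Tschirnhaus presentation of $\cJ_y$, so Theorem~\ref{tschirnhausexist}(iii)$\Rightarrow$(i) yields that $\cJ_y$ is formally $\cI_y$-canonical; in particular $\hat\cJ_y$ is the canonical center of $\hat\cI_y$ and $\ford_y(\cI)=\ud$ for every such $y$. By Lemma~\ref{formalcenter}(ii) it then suffices to verify that $V(\cJ_U)$ is the maximality locus of $\ford_{\cI_U}$ on $U$.

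The hard part will be this final maximality check, since $\ford_\cI$ is not a priori upper semi-continuous. I would handle the first coordinate using the N-property: $\ord_\cI$ is upper semi-continuous and equals $d_1$ at $x$, so the closed locus $\{\ord_\cI\ge d_1+1\}$ does not contain $x$, and shrinking $U$ to remove it ensures $\ord_\cI\le d_1$ throughout $U$, which rules out any $\cI_U$-admissible center of strictly larger multiorder than $\cJ_U$. To eliminate potential competitors $y\in U\setminus V(\cJ_U)$ with $\ford_y=\ud$, I would proceed by induction on the length $n$ of $\ud$, using $H=V(t_1)\cap U$ as an approximate maximal contact supplied by the Tschirnhaus element $f_1=t_1^{d_1}+\text{(lower)}$. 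A derivation-free coefficient-ideal-type construction on $H$, built from the Tschirnhaus witnesses $f_2\.f_n$ and parallel to the descent to $\cI_1=\cC_{\partial_{t_1}}(\cI)|_H$ in the proof of Lemma~\ref{tschlem}, should reduce the problem to a lower-length instance of the corollary, which by induction applies on a smaller neighborhood of $x$. Combining the two shrinkings yields a neighborhood $U$ on which $V(\cJ_U)$ equals the maximality locus of $\ford_{\cI_U}$, and Lemma~\ref{formalcenter}(ii) then concludes that $\cJ_U$ is the canonical center of $\cI_U$ extending $\cJ_x$.
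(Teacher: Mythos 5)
Your first half coincides with the paper's proof: extract an $\cI_x$-Tschirnhaus presentation from Theorem~\ref{tschirnhausexist}, spread the parameters $\ut$, the witnesses $f_i$ and the finitely many coefficient identities in the free module $\cT_\cJ$ of Lemma~\ref{conebase} out to a neighborhood, and observe that the localized data give an $\cI_y$-Tschirnhaus presentation at every $y\in V(\cJ_U)$. One correction there: ``being part of a regular system of parameters is an open condition on a regular scheme'' is not a general fact -- it is exactly the N-condition (openness of the regular locus of $V(t_1\.t_n)$), and it fails for the regular non-excellent surface of \S\ref{Nsec}; since $X$ is excellent your shrinking is legitimate, but this is precisely where excellence, not regularity, is being used.

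The genuine gap is in your final step. By routing through Lemma~\ref{formalcenter}(ii) you oblige yourself to prove that $V(\cJ_U)$ is the maximality locus of $\ford_{\cI_U}$ on all of $U$, and your proposed tool -- a ``derivation-free coefficient-ideal-type construction'' on $H=V(t_1)$ imitating $\cC_{\partial_{t_1}}(\cI)$, plus induction on the length of $\ud$ -- is exactly the step the paper identifies as unavailable: derivations, maximal contacts and coefficient ideals exist only on the completions $\hatX_y$, the formal data at $x$ and at nearby points $y$ are unrelated, and no global substitute is actually defined in your proposal; producing one is essentially equivalent to the semicontinuity of $\mord$, which in the paper is deduced only afterwards (Theorem~\ref{canonicalcenter}) from this corollary combined with formal descent and noetherian induction, so it cannot be an input here. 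Moreover your intermediate claim is false as stated: bounding $\ord_\cI\le d_1$ does not exclude competitors of strictly larger multiorder, e.g.\ $(d_1,d_2+1,\dots)$, so even the first step of the intended induction does less than claimed. The paper needs none of this: having spread out the Tschirnhaus data, it simply applies Theorem~\ref{tschirnhausexist} (iii)$\Rightarrow$(ii),(i) at every point $y\in V(\cJ_U)$ -- the whole point of Tschirnhaus presentations is that they certify canonicity of the localizations $\cJ_y$ along the support directly, with no coefficient-ideal descent or semicontinuity argument over $U$; the bookkeeping over $U\setminus V(\cJ_U)$ is handled later, inside the noetherian induction of Theorem~\ref{canonicalcenter}, via the additional shrinking $U\setminus V(\cJ_U)\subset W$. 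Replacing your last paragraph by this pointwise application of Theorem~\ref{tschirnhausexist} closes the gap and recovers the paper's argument.
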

\begin{proof}
By Theorem \ref{tschirnhausexist} there exists a Tschirnhaus presentation $\cJ_x=[\ut^\ud]$, where $t_i$ are global functions on a neighborhood $U$ of $x$. By the N-condition, shrinking $U$ we can assume that the scheme $V=V_U(t_1\.t_n)$ is regular and hence we obtain a center $\cJ_U=[\ut^\ud]$ on $U$. Shrinking $U$ again we also achieve that $\cJ_U$ is $\cI_U$-admissible. In view of Theorem~\ref{tschirnhausexist} it suffices to prove that after an additional shrinking of $U$ we can also achieve that the presentation is $\cI_U$-Tschirnhaus everywhere along $V$. Choose an element $f_i\in\cI_x$ such that the decomposition $t_\cJ(f_i)=\sum_\ua b_\ua\ut^\ua$ accordingly to the monomial basis satisfies the conditions (i1) and (i2) from \S\ref{Tsec}. Then the same decomposition holds along $V$ in the neighborhood $U$ on which $f_i$ is defined, and hence the conditions hold in $U$ as well.
\end{proof}

\subsubsection{A non-excellent model and some conclusions}\label{conclusions}
In order to prove the extension lemma we had to provide a very constructive and explicit description of canonical centers in terms of Tschirnhaus coordinates. The latter were constructed using a formal presentation and the theory of maximal contact, so the following natural question arises: can one construct them directly obtaining a simpler and more direct proof of existence of canonical centers, which will uniformly work on all excellent schemes? Here is an informal argument why this is impossible. The argument should somehow exclude local schemes whose completion morphism is not regular, and this cannot be encoded in simple invariants like order of ideal, etc. Let us illustrate this on a simple model case from Example~\ref{nocanonical} where no maximal center exists but partial approximations are $\cJ_n=[z_n^2,t^{n}]$, where $z_n=x-f_n$. Of course, $z_n,t$ are not Tschirnhaus coordinates, as detected by a monomial $cz_nt^n$ in the leading term of $\cJ_n$, and this forces us to make a correction of $z_n$, say, replace it by $z_{n+1}$. However, the leading term contains no more specific information, which would allow to directly generate any better approximation such as $z_{n+2}$, and on the level of Tschirnhaus coordinates this will only be detected on the next step, when we look at the image of $y$ in the leading term of $\cJ_{n+1}$. By a ``mysterious'' coincidence built in our example all these images in the leading terms will turn out to be a square, forcing the algorithm to produce the sequence of $\cI$-admissible centers with unbounded multiorder and run without stop.

In view of the above example, it seems plausible that a direct approach based on Tschirnhaus coordinates cannot succeed, and one has to use either derivations (as in the modern approaches via maximal contacts) or Weierstrass division/preparation on the formal level (as in the original Hironaka's approach). For example, the correct formal center $x-f$ could be produced either by deriving $y$ or by a Weierstrass division in formal series, which clears off all terms $c_izt^i$ at once, and not only the minimal one. In either case, one then has to use a non-trivial input to descend this to general excellent schemes without enough derivations.

\subsection{The main principalization results}

\subsubsection{The canonical stratification}
Recall that existence of the canonical stratification of $\cI$ implies that for any open subscheme $U\subseteq X$ the restriction $\cI_U$ possesses the canonical center, $\mord_\cI$ exists, is upper semicontinuous and takes values in $\Mord$, see \cite[Lemma~3.3.7]{dream_derivations} and Theorem~\ref{maxcenterth}.

\begin{theor}\label{canonicalcenter}
Let $X$ be a noetherian regular excellent stack of characteristic zero and let $\cI$ be an ideal on $X$, then

(i) $\cI$ possesses a canonical stratification $V(\cI)=\coprod_i V(\cJ_i)$. In particular, the multiorder function $\mord_\cI$ is well-defined, upper semicontinuous and takes values in $\Mord$.

(ii) If $f\:X'\to X$ is a regular morphism, then $\coprod_iV(\cJ_i\cO_{X'})$ is the canonical stratification of $\cI'=\cI\cO_{X'}$ and $\mord_\cI=\mord_{\cI'}\circ f$.

(iii) Let $\cJ=\cJ(\cI)$ be the canonical center, let $(d_1\.d_n)=\mord_X(\cI)$, let $N>0$ be a natural number such that $N/d_i\in\NN$ for $1\le i\le n$ and let $\cI'$ be the transform of $\cI$ with respect to $X'=\Bl_{\cJ^{1/N}}(X)\to X$. Then $\mord_{X'}(\cI')<\mord_X(\cI)$.
\end{theor}
\begin{proof}
In view of the functoriality claim (ii) it suffices to prove the theorem for schemes, so we assume in the sequel that $X$ an $X'$ are schemes. The main claim is that $\cI$ possesses canonical stratification. We will use notherian induction, so assume that $\cI_W=\cI|_W$ possesses a canonical stratification for an open subscheme $W\subsetneq X$, and let us prove that this is also true for a larger open subscheme. Choose any generic point $\eta$ of the complement $X\setminus W$ and consider the localization $X_\eta$. Then $X_\eta\setminus \eta$ lies in $W$ and hence possesses canonical stratification by the functoriality. Therefore, $\cI_\eta$ possesses a canonical center by Lemma~\ref{formaldescent}. By Corollary~\ref{extendcor} there exists a neighborhood $U$ of $\eta$ such that $\cI_U$ possesses a canonical center $\cJ_U$ with $\eta\in V(\cJ_U)$. Shrinking $U$ we can achieve that $U\setminus V(\cJ_U)\subset W$, and then $\cI_U$ possesses a canonical stratification. Therefore the union $U\cup W$ also possesses a canonical stratification, and $W\neq U\cup W$ because $\eta\in U\setminus W$.

Now, let us deduce the other claims. By Theorem~\ref{maxcenterth} $\mord_\cI$ takes values in $\Mord$ and it is upper semicontinuous by \cite[Lemma~3.3.7]{dream_derivations}, proving the other claims of (i). In (ii) it suffices to prove that if $\cJ$ is the canonical center of $\cI$ and $\cJ\cO_{X'}\neq\cO_{X'}$, then $\cJ\cO_{X'}$ is the canonical center of $\cI'$. Then the claim about the stratification follows by induction and implies that $\mord_\cI=\mord_{\cI'}\circ f$. It remains to recall that formal canonicity is equivalent to canonicity by Theorem~\ref{tschirnhausexist}, and functorility for formal canonicity has been already proved in Lemma~\ref{formalcenter}(iii). Finally, root blowings up of centers are compatible with regular morphisms, hence the drop of the multiorder in (iii) can be checked on formal completions of $X$. In this case we have enough derivations, so the result was established in \cite[Theorem~3.4.4]{dream_derivations}.
\end{proof}

\subsubsection{Dream principalization}\label{princsec}
Given a regular excellent stack $X$ with an ideal $\cI$ we now define the {\em dream principalization sequence} $X_n\to\dots X_0=X$ inductively by the rules $\cI_0=\cI$, $\cI_{i+1}=\cI_i\cO_{X_{i+1}}(\cJ_i\cO_{X_{i+1}})^{-1}$, $\cJ_i=\cJ(\cI_i)$ and $X_{i+1}=\Bl_{\cJ_i^{1/N_i}}(X_i)$, where $N_i$ is the smallest positive integer such that $N_i/d_i\in\NN$ for $1\le i\le n$ and $(d_1\.d_n)=\mord(\cJ_i)$. The regularizing root order $N_i$ is chosen minimal possible for the sake of canonicity. Thus the whole procedure is just to iteratively find the maximal admissible center, blow it up with an appropriate regularizing root construction and transform the ideal.

\begin{proof}[Proof of Theorem \ref{principalizationth}]
We deduce everything from Theorem~\ref{canonicalcenter}. Indeed, it asserts that the canonical centers and hence the principalization sequence exist, each blowing up decreases the multiorder of the transform of the ideal, and the sequence terminates after finitely many steps because the set of invariants $\Mord$ is well-ordered. In the end the invariant attains the minimal value $(0)$ and hence the ideal is trivial. So, we obtain (i). By induction on $i$ one easily sees that at each step one obtains the decomposition $\cI\cO_{X_i}=\cI_i\prod_{j=0}^{i-1}\cJ_{j}\cO_{X_i}$, where all ideals except $\cI_i$ are invertible. Since $\cI_n=(1)$ we obtain that $\cI\cO_{X_n}=\prod_{j=0}^{n-1}\cJ_{j}\cO_{X_n}$ is invertible, yielding (iii).

Since canonical stratifications are compatible with regular morphisms, by induction on $i$ we obtain that each center of the principalization sequence of $X$ with a non-empty pullback over $Y$ pulls back to a center of the principalization sequence of $Y$, yielding (iii). In particular, the functoriality implies that the sequence is trivial over the complement of $V(\cI)$, concluding the proof.
\end{proof}

\subsubsection{Dream embedded resolution}
As another application of Theorem~\ref{canonicalcenter} we obtain resolution of reduced subschemes of $X$. Assume now that $Z\into X$ is a closed reduced subscheme of constant codimension in $X$, then we define the {\em dream embedded resolution sequence} $X_n\to\dots X_0=X$ inductively by the rules $Z_0=Z$, $Z_{i+1}\neq\emptyset$ is the strict transform of $Z_i$ under $X_{i+1}\to X_i$, $\cI_i\subset\cO_{X_i}$ is the ideal defining $Z_i$, $\cJ_i=\cJ(\cI_i)$ and $X_{i+1}=\Bl_{\cJ_i^{1/N_i}}(X_i)$, where $N_i$ is the smallest positive integer such that $N_i/d_i\in\NN$ for $1\le i\le n$ and $(d_1\.d_n)=\mord(\cJ_i)$. The whole procedure is just to iteratively find the maximal $\cI_i$-admissible center $\cJ_i$, blow it up with an appropriate regularizing root construction and transform the subscheme. By definition, the process ends when $V(\cJ_i)$ contains a generic point of $Z_i$ and hence the next strict transform $Z_{i+1}$ would not be birational to $Z_i$.

\begin{theor}\label{embeddedth}
Let $X$ be a noetherian regular excellent stack of characteristic zero and $Z\into X$ a closed reduced subscheme of constant codimension, then:

(i) The dream embedded resolution sequence of $X$ is finite and ends with a regular $Z_n$.

(ii) The sequence $X_n\to\dots\to X_0=X$ is trivial over the regular locus of $Z$.

(iii) The induced sequence $Z_n\to\dots\to Z_0=Z$ is a resolution of $Z$ by a sequence of stack-theoretic modifications.

(iv) The sequence is functorial with respect to any regular morphism $f\:X'\to X$: the dream embedded resolution of $Z'=Z\times_XX'$ in $X'$ is obtained from the dream embedded resolution of $Z$ in $X$ by pulling it back to a sequence of normalized root blowings up of $X'$ and removing all blowings ups with empty centers..

(v) The induced resolution sequence $Z_n\to\dots\to Z_1\to Z_0=Z$ is independent of the embedding $Z\into X$.
\end{theor}
\begin{proof}
(i) Consider the $i$-th blowing up. The strict transform $Z_i$ of $Z_{i-1}$ is the schematic closure of $Z_{i-1}\setminus V(\cI_{i-1})$, hence it is a closed subscheme of the scheme defined by $\cI'_i=\cI_{i-1}\cO_{X_{i}}(\cJ_{i-1}\cO_{X_{i}})^{-1}$, which implies that $\cI'_i\subseteq\cI_i$ and $\mord_{X_i}(\cI_i)\le\mord_{X_i}(\cI'_i)<\mord_{X_{i-1}}(\cI_{i-1})$. Thus, the invariant $\mord_{X_i}(\cI_i)\in\Mord$ drops and hence the algorithm stops after finitely many steps. Once it ends with some $Z_n$, the canonical center contains a generic point $\eta$. Since $Z_n$ is reduced at $\eta$, it is regular at $\eta$ and hence $\mord_\eta(\cI_n)=(1\.1)$ is of length $c=\dim(\cO_{X,\eta})$. Since $Z_n$ is also reduced of codimension $c$ at any other generic point $\eta'$ we also have that $\mord_{\eta'}(\cI_n)=(1\.1)$, and hence $Z\subseteq V(\cJ_n)$, which implies that the equality holds and $Z$ is regular.

Claims (ii)--(iv) follow obviously, so it remains to check (v). Recall first the re-embedding principle from \cite{ATW-weighted}: if $X\into X'$ is a closed immersion of a constant codimension $c$ with a regular $X'$, then the induced resolutions of $Z$ coincide and the multiorders in $X'$ are obtained from the multiorders in $X$ by adding $c$ ones, namely, $\mord_{X'_i}(\cI'_i)=(1\.1,\mord_{X_i}(\cI_i))$. Indeed, locally $X=V_{X'}(s_1\.s_c)$ for a regular sequence of parameters in $X'$, hence $\cJ(\cI')=[s_1\.s_c,t'^{d_1}_1\.t'^{d_n}_n]$, where the images $t_i\in\cO_X$ give rise to a presentation $\cJ(\cI)=[\ut^\ud]$. Moreover, it is easy to see that this is preserved by blowings up of centers on $X$ (and the corresponding centers on $X'$): $X_i=V_{X'_i}(s_1\.s_c)$, hence the multiorders agree throughout the sequence.

Assume now that $Z\into X'$ is another embedding of constant codimension and let $Z'_{n'}\to\dots\to Z'_1\to Z$ be the induced resolution. Without limitation of generality, $Z$ is of larger codimension in $X$, and using the re-embedding principle, we can replace $X$ by an appropriate $\bbA^n_X$ achieving that $Z$ is of the same codimension in both $X$ and $X'$. It now suffices to check that the sequences are equal after localization at a point $z\in Z$, including equality of multiorders at each blow up step $Z_{i+1}\to Z_i$, which guarantees synchronization at different points. The claim about local sequences can be checked after formal completion, reducing to the case when $Z=\Spec(A)$ with a local complete $A$ and residue field $k$. Then $Z\into X$ and $Z\into X'$ factor through a minimal embedding $Z\into X_0$ with $X_0=\Spec(k\llbracket t_1\.t_n\rrbracket)$ (see \S\ref{minemb} below), and it remains to use the re-embedding principle once again.
\end{proof}

\begin{rem}
The algorithm does not provide strong resolution of non-reduced schemes in the sense of Hironaka, as the following example by Dan Abramovich shows. Let $X=\Spec(k[x,y,z])$ and $\cI=(x^2,y^2,xyz)$. The integral closure of $\cI$ equals $[x^2,y^2]$, hence $\mord_z(\cI)=(2,2)$ for any point $z\in Z=V(x^2,y^2,xyz)$, the principalization blows up the center $\cJ=[x^2,y^2]$, whose support is the whole reduction $\oZ=\Spec(k[z])$ of the non-reduced scheme $Z$, so the embedded resolution algorithm does not modify $Z$ at all. Of course, $Z$ is not normally flat along $\oZ$, and even has an embedded component at the origin.
\end{rem}

\section{Non-embedded resolution}\label{nonembsec}
In order to extend non-embedded resolution to excellent schemes which cannot be embedded into regular ones, we will have to extend the theory of centers to such schemes, including the non-normal (or even non-reduced) ones. We will use roundings of integral centers or the closed subschemes corresponding to them which will be called tubes. Such schemes can be defined independently of an embedding and the first two subsections are devoted to studying them.

\subsection{Tubes}\label{tubesec}
Recall that a center $\cJ$ is integral if $\mord(\cJ)\in\Mord$. An integral center $\cJ$ is controlled by the rounding $\cJ_0=(\cJ)$ well enough. Our next goal is to describe it completely in terms of the scheme $V(\cJ_0)$, which is an archetypical example of tubes we introduce below.

\subsubsection{Tubes and valuative filtrations}
In general, a tube can be thought of as a special (in a sense, nicest) nilpotent thickening of a regular scheme.

\begin{defin}
(i) A scheme $V$ is called a {\em split tube} of {\em width} $\ud=(d_1\.d_n)\in\Mord_{>1}$ if there exist functions $t_1\.t_n\in\cO_V$, called {\em nilpotent parameters} of $V$, such that the following conditions are satisfied:

\begin{itemize}
\item[(0)] The ideal $\cN=(t_1\.t_n)$ defines a regular closed subscheme $\oV=V(\cN)$.
\item[(1)] $\ut^{I_\ud}=0$.
\item[(2)] The images of monomials $\ut^\ua$ with $\ua\in\NN^n\setminus I_\ud$ in $\gr_\cN(\cO_V)=\oplus_{i=0}^\infty\cN^i/\cN^{i+1}$ are linearly independent over $\cO_{\oV}$.
\end{itemize}

(ii) A scheme $V$ is a called a {\em tube} if there exists $\ud\in\Mord_{>1}$, called the {\em width} of $V$, such that $V$ is locally a split tube of width $\ud$. We use the notation $\ud=\mord(V)$ because the width can also be interpreted as the tuple of orders of vanishing of the parameters.

(iii) The decreasing {\em valuative filtration} $\cF=\cF_V$ on $\cO_V$ induced by $\ut$ is defined as follows: for $r\in[0,1]$ the filtered piece $\cF_r$ is generated by all monomials $\ut^\ua$ with $\ua\cdot\ud^{-1}\le r$.
\end{defin}

\subsubsection{Basic properties of tubes}
We will later prove that both $\mord(V)$ and $\cF_V$ are independent of the choice of nilpotent parameters, but this is not so simple and will be deduced from analogous properties of weighted centers. We start our study of tubes with recording a few properties which are very simple.

\begin{rem}\label{tuberem}
For any tube $V$ its reduction $\oV$ is regular and $V$ is normally flat along $\oV$. In fact, it suffices to check this locally, when $V$ is split of width $\ud$ with nilpotent parameters $\ut$, and in such a case we can say more:

(i) It follows from condition (1) that $\cN=(\ut)$ is the nilradical of $\cO_V$ and $\oV$ is the reduction of $V$.

(ii) Condition (2) has a simple meaning and can be reformulated as follows. Since the associated graded algebra $\gr_\cN(\cO_V)$ is generated by monomials and $\ut^\ua$ vanish for $\ua\in I_\ud$, the monomials with $\ua\in\NN^n\setminus I_\ud$ give rise to an $\cO_\oV$-basis of $\gr_\cN(\cO_V)$. In particular, each graded piece is a free module on non-zero monomials of appropriate degree, yielding the normal flatness condition. Moreover, the parameters $\ut$ induce an isomorphism of the normal cone $N_{\oV/V}=\Spec_\oV(\gr_\cN(\cO_V))$ with the constant tube $\oV\times T_\ud$, where $T_\ud=\Spec(\QQ[t_1\.t_n]/(\ut^{I_\ud}))$ is the tubular thickening of $\Spec(\QQ)$ of width $\ud$.

(iii) We warn the reader that condition (1) does not follow from an isomorphism $N_{\oV/V}=\oV\times T_\ud$. Existence of appropriate nilpotents in $\cO_V$ is much subtler. In particular, we will see that it is not easy to prove some descent results for this property.

(iv) If $\oV\into V$ admits a retract $V\to\oV$ (e.g. this is the case when $V$ is a variety or a formal variety), then $V$ itself is isomorphic to the constant tube over $\oV$. This condition is still much stronger than just requiring that the normal cone is a constant tube.

(v) By our assumption $d_1\ge 2$ and hence the image of $\ut$ is a basis of the conormal bundle $T^*_{\oV/V}=\cN/\cN^2$ to $\oV$ (our definition of tubes rules out the possibility of an arbitrary width $\ud\in\Mord$ because any $i$ with $d_i=1$ would give rise to a dummy parameter $t_i=0$). It follows that if $s_1\.s_m\in\cO_{V,x}$ is a lifting of a family of regular parameters of $\cO_{\oV,x}$, then the image of $t_1\.t_n,s_1\.s_m$ in the cotangent space $m_{V,x}/m^2_{V,x}$ to $V$ at $x$ is a basis.

(vi) Assume that $V'\to V$ is either a regular morphism or the completion morphism $V'=\Spec(\hatcO_{V,x})\to V$ at $x\in V$. Then $V'$ is a tube, $\mord(V')=\mord(V)$ and the valuative filtration on $\cO_V$ pulls back to the valuative filtration on $\cO_{V'}$, that is, $\cF_{V'}=\cF_V\cO_{V'}$. Indeed, the claim reduces to the trivial check that if $V$ is a split tube with nilpotent coordinates $\ut$, then the same is true for $V'$ and the pullback of $\ut$ to $\cO_{V'}$.
\end{rem}

\subsection{Presentations of tubes in schemes}\label{pressec}
By a {\em tube} in a scheme $Z$ we mean a closed subscheme $V$ which is a tube and has a constant codimension in $Z$. This forces us to make certain equidimensionality assumptions and since we want everything to be compatible with formal completions, from this point and until the end of the paper we blindly assume that $Z$ is formally equidimensional even though this could be weakened at some points (but not in the main statements). We will now study ideals that define tubes and show, in particular, that tubes of a regular $Z$ correspond to roundings of integral centers.

\subsubsection{Formally equidimensional schemes}
Recall that a scheme $Z$ is {\em formally equidimensional} if for any $z\in Z$ the formal completion $\hatZ_z$ is equidimensional. For example, any normal G-scheme is formally equidimensional. For qe schemes this condition just means that $Z$ is excellent and locally equidimensional, see \S\ref{catenarysec}.

\subsubsection{Minimal embeddings}\label{minemb}
For a local ring $(A,m)$ let $e_A=\dim_k(m/m^2)$ denote its {\em embedding dimension} (or dimension of the cotangent space). Assume that $A$ is complete. Then any choice of the field of coefficients $k\into A$ and elements $t_1\.t_e$ whose images form a basis of $m/m^2$ induce a surjection $B=k\llbracket t_1\.t_e\rrbracket\onto A$ and $Z=\Spec(A)\into X=\Spec(B)$ is the minimal closed immersion of $Z$ into a regular complete local variety, which is unique up to an isomorphism. So, we call $Z\into X$ the {\em minimal embedding} of $Z$. Note that $m_B/m_B^2=m/m^2$. Of course, the minimal embedding is of constant codimension if and only if $Z$ is equidimensional.

\subsubsection{Tight tubes}
We say that a closed immersion $Y\into Z$ is {\em tight} at $z\in Y$ if the inequality of the embedding dimensions $e_{Y,z}\le e_{Z,z}$ is an equality. Equivalently, the surjective map of the cotangent spaces $T^*_{Z,z}\onto T^*_{Y,z}$ is an isomorphism. A tube $V\into Z$ is {\em tight} if it is tight at all points. In this case, the map $T^*_{\oV/Z}\onto T^*_{\oV/V}$ of the conormal bundles to $\oV$ is an isomorphism. We will later see that canonical tubes are tight, but for now we consider non-tight tubes too.

\subsubsection{Presentations}
By a {\em presentation} of a tube $V$ of width $\ud=(d_1\.d_n)$ in $Z$ we mean a presentation of the form $V=V_Z(\us,\ut^{I_\ud})$, where $\us,\ut\subset\cO_Z$ and the image of $\ut$ in $\cO_V$ is a family of nilpotent parameters of $V$. In particular, $V\into Z$ possesses presentations locally, and a global presentation exists if and only if $V$ is split.

\begin{rem}
A lift $\ut$ of the family of nilpotent parameters does not have to play any role in generating the ideal $\cI=(\us,\ut^{I_\ud})\subset\cO_Z$ corresponding to $V$. In fact, it can freely happen that $(\ut^{I_\ud})=0$ in $\cO_Z$. However, $\ut$ will be used to describe embeddings and to define tubular blowings up.
\end{rem}

\subsubsection{Tight presentations}
We say that a presentation of a tube $V$ in $Z$ is {\em tight}, if it does not include $s$-generators, that is, $V=V_Z(\ut^{I_\ud})$.

\begin{lem}\label{tubelem0}
Let $Z$ be a scheme, $V$ a split tube of width $\ud$ in $Z$ given by an ideal $\cI\subset\cO_Z$ and $\ut\subset\cO_Z$ a tuple mapped to a family of nilpotent parameters of $V$. Then $\ut^{I_\ud}\subset\cI$ and a tuple $\us\subset\cI$ gives rise to a presentation $\cI=(\us,\ut^{I_\ud})$ of $V$ in $Z$ if and only if the image of $\us$ generates $K=\Ker(T^*_{\oV/Z}\onto T^*_{\oV/V})$. In particular, locally at $z\in V$ the minimal size of $\us$ in a presentation of $V$ equals $\dim_{k(z)}(K)=e_{Z,z}-e_{V,z}$, and $V$ possesses a tight presentation $V=V_Z(\ut^{I_\ud})$ if and only if $V$ is tight.
\end{lem}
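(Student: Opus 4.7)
The containment $\ut^{I_\ud}\subset\cI$ is immediate from condition (1) of the tube definition, since $\cI$ is the kernel of $\cO_Z\onto\cO_V$ and the monomials $\ut^\ua$ with $\ua\in I_\ud$ vanish in $\cO_V$. The remainder of the proof will be local at $z\in V$, so I would set $A=\cO_{Z,z}$ and let $J\subset J'\subset A$ be the stalks at $z$ of $\cI$ and of the ideal $\cI'$ of $\oV$ in $Z$. A key preliminary observation is that every $\ua\in I_\ud$ satisfies $|\ua|\ge 2$, since $d_i\ge d_1\ge 2$ forces $1/d_i<1$ and excludes unit vectors; consequently $\ut^{I_\ud}\subset(\ut)^2\subset(J')^2$, so the classes of $\ut^{I_\ud}$ vanish in $T^*_{\oV/Z}=J'/(J')^2$. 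The conormal sequence $J/J'J\to J'/(J')^2\to\cN/\cN^2\to 0$, which is split on the right since $\ut$ lifts an $\cO_\oV$-basis of $\cN/\cN^2$ by Remark~\ref{tuberem}(v), identifies $K$ with the image of $J$ in $J'/(J')^2$. This already yields the ``only if'' direction: if $\cI=(\us,\ut^{I_\ud})$ then the image of $J$ in $J'/(J')^2$ coincides with the $\cO_\oV$-span of the classes of $\us$ alone, and this image equals $K$.

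For the ``if'' direction, suppose $\us$ generates $K$ and set $\cK=(\us,\ut^{I_\ud})$, $B=\cO_Z/\cK$; the goal is $\cK=\cI$. Applying Nakayama to $J'$ as an $A$-module via the split conormal sequence tensored with $k(z)$, the hypothesis on $\us$ together with the fact that $\ut$ spans $\cN/\mathfrak{m}_V\cN$ implies that $\us\cup\ut$ generates $J'$; in particular $B/(\bar\ut)=A/(\us,\ut)=A/J'=\cO_\oV$ is regular. I would then verify that $B$ itself is a split tube of width $\ud$ with nilpotent parameters the images of $\ut$. Conditions (0) and (1) are built into the construction, and for condition (2) I would use the surjection $\bar q\:B\onto\cO_V$: any relation $\sum\bar b_\ua\bar\ut^\ua\in(\bar\ut)^{k+1}$ in $B$ with $|\ua|=k$ and $\ua\notin I_\ud$ lifts to $\sum b_\ua\ut^\ua\in(\ut)^{k+1}+\cK$ in $A$, and applying $\bar q$ together with condition (2) for $\cO_V$ forces $\bar b_\ua\in\cN$, i.e.\ $\bar b_\ua=0$ in $\cO_\oV$.

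Once $B$ is known to be a tube of the same width and same reduction as $V$, the induced surjection $\gr_{(\bar\ut)}(B)\onto\gr_\cN(\cO_V)$ sends the free $\cO_\oV$-basis $\{\bar\ut^\ua\}_{\ua\notin I_\ud}$ to itself and is therefore an isomorphism. Both filtrations vanish for $k\ge d_n$, since $|\ua|\ge d_n$ forces $\ua\cdot\ud^{-1}\ge|\ua|/d_n\ge 1$ and hence $\ua\in I_\ud$; a finite induction on $k$ via the five lemma applied to $0\to(\bar\ut)^k/(\bar\ut)^{k+1}\to B/(\bar\ut)^{k+1}\to B/(\bar\ut)^k\to 0$ (and its analogue for $\cO_V$) then upgrades the isomorphism on gradeds to $B\cong\cO_V$, giving $\cK=\cI$. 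The ``in particular'' clauses follow formally: by Nakayama the minimal number of elements in $\us$ needed to generate $K$ at $z$ equals $\dim_{k(z)}K\otimes_{\cO_\oV}k(z)=(e_{Z,z}-e_{\oV,z})-n=e_{Z,z}-e_{V,z}$, using Remark~\ref{tuberem}(v) for $e_{V,z}-e_{\oV,z}=n$, and a tight presentation exists iff $K=0$ at every point iff $V$ is tight. I expect the main technical obstacle to be the verification of condition (2) for $B$ in the ``if'' direction, where one must carefully descend the free $\cO_\oV$-basis structure of $\gr_\cN(\cO_V)$ across $\bar q$ even though $B$ is cut out by fewer relations than $\cO_V$ a priori.
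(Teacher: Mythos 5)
Your proof is correct, and for the substantive (``if'') direction it takes a genuinely different route from the paper. The paper first reduces to the tight case by replacing $Z$ with $V_Z(\us)$, then passes to the formal completion at $z$, chooses a minimal embedding into a regular local scheme, lifts $\ut$ to a partial regular system of parameters there, and compares $\cO_V$ with the model tube cut out by $\ut'^{I_\ud}$ via ranks of the free graded $\cO_\oV$-modules. You instead stay inside $\cO_{Z,z}$ and show directly that $B=\cO_{Z,z}/(\us,\ut^{I_\ud})$ is itself a split tube of width $\ud$ with parameters $\bar\ut$ and the same reduction: condition (0) comes from your Nakayama argument giving $(\us,\ut)=\cI_{\oV,z}$, condition (1) is built in, and condition (2) descends along the surjection $B\onto\cO_V$ exactly as you say, since a homogeneous relation in $\gr_{(\bar\ut)}(B)$ pushes to one in $\gr_\cN(\cO_V)$ and $\Ker(B\to\cO_\oV)=(\bar\ut)$. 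The concluding step --- surjection on each graded piece sending the monomial basis to the monomial basis, finite filtration (terminating at $k\ge d_n$), five lemma --- is essentially the same finiteness mechanism the paper uses, but your argument avoids completion and minimal embeddings altogether, which is arguably more elementary and self-contained; what the paper's route buys is that it recycles the embedded picture that the rest of the section relies on anyway. Two minor remarks: in the dimension count you silently use that $\dim_{k(z)}T^*_{\oV/Z}\otimes k(z)=e_{Z,z}-e_{\oV,z}$, i.e.\ $\cI_{\oV,z}\cap m_z^2=m_z\cI_{\oV,z}$; this is true because $\oV$ is regular (the syzygies of a regular system of parameters of $\cO_{\oV,z}$ are Koszul), but it deserves a line --- the paper leaves the whole ``in particular'' clause implicit, so you are at the same level of rigor there. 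Also, like the paper, you only verify the equality of ideals at points of $V$; literal equality of sheaves on all of $Z$ can fail away from $V$ (the ideal $(\us,\ut^{I_\ud})$ may acquire extra components), so the statement should be read locally along $V$, a convention shared with the paper's own proof.
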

\begin{proof}
The set $\ut^{I_\ud}$ lies in $\cI$ because its image in $\cO_V$ vanishes. In addition, its image in $T^*_{\oV/Z}$ vanishes because $d_i>1$, and hence for any presentation we have that the image of $\us$ generates $K$. Conversely, assume that $K$ is generated by the image of $\us\subset\cI$ and let us prove that $\cI=(\us,\ut^{I_\ud})$. Since $V$ is a tight tube in $Z'=V_Z(\us)$, it suffices to prove that $\ut$ gives rise to a tight presentation of $V$ in $Z'$. So, we are reduced to the tight case and to simplify notation we assume in the sequel that $V$ is tight in $Z$.

With the new assumption we should prove that the closed immersion $V\into V'=V_Z(\ut^{I_\ud})$ is an isomorphism. This can be checked after the formal completion at $z\in V$, hence we can assume that $Z$ and $V$ are spectra of complete local rings. Then $Z$ possesses a minimal embedding $Z\into X=\Spec(A)$ and $\ut$ lifts to a partial family of regular parameters $\ut'\subset A$. Since $V'\into V''=V_X(\ut'^{I_\ud})$, it suffices to prove that $V=V''$. The closed immersion of regular local schemes $\oV\into V_X(\ut)$ is an isomorphism because the dimensions of both cotangent spaces are equal to $e_{Z,x}-n=\dim(X)-n$, where $n$ is the length of $\ud$. Recall that the rank of the free $\cO_\oV$-module $\gr_\cN(\cO_V)$ equals the number of elements in the set $\NN^n\setminus I_\ud$, and clearly $\gr_{\cN''}(\cO_{V''})$ is a free $\cO_\oV$-module of the same dimension (with a basis given by the images of monomials $\ut'^\ua$ with $\ua\in\NN^n\setminus I_\ud$). Therefore, the surjection $\cO_{V''}\onto\cO_V$ is an isomorphism.
\end{proof}

\subsubsection{Relation to integral centers}
In the particular case when $Z$ is regular we can interpret tubes as integral centers.

\begin{lem}\label{tubelem}
Let $X$ be a regular scheme, then

(i) There is a natural bijection between tubes of $X$ and integral centers $\cJ$, and opposite bijections are given by associating to a tube $V=V_X(\cI_V)$ the canonical center $\cJ=\cJ(\cI_V)$ and associating to a center $\cJ$ the vanishing locus $V=V_X(\cI)$ of its rounding $\cI=(\cJ)$. In particular, a closed subscheme $V$ is a tube if and only if its ideal is the rounding of an integral center.

(ii) Given a closed subscheme $Z\into X$ that corresponds to an ideal $\cI_Z$, the correspondence from (i) induces the natural bijection between tubes of $Z$ and $\cI_Z$-admissible integral centers of $X$.

(iii) If $V$ corresponds to $\cJ$ in (i), then $\mord(V)=\mord(\cJ)_{>1}$.
\end{lem}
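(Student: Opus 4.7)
The plan is to establish (i) by constructing inverse maps locally and then gluing via Theorem~\ref{maxcenterth}; parts~(ii) and~(iii) will follow as bookkeeping. For the direction from integral centers to tubes, fix an integral center $\cJ$ on $X$. Working locally I write $\mord(\cJ)=(1\.1,d_m\.d_n)$ with $d_m\ge 2$ and choose a presentation $\cJ=[\us,\ut^{\ud'}]$, where $\us=(t_1\.t_{m-1})$ carries exponents~$1$ and $\ut=(t_m\.t_n)$ carries exponents $\ud'=(d_m\.d_n)\in\Mord_{>1}$. Any monomial involving some $s_j$ already lies in $(\us)$, so the rounding is $(\cJ)=(\us,\ut^{I_{\ud'}})$; setting $V=V_X((\cJ))$ and $X'=V_X(\us)$ I identify $V=V_{X'}(\ut^{I_{\ud'}})$. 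The three tube axioms are then immediate: axiom~(1) holds by construction, the reduction $\oV=V_X(\us,\ut)$ is regular because $(\us,\ut)$ is a partial family of regular parameters, and $\gr_\cN(\cO_V)$ equals the quotient of the polynomial ring $\cO_\oV[T_m\.T_n]=\gr_{(\ut)}(\cO_{X'})$ by the monomial ideal generated by $T^{I_{\ud'}}$, which is free over $\cO_\oV$ on the monomials $T^\ua$ with $\ua\in\NN^{n-m+1}\setminus I_{\ud'}$. This construction reads off $\mord(V)=\ud'=\mord(\cJ)_{>1}$, which handles~(iii).

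For the reverse direction I start with a tube $V\into X$ and put $\cJ=\cJ(\cI_V)$. Working locally I take $V$ split of width $\ud=(d_1\.d_n)\in\Mord_{>1}$ with nilpotent parameters $\ut$, lift $\ut$ to $\cO_X$, and apply Lemma~\ref{tubelem0} to obtain $\cI_V=(\us,\ut^{I_\ud})$, where $\us$ projects to a generating set of $\Ker(T^*_{\oV/X}\onto T^*_{\oV/V})$. Because $\ut$ projects to a basis of $T^*_{\oV/V}$ by Remark~\ref{tuberem}(v), the combined tuple $(\us,\ut)$ projects to a basis of the conormal bundle $T^*_{\oV/X}$; combined with the regularity of $X$ and of $\oV$ and the dimension identity $|\us|+n=e_{X,z}-e_{\oV,z}$ at each point $z$, this forces $(\us,\ut)$ to be a partial family of regular parameters on $X$ cutting out $\oV$. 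Hence $\cJ_0=[\us,\ut^\ud]$ is a well-defined integral center on $X$: its multiorder $(1\.1,d_1\.d_n)$ lies in $\Mord$ because the initial $1$'s are witnessed by unit standard basis tuples and the remaining positions use the hypothesis $\ud\in\Mord_{>1}$. A direct inspection of monomials of $\nu_{\cJ_0}$-value at least~$1$ gives $(\cJ_0)=(\us,\ut^{I_\ud})=\cI_V$, and Theorem~\ref{maxcenterth} then identifies $\cJ_0$ with $\cJ(\cI_V)$. Thus the two maps are mutually inverse locally, and since the canonical center of an ideal is itself a canonical global object, the local bijection glues to the global one, yielding~(i).

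Claim~(ii) then amounts to the tautology that $V\subseteq Z$ if and only if $\cI_Z\subseteq\cI_V$, which via the identification $\cI_V=(\cJ)$ is exactly the $\cI_Z$-admissibility of $\cJ$ as a $\QQ$-ideal. The main subtlety I expect to navigate is that a tube $V\into X$ need not be tight in $X$: this asymmetry is precisely what forces the leading $1$'s in $\mord(\cJ)$ and the appearance of the $\us$-part in the presentation of $\cI_V$. Getting the dimension count and the $\Mord$-membership correct in the non-tight case is the crux of the reverse construction; once established, the rest is canonical and glues with no further work.
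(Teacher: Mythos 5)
Your proposal is correct and takes essentially the same route as the paper: work locally with a presentation $\cJ=[\us,\ut^\ud]$, compute the rounding as the monomial ideal $(\us,\ut^{I_\ud})$, use Lemma~\ref{tubelem0} plus the codimension count to see that $(\us,\ut)$ is a regular family of parameters along $\oV$ in the converse direction, invoke Theorem~\ref{maxcenterth} to identify the constructed center with the canonical center of the tube's ideal, and reduce (ii) to the equivalence of $\cI_Z$-admissibility with containment in the rounding. The additional details you supply (explicit verification of the tube axioms via the associated graded, the explicit $\Mord$-membership check, and the conormal-bundle dimension identity) are simply fleshed-out versions of steps the paper asserts.
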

\begin{proof}
It suffices to prove the claims locally, so we can freely assume that the tubes are split and the centers possess global presentations. Assume first that $\cJ$ is a center, choose a presentation and denote it $\cJ=[\us,\ut^\ud]$, where $\ud\in\Mord_{>1}$ and $s_1\.s_m,t_1\.t_n$ is a family of regular parameters along $V(\cJ)$. Then $\cI=(\cJ)$ equals $(\us,\ut^{I_\ud})$ and hence $V=V(\cI)$ is a tube of width $\ud$ and the image of $\ut$ is a family of nilpotent parameters of $V$. The canonical center of $\cI$ is $\cJ$ itself by Theorem~\ref{maxcenterth}.

Conversely, assume that $V$ is a tube of width $\ud=(d_1\.d_n)$ in $X$. By Lemma~\ref{tubelem0} locally one can present $V$ as $V=V_X(s_1\.s_m,\ut^{I_\ud})$, where $m=\dim(X)-\dim(\oV)-n$. Since $\oV=V_X(\us,\ut)$ is of codimension $m+n$ in $X$, the tuple $(\us,\ut)$ is a regular family of parameters along $\oV$, and therefore $\cJ=[\us,\ut^\ud]$ is an integral center such that $(\cJ)=(\us,\ut^{I_\ud})$ is the ideal of $V$ and $\mord(\cJ)=(1\.1,\ud)$.

The above correspondences establishes (i) and (iii). The claim (ii) follows too because $\cI_Z\subseteq\cJ$ if and only if $\cI_Z\subseteq(\cJ)$, which means that $V\into Z$.
\end{proof}

\begin{rem}\label{codimrem}
The lemma provides a justification for the notion of tubes we have introduced. In particular, it clarifies the requirement that a tube has constant codimension in $Z$ -- otherwise it would not define a center when $Z$ is regular, since the multiorder is constant along a center. Furthermore, for this reason we will have to only consider closed embeddings of constant codimensions $Z\into X$ into regular schemes, and this will restrict non-embedded resolution to formally equidimensional schemes.
\end{rem}

\subsubsection{Independence of coordinates}
The above results provide a useful tool for deducing various properties of tubes from those of weighted centers -- reduce by descent to the case of a complete local ring and then use a minimal embedding and the interpretation as a center. First of all, let us prove independence of coordinates of basic invariants.

\begin{cor}\label{tubecor}
Let $V$ be a tube, then

(i) Width and valuative filtration of $V$ are independent of choices of nilpotent parameters (in the split case), and hence are unique and completely determined by the underlying scheme.

(ii) Let $\ud=(d_1\.d_n)=\mord(V)$ and let $\oV$ denote the reduction of $V$. Then elements $t_1\.t_n\in\cO_V$ are nilpotent parameters of $V$ if and only if their images form a basis of $T^*_{\oV/V}$ over $\cO_\oV$ and $t_i\in\cF_{d_i^{-1}}$ for $1\le i\le n$.
\end{cor}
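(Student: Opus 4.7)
The plan is to reduce both claims to uniqueness properties of integral centers in regular schemes, via the correspondence of Lemma~\ref{tubelem}. Both statements are local on $V$, so we may assume $V$ is split with a chosen system of nilpotent parameters $\ut$. By Remark~\ref{tuberem}(vi) the formal completion at any point $x \in V$ preserves both the width and the valuative filtration, and pulls back nilpotent parameters to nilpotent parameters, so we may further assume $V = \Spec(A)$ with $A$ a complete local ring. Embed $V$ minimally into the regular formal variety $X := \Spec(k\llbracket s_1 \. s_e \rrbracket)$ as in \S\ref{minemb}; this embedding is tight at the closed point by construction.

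The argument in the proof of Lemma~\ref{tubelem0} shows that any nilpotent parameters $\ut$ of $V$ lift to a partial family of regular parameters $\widetilde\ut \subset \cO_X$ such that $V = V_X(\widetilde\ut^{I_\ud})$. By Lemma~\ref{tubelem}, $\cI_V \cO_X$ is then the rounding of the integral center $\cJ := [\widetilde\ut^\ud]$, and this center is the canonical center of its rounding by Theorem~\ref{maxcenterth}. Since $X$ and the ideal $\cI_V \cO_X$ are canonically attached to $V$, so is $\cJ$, and hence $\mord(V) = \mord(\cJ)_{>1}$ is uniquely determined by Lemma~\ref{tubelem}(iii). Moreover, condition (2) in the definition of split tubes identifies the images of the monomials $\widetilde\ut^\ua$ with $\ua \in \NN^n \setminus I_\ud$ as an $\cO_\oV$-basis of $\cO_V$ of $\nu_\cJ$-weight $\ua \cdot \ud^{-1}$, so $\cF_V$ matches the pushforward to $\cO_V$ of the $\nu_\cJ$-filtration on $\cO_X$, which is intrinsic to $\cJ$. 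This proves (i).

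For (ii), the ``only if'' direction is immediate from the above description together with Remark~\ref{tuberem}(v). For the ``if'' direction, suppose $t'_1 \. t'_n \in \cO_V$ satisfy the two stated conditions and lift them to $\widetilde\ut' \subset \cO_X$. The basis condition together with tightness of $V \into X$ forces the images of $\widetilde\ut'$ in the conormal $T^*_{\oV/X}$ to form a basis, so $\widetilde\ut'$ is a partial family of regular parameters of $X$. The filtration condition $t'_i \in \cF_{d_i^{-1}}$ translates via the identification above into $\widetilde t'_i \in \cJ^{1/d_i}$ modulo an error in $\cI_V \cO_X$; since $\cI_V \cO_X = (\cJ) \subseteq \cJ \subseteq \cJ^{1/d_i}$ (using $d_i > 1$), the lift itself already lies in $\cJ^{1/d_i}$. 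An application of \cite[Corollary~2.3.12]{dream_derivations} then yields $\cJ = [\widetilde\ut'^\ud]$, and Lemma~\ref{tubelem} identifies $\ut'$ as a system of nilpotent parameters of $V$.

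The main delicate point is the precise identification of $\cF_V$, defined combinatorially via a chosen $\ut$, with the pushforward of the intrinsic $\nu_\cJ$-filtration on $\cO_X$. Once this identification is established, both parts follow formally from the already established uniqueness and coordinate-change properties of integral centers in regular schemes, and I do not anticipate further substantial obstacles.
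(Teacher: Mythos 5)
Your treatment of the complete local case is essentially the paper's own argument: minimal embedding, the correspondence of Lemma~\ref{tubelem}/\ref{tubelem0} with an integral center $\cJ$, uniqueness of the canonical center of the common rounding (Theorem~\ref{maxcenterth}), and the coordinate-change criterion from \cite{dream_derivations} to recognize new parameters. The problem is your opening reduction ``so we may further assume $V=\Spec(A)$ with $A$ complete.'' For (i) this is harmless (if two systems of parameters gave different widths or filtrations, faithful flatness of $A\to\widehat{A}$ would detect the discrepancy after completion), but for the ``if'' direction of (ii) the conclusion you must prove lives downstairs: that $\ut$ is a family of nilpotent parameters of $V$ itself. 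Remark~\ref{tuberem}(vi) only says that nilpotent parameters pull back to nilpotent parameters of $\widehat{V}$; what you need is the converse, i.e.\ a descent of this property along the completion, and your proposal never supplies it -- note that in general such descent is delicate enough that the paper proves Theorem~\ref{tubedescent} via Popescu for it. In the present situation it is easier because the candidate parameters already live on $V$, but it still requires an argument, and this is exactly the paragraph the paper does not skip: the inclusions $t_i\in\cF_{d_i^{-1}}$ already force $\ut^{I_\ud}=0$ in $\cO_V$ (each such monomial lies in the ideal generated by monomials of weight at least one in the old parameters, which vanish), $(\ut)$ is the nilradical by Nakayama, and the remaining condition (2) -- linear independence over $\cO_\oV$ of the monomials $\ut^\ua$, $\ua\in\NN^n\setminus I_\ud$, in $\gr_\cN(\cO_V)$ -- is descended from the completion using $\gr_\cN(A)\otimes_{B}\widehat{B}\cong\gr_{\widehat\cN}(\widehat{A})$ with $B=A/\cN$ and faithful flatness of $B\to\widehat{B}$. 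Without this (or a direct substitute, e.g.\ comparing the surjection from the free $\cO_\oV$-module on the monomials outside $I_\ud$ onto $\gr_\cN(\cO_V)$, both of the same rank), part (ii) is not proved on $V$.

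Two smaller points. What you single out as the ``main delicate point'' -- identifying $\cF_V$ with the image of the $\nu_\cJ$-filtration -- is actually the easy step, since both are images of the same monomially generated ideals; the omitted completion descent is where the real work lies. And condition (2) gives a basis of $\gr_\cN(\cO_V)$, not of $\cO_V$; in the complete case freeness of $\cO_V$ on these monomials does hold via the retract of Remark~\ref{tuberem}(iv), but as written your assertion overstates what (2) says (and is not needed for the filtration identification anyway).
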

\begin{proof}
Let us assume first that $V=\Spec(A)$ for a complete local ring $A$, and choose a minimal embedding $V\into X$. Then by Lemma~\ref{tubelem} $V$ corresponds to a center $\cJ$ on $X$, and its width and valuative filtration are determined/induced by the multiorder and the valuation function of $\cJ$. In addition, the condition on $\ut$ in (ii) is equivalent to the condition that a lift $\ut'\in A$ of $\ut$ is a family of regular parameters along $\oV$ and $\nu_\cJ(t_i)\ge d_i^{-1}$. By \cite[Corollary~2.3.11(ii)]{dream_derivations} the latter happens if and only if $\cJ=[\ut'^\ud]$, which is equivalent to $\ut$ being a family of nilpotent parameters of $V$.

Consider now the general case. It suffices to check both claims locally, so we can assume that $V=\Spec(A)$ for a local ring $A$. If (i) fails and the width or the valuation filtration differ for two different choices of nilpotent parameters, then the same is true after the completion, contradicting the case we have already established. To prove (ii) we note that the direct implication is obvious, so let us prove the opposite one. The inclusions $t_i\in\cF_{d_i^{-1}}$ imply that $\ut^{I_\ud}=0$. In addition, the images of $\ut^\ua$ with $\ua\in\NN^d\setminus I_d$ form a basis of $\gr_\cN(A)$ over $B=A/\cN$ because their images form a basis of $\gr_{\widehat\cN}(\hatA)$ over $\hatB=\hatA/\hatN$ by the already established case of $\hatV$, and one has that $$\gr_\cN(A)\otimes_B\hatB=\gr_{\widehat\cN}(\hatA).$$
\end{proof}

\subsubsection{Stacky tubes}
As another consequence we can now prove that being a tube is a smooth-local property, and hence the notion of tubes naturally extend to stacks: a stack $V$ is called a tube if it possesses a smooth presentation $V_0\to V$ with $V_0$ a tube.

\begin{cor}\label{tubesmooth}
Assume that  $V'$ is a tube of width $\ud$ and $f\:V'\to V$ is a surjective smooth morphism. Then $V$ is a tube of width $\ud$.
\end{cor}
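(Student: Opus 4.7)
The plan is to deduce the corollary from the general descent of tubes along regular morphisms (Theorem~\ref{tubedescent}), observing that smooth morphisms are automatically regular. We sketch the direct argument along the lines of the descent strategy used elsewhere in the paper.

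Work locally: assume $V=\Spec(A)$ and $V'=\Spec(B)$ are local, and fix a preimage $v'\in V'$ of $v\in V$. Since smoothness in characteristic zero preserves reducedness under base change (smooth morphisms are flat with geometrically regular fibers), $B/\cN B$ is reduced, so the nilradical of $B$ equals $\cN B$, where $\cN$ denotes the nilradical of $A$. Consequently the induced map $\oV'=\oV\times_V V'\to\oV$ is smooth surjective, whence $\oV$ is regular by smooth descent of regularity. Since $\cN'/\cN'^2=(\cN/\cN^2)\otimes_{\cO_\oV}\cO_{\oV'}$, faithfully flat descent gives that $\cN/\cN^2$ is locally free of rank $n$.

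The heart of the argument is producing nilpotent parameters $\us\subset\cN$ of width $\ud$. By Corollary~\ref{tubecor}(ii) applied to the tube $V'$, it suffices to choose $\us$ lifting a basis of $\cN/\cN^2$ and satisfying $f^*(s_i)\in\cF_{V'}^{1/d_i}$; then $f^*(\us)$ is a nilpotent parameter system of $V'$. To arrange the filtration condition on the level of $V$, we exploit the intrinsicality of $\cF_{V'}$ from Corollary~\ref{tubecor}(i): the two pullbacks of $\cF_{V'}^r$ along the projections $V'\times_V V'\rightrightarrows V'$ must coincide with the intrinsic filtration on $V'\times_V V'$, which is itself a tube by Remark~\ref{tuberem}(vi) applied to either smooth projection. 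Faithfully flat descent then yields an $\cO_V$-ideal $\cF_V^r$ with $\cF_V^r\cdot B=\cF_{V'}^r$, and the induced weight decomposition of $\cN/\cN^2$ has graded pieces of the ranks prescribed by $\ud$. Choose a basis of $\cN/\cN^2$ respecting this grading and lift it to $\us\subset\cN$ with $s_i\in\cF_V^{1/d_i}$. The two tube conditions $\us^{I_\ud}=0$ and linear independence of the monomials $\us^\ua$ for $\ua\notin I_\ud$ in $\gr_\cN(\cO_V)$ then transfer from the corresponding statements for $f^*(\us)$ on $V'$, via injectivity of $f^*$ and faithfully flat descent of linear independence.

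The main obstacle is the construction of $\us$: an arbitrary lift of a basis of $\cN/\cN^2$ need not satisfy $\us^{I_\ud}=0$, as this condition is sensitive to the filtered position of each $s_i$ rather than merely to its class modulo $\cN^2$. What makes descent possible is precisely that the valuative filtration on $V'$ is intrinsic, so it descends to an $\cO_V$-ideal and pins down the correct filtered lifts.
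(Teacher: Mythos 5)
Your argument is correct and is essentially the paper's own proof: you descend the intrinsic valuative filtration through the tube $V'\times_V V'$ by fppf descent (using Corollary~\ref{tubecor}(i) and Remark~\ref{tuberem}(vi) for the two projections), choose filtered lifts of a basis of $\cN/\cN^2$, and apply Corollary~\ref{tubecor}(ii) to see that their pullbacks are nilpotent parameters on $V'$, after which the relations $\ut^{I_\ud}=0$ and the linear independence in $\gr_\cN(\cO_V)$ descend by faithful flatness. One caveat: drop the opening suggestion of deducing the statement from Theorem~\ref{tubedescent} --- in the paper that theorem is proved afterwards and its proof (via Popescu's theorem) invokes this very corollary, so that route would be circular; your direct argument, which never uses it, is the right one.
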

\begin{proof}
Note that $V''=V'\times_VV'$ is a tube and by Corollary \ref{tubecor}(i) the valuative filtration $\cF'$ on $V'$ pulls backs to that of $V''$ under both projections $V''\to V'$. By flat descent, $\cF'$ is the pullback of a filtration $\cF$ on $\cO_V$. Also, by the flatness of $V'\to V$, the filtration induced by $\cF$ on $T^*_{\oV/V}=\cN/\cN^2$ pulls back to the filtration $\cF'$ induces on $T^*_{\oV'/V'}$. Any family of nilpotent parameters $\ut'$ on $V'$ is mapped to a basis of $T^*_{\oV'/V'}$ and satisfies $t'_i\in\cF'_{d_i^{-1}}$ for $1\le i \le n$. Therefore there exists a tuple $\ut$ of nilpotents on $V$ which is mapped to a basis of $T^*_{\oV/V}$ and satisfies $t_i\in\cF_{d_i^{-1}}$ for $1\le i \le n$. By Corollary~\ref{tubecor}(ii) the pullback of $\ut$ is a family of nilpotent parameters on $V'$, hence $\ut^{I_\ud}=0$ already on $V$. This implies that $V$ is a tube with $\ut$ a family of nilpotent parameters.
\end{proof}

\subsubsection{Regular descent}
To establish algebraization we will also need the stronger descent result when $V'\to V$ is regular, or at least a completion morphism of a $G$-scheme. In this case, the fiber product is non-noetherian, so another argument is needed. The goal is to construct nilpotent parameters downstairs. The conditions $\ut^\ua=0$ with $\ua\in I_\ud$ are not that easy to achieve and the argument should use the $G$-condition. One way to do this is via Artin's approximation. We prefer instead of this to consider the general case when $V'\to V$ is regular and use Popescu's theorem. It seems certain that the latter can be avoided by an argument imitating the work with integral closures in the case of centers and roundings, but this would probably require a few more pages of technical work.

\begin{theor}\label{tubedescent}
Assume that $V'$ is a tube of width $\ud$ and $f\:V'\to V$ is a surjective regular morphism. Then $V$ is a tube of width $\ud$.
\end{theor}
\begin{proof}
The filtrations induced on $\cO_V$ and $\cO_{V'}$ by the nilradicals are compatible by the regularity of $f$. Since $V'$ has regular reduction $\oV'$ along which it is normally flat, the same is true for $V$. We can work locally on $V$, hence we can assume that $\gr_\cN(\cO_V)$ is a free $\cO_V$-module. Computing on $V'$ we see that its rank $r$ equals the cardinality of the set $\NN^n\setminus I_\ud$.

By Popescu's theorem, see \cite{Popescu} or \cite{Spivakovsky}, $V'$ is the filtered limit of smooth $V$-schemes $V_i$. Let $\cN_i$ and $\oV_i$ denote the nilradical and the reduction of $V_i$, respectively. Then $\gr_{\cN_i}(\cO_{V_i})$ is a free $\cO_{\oV_i}$-module of rank $r$ by the regularity of $V_i$ over $V$. Let $\ud=\mord(V')$ and let $t'_1\.t'_n$ be a family of nilpotent parameters of $V'$. Then $\ut'$ comes from a tuple $\ut\in\cO_{V_i}$ for a large enough $i$. Moreover, enlarging $i$ we can achieve that $\ut$ satisfy the same relations as $\ut'$, namely, $\ut^\ua=0$ for any $\ua\in I_\ud$. In the same manner, enlarging $i$ we can assume that the image of $\ut$ is a basis of $\cN_i/\cN_i^2$, and hence $\cN_i=(\ut)$. Finally, the images of the monomial $\ut^\ua$ with $\ua\in \NN^n\setminus I_\ud$ are linearly independent in $\gr_{\cN_i}(\cO_{V_i})$ because their images in $\gr_{\cN}(\cO_{V})$ are linearly independent. Thus, $V_i$ is a tube and it remains to use Corollary~\ref{tubesmooth}.
\end{proof}

\subsection{Canonical tubes}
The notion of a (formally) canonical tube of a formally equidimensional scheme $Z$ is defined precisely as the notion of a (formally) canonical center of an ideal $\cI$ on a regular scheme $X$, see \cite[\S3.3]{dream_derivations} and \S\ref{formcan}, with the only subtlety that we require the tube to be tight. Of course, in the case of $Z=V_X(\cI)$ the definitions are equivalent due to the dictionary of Lemma~\ref{tubelem}(i), and the tightness assumption is needed to control the initial ones in the multiorder. For the sake of completeness we spell out the definitions quickly.

\subsubsection{The invariant}\label{tubeinvar}
For a tight tube $V$ in $Z$ of width $\ud=(d_1\.d_n)=\mord(V)$ we define the {\em invariant} of $V$ in $Z$ to be $\inv(V)=(\ud,|V|)$ ordered lexicographically. As with invariants of centers, the last entry is a closed set and the partial order on closed sets is given by inclusion.

\subsubsection{Canonical stratification}
Let $Z$ be a scheme (or a stack). A tube $V$ of $Z$ is called {\em maximal} if it is tight and has maximal possible invariant: any other tight tube $V'\neq V$ in $Z$ satisfies $\inv(V')<\inv(V)$. If, furthermore, for any open $U\subseteq Z$ with $V\cap U\neq\emptyset$ one has that $V_U=V\times_ZU$ is the maximal tube of $U$, then $V$ is called the {\em canonical tube} of $Z$. Finally, we say that $Z$ possesses a {\em canonical stratification} if for any point $z\in Z$ there exists a neighborhood $U$ which possesses a canonical tube $V_U$ and $z\in V_U$. In this case the upper semicontinuous {\em width function} $\mord_Z\:Z\to\Mord$ arises.

\subsubsection{The embedded case}
If $Z$ embeds into a regular scheme, then using the correspondence between tubes and centers established in Lemma~\ref{tubelem} we can reduce studying canonical tubes to the already established theory of canonical centers.

\begin{lem}\label{embedded}
Let $X$ be a regular scheme and $Z=V_X(\cI)$ a closed subscheme of constant codimension, then

(i) A tube $V$ of $Z$ is canonical if and only if the corresponding integral $\cI$-admissible center $\cJ$ on $X$ is canonical.

(ii) If $X$ is excellent, then the canonical stratification of $\cI$ and the multiorder function $\mord_\cI$ induce the canonical stratification by tubes and the width function on $Z$.
\end{lem}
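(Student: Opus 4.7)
The plan is to reduce both parts to the bijection of Lemma~\ref{tubelem}(ii) between tubes $V$ in $Z$ and $\cI$-admissible integral centers $\cJ$ on $X$, combined with Theorem~\ref{canonicalcenter} for $\cI$ on the excellent regular scheme $X$. Under this bijection, $V=V(\cJ)$ set-theoretically and $\mord(V)=\mord(\cJ)_{>1}$ by Lemma~\ref{tubelem}(iii), so the invariant $(\mord(V),|V|)$ of a tube records the ``tail'' of $\mord(\cJ)$ together with the support, while the initial string of ones in $\mord(\cJ)$ remains unconstrained by $\inv(V)$.

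The core work in (i) is to match canonicity across this bijection. First I would establish the following local characterization: a tube $V$ in $Z$ is tight if and only if the corresponding center $\cJ$ has the minimum possible number of initial ones in $\mord(\cJ)$ consistent with the admissibility constraint $\cI\subseteq\cJ$. The verification is formal-local: passing to $\hatZ_z$ and choosing a minimal embedding into a regular complete local scheme, I would combine Lemma~\ref{tubelem0} (which identifies the $\us$-part of a presentation of $V$ in $Z$ with $\Ker(T^*_{\oV/Z}\onto T^*_{\oV/V})$, of dimension $e_{Z,z}-e_{V,z}$) with the tightness hypothesis $e_{V,z}=e_{Z,z}$, and then track how generators of $\cI$ contribute to a presentation $\cJ=[\us,\ut^\ud]$, some being absorbed into the nilpotent structure by the integrality condition and Theorem~\ref{maxcenterth}.

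Granting this characterization, since the lex order strictly prefers fewer initial ones, the canonical center of $\cI$ necessarily corresponds to a tight tube. Conversely, among tight tubes the initial-ones count of $\mord(\cJ)$ is determined locally by $e_{X,z}-e_{Z,z}$, so lex comparisons of $(\mord(V),|V|)$ and of $(\mord(\cJ),V(\cJ))$ agree on tight tubes, yielding the reverse direction. Part (ii) then follows immediately: Theorem~\ref{canonicalcenter} supplies the canonical stratification $V(\cI)=\coprod_iV(\cJ_i)$ and the upper semicontinuous function $\mord_\cI$; transporting each $\cJ_i$ through the bijection of (i) produces the canonical stratification of $Z$ by tubes, and $\mord_Z=(\mord_\cI)_{>1}$ inherits upper semicontinuity and $\Mord_{>1}$-valuedness from $\mord_\cI$ via Lemma~\ref{tubelem}(iii).

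The main obstacle I anticipate is the local characterization of tightness in terms of the initial-ones count, since generators of $\cI$ can be absorbed into the nilpotent structure of $V$ without producing weight-$1$ generators in $\cJ$ (already for the cuspidal curve $V(x^2-y^3)\subset\bbA^2$ the canonical center is $[x^2,y^3]$ with $\mord=(2,3)$ and no initial ones, matching $e_X-e_Z=0$ at the cusp). Working formally-locally via a minimal embedding and exploiting the combinatorial control of $\Mord$ from \S\ref{Mordsec} to govern this absorption is the cleanest route; once the dictionary between tightness and initial-ones count is in hand, the rest of (i) and (ii) is a direct translation through Lemma~\ref{tubelem}.
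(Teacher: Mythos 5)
Your skeleton is the same as the paper's: translate through the tube--center dictionary of Lemma~\ref{tubelem}, observe that everything hinges on tightness and on the fact that for tight tubes the number of initial ones in $\mord(\cJ)$ is pinned down by embedding dimensions (so lexicographic comparisons transfer), and then get (ii) from Theorem~\ref{canonicalcenter}. However, at the decisive step -- that the canonical center of $\cI$ corresponds to a \emph{tight} tube -- you have a genuine gap, and you half-admit it by calling it the main anticipated obstacle. The inference ``the lex order strictly prefers fewer initial ones, hence the canonical center has the minimal ones count'' is a non sequitur on its own: lex-maximality of $\cJ$ only forces the ones count down if you actually exhibit an $\cI$-admissible competitor attaining the smaller count, and producing such a competitor (equivalently, showing the weight-one part of the canonical center is not ``wasted'' on directions outside $\cI$) is precisely the content that needs proof. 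Your proposed verification (``track how generators of $\cI$ contribute to a presentation, some being absorbed into the nilpotent structure by integrality and Theorem~\ref{maxcenterth}'') is a plan, not an argument, and the absorption worry is a red herring: the whole issue lives in cotangent spaces.

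Concretely, with $\mord(\cJ)=(1\.1,\ud)$, $c$ ones and $\ud\in\Mordone$, one has $e_{V,x}=n-c$ (the monomial part of the rounding lies in $m_x^2$ since all entries of $\ud$ are $\ge 2$), and admissibility $V\into Z$ gives $c\ge n-e_{Z,x}$; tightness is exactly the equality $c=n-e_{Z,x}$. The paper closes the gap by noting that a presentation $\cJ=[s_1\.s_c,\ut^\ud]$ of the canonical center can be chosen with $\us\subset\cI$, whence $Z\into V_X(\us)$ and $e_{Z,x}\le n-c=e_{V,x}\le e_{Z,x}$. Alternatively, your route can be completed by exhibiting an explicit admissible competitor with the minimal ones count: the center with rounding $\cI+m_x^2$, i.e.\ $[\us,\ut^{(2\.2)}]$ for a regular system of parameters $(\us,\ut)$ with $\us$ lifting a basis of the image of $\cI$ in $m_x/m_x^2$ (this is the center of the tight tube $V_Z(m_{Z,x}^2)$); any center with more than $n-e_{Z,x}$ ones is lexicographically smaller than it, so maximality of $\cJ$ forces $c=n-e_{Z,x}$. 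Either supplement makes your argument work; as written, the claimed characterization of tightness and its application to the canonical center are asserted rather than proved, which is exactly where the paper's proof does its (short but real) work.
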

\begin{proof}
Let $\cJ$ be an $\cI$-admissible center and let $V$ be the corresponding tube (the vanishing locus of $(\cJ)$). Assume that $\cJ$ is $\cI$-canonical, and let us deduce that $V$ is tight. We can work locally at $x\in V$. Let $n=\dim_x(X)$ and $\mord(\cJ)=(1\. 1,\ud)$, where $\ud\in\Mord_{>1}$ and the multiorder starts with $c$ ones. Then it is easy to see that $e_{V,x}=n-c$. In addition, after shrinking $X$ around $x$ we can assume that there exists a presentation $\cJ=[s_1\.s_c,\ut^\ud]$, and hence $\us\subset\cI$ and $Z\into V_X(\us)$. Thus, $e_{V,x}\le e_{Z,x}\le n-c=e_{V,x}$ and all inequalities are equalities, which means that $V$ is tight in $Z$ at $x$.

We see that in proving either direction in (i) one can assume that $V$ is tight, and then the assertion follows from Lemma~\ref{tubelem}. Furthermore, if $X$ is excellent, then $\cI$ possesses a canonical stratification by Theorem~\ref{canonicalcenter}, so (ii) follows from (i).
\end{proof}

\subsubsection{Formal canonicity}
Now we can proceed similarly to \S\ref{algsec}. First, for any formally equidimensional scheme $Z$ and a point $z\in Z$ the formal completion $\hatZ_z$ embeds into a regular scheme with constant codimension and hence possesses a canonical tube $\hatV_z$. We define the {\em formal width function} $\ford_Z\:Z\to\Mord$ by $\ford_Z(z)=\mord(\hatV_z)$. Also, a tube $V$ of $Z$ is called {\em formally canonical} if $|V|$ consists of all points where $\ford_Z$ attains the maximal value and $V\times_Z\hatZ_z=\hatV_z$ for any $z\in V$.

\begin{lem}\label{formaltube}
Let $Z$ be a formally equidimensional scheme with a tube $V$, and let $f\:Z'\to Z$ be a regular morphism and $V'=V\times_ZZ'$ the pullback.

(i) If $f$ is surjective and $V'$ is canonical, then $V$ is canonical.

(ii) If $V$ is formally canonical, then $V$ is canonical.

(iii) If $V$ is formally canonical, then $V'$ is formally canonical, and the opposite implication holds whenever $f$ is surjective.
\end{lem}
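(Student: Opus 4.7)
The argument mirrors Lemma~\ref{formalcenter}, with Remark~\ref{tuberem}(vi) providing pullback of tubes, widths, and tightness along regular and completion morphisms, and Lemma~\ref{embedded} serving as the bridge between tubes and canonical centers whenever an ambient regular formal embedding is available.

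For (i) and (ii), assume $V$ is not canonical. Then on some open $U\subseteq Z$ meeting $V$ there is a tight tube $V_1\neq V\cap U$ with $\inv(V_1)\ge\inv(V\cap U)$; choose $z\in|V_1|$, with $z\notin|V|$ in the equal-multiorder case. For (i), the pullback $V'_1=V_1\times_U f^{-1}(U)$ is a tight tube of the same width, and surjectivity of $f$ gives a point of $V'_1$ over $z$ witnessing that $V'$ is not canonical on $f^{-1}(U)$, contradicting the hypothesis. For (ii), the pullback of $V_1$ to $\hat Z_z$ is a tight tube of invariant $\ge\inv(V_1)\ge\inv(V)$, so $\mord(\hat V_z)\ge\mord(V)$. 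Whether the multiorder is strict (then $\hat V_z$ cannot equal $V\times_Z\hat Z_z$, since the latter has multiorder $\mord(V)$) or equal (then $z\notin|V|$ nevertheless lies in the maximality locus of $\ford_Z$), formal canonicity of $V$ is violated.

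The heart of (iii) is the pointwise formal statement: for any regular morphism $\hat W\to\hat Y$ between formal-local formally-equidimensional schemes, the canonical tube of $\hat Y$ pulls back to the canonical tube of $\hat W$. Granting this, \cite[Lemma~2.1.3]{dream_derivations} yields regularity of $\hat Z_{z'}\to\hat Z_z$ for every $z'\mapsto z$, whence $\ford_{Z'}=\ford_Z\circ f$; surjectivity of $f$ identifies the maximality loci, and the chain $V'\times_{Z'}\hat Z_{z'}=V\times_Z\hat Z_{z'}=\hat V_z\times_{\hat Z_z}\hat Z_{z'}=\hat V_{z'}$ supplies the pointwise canonicity, completing both directions of (iii). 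I would verify the pointwise claim by taking minimal embeddings $\hat Y\into\hat X_Y$ and $\hat W\into\hat X_W$ into regular formal-local varieties, translating canonical tubes into canonical admissible centers via Lemma~\ref{embedded}, then lifting the composite $\hat\cO_{X_Y}\onto\hat\cO_Y\to\hat\cO_W$ to a local homomorphism $\hat\cO_{X_Y}\to\hat\cO_{X_W}$ by choosing appropriate images of a regular parameter system of $\hat\cO_{X_Y}$ (enlarging $\hat X_W$ by free formal-smooth factors if necessary in characteristic zero) so that the resulting ambient morphism is itself regular. Functoriality of canonical centers under regular morphisms of regular schemes, \cite[Theorem~3.3.15(ii)]{dream_derivations}, then transfers back through Lemma~\ref{embedded} to give the desired equality of canonical tubes. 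The main obstacle is precisely this arrangement of a regular ambient lift in the absence of any canonical relation between the minimal embeddings; once it is in place the remainder of (iii) is formal bookkeeping.
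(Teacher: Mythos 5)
Your parts (i) and (ii) follow the paper's route: the paper proves the one point it makes explicit, namely that tightness transfers, via the formula $e_{Z',z'}=e_{Z,z}+\dim_{z'}(Z'_z)$ applied to both $Z$ and $V$, and then declares the rest analogous to Lemma~\ref{formalcenter}. Your witness-tube argument is exactly that analogue; just note that the transfer of tightness is \emph{not} contained in Remark~\ref{tuberem}(vi) (which only gives that the pullback is a tube of the same width with compatible filtration) -- it is precisely the embedding-dimension computation above, which is the only step the paper writes out.

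In (iii) you correctly isolate the real content, the pointwise statement that the canonical tube of $\hat Z_z$ pulls back to the canonical tube of $\hat Z_{z'}'$ along the regular morphism $\hat Z'_{z'}\to\hat Z_z$, but your proposed resolution has a concrete gap. Suppose you have lifted $\hatcO_{X_Y}\onto\hatcO_Y\to\hatcO_W$ to a local homomorphism $\hatcO_{X_Y}\to\hatcO_{X_W}$ with $X_W$ regular and the ambient map regular. Functoriality of canonical centers (Theorem~\ref{canonicalcenter}(ii), or \cite[Theorem~3.3.15(ii)]{dream_derivations}) then identifies the canonical center of the \emph{pullback ideal} $I_Y\cO_{X_W}$ with the pullback of the canonical center of $I_Y$; but Lemma~\ref{embedded} relates the canonical tube of $W$ to the canonical center of $I_W$, and in general $I_W\supsetneq I_Y\cO_{X_W}$ (the codimensions of $W$ and of $Y\times_{X_Y}X_W$ in $X_W$ differ), so these are canonical centers of different ideals with invariants of different shapes, and no comparison falls out of admissibility alone. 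To make your scheme work you need the embedding of $W$ to be the \emph{base change} of the embedding of $Y$, i.e. $I_W=I_Y\cO_{X_W}$, which amounts to realizing the regular morphism $W\to Y$ as the restriction of a regular morphism of ambient regular schemes -- a genuine lifting/deformation statement (e.g. via formal smoothness of $\cO_Y\to\cO_W$ and lifting along the $I_Y$-adic thickenings, plus a theorem guaranteeing that flatness with geometrically regular closed fiber gives a regular homomorphism), not something achieved by choosing images of a regular parameter system. Relatedly, your assertion that the images can be chosen so that the lift is regular is unsubstantiated: an arbitrary lift of parameters need not be regular (already $t\mapsto x^2$ fails), and the easy fix -- adjoining fresh formal variables and sending the parameters of $\hatcO_{X_Y}$ to them -- does produce a regular lift and a commuting square, but precisely fails the base-change property above. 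So either supply the stronger lifting statement, or replace this step by a different comparison; as written, the final ``transfers back through Lemma~\ref{embedded}'' does not go through.
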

\begin{proof}
For a point $z'\in Z'$ with $z=f(z')$ one has that $e_{Z',z'}=e_{Z,z}+\dim_{z'}(Z'_z)$ and the same is true for $V$. Therefore, if $V$ is tight in $Z$, then $V'$ is tight in $Z'$, and the opposite is true when $f$ is surjective. The remaining arguments are analogous to the proof of Lemmas~\ref{formalcenter}, so we skip the details.
\end{proof}

\subsubsection{Descent of canonical centers}
Now, we can establish the main result about descent of formally canonical centers in the local case.

\begin{lem}\label{cantubedescent}
Assume that $Z$ is a formally equidimensional local G-scheme with the closed point $z$ and $U=Z\setminus \{z\}$ possesses a formally canonical tube $V_U$. Then $Z$ possesses a formally canonical tube.
\end{lem}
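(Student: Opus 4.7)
The plan is to imitate the proof of Lemma~\ref{formaldescent}, with tubes replacing centers. Let $\hat{Z}=\Spec(\hatcO_z)$ and $\hat{U}=\hat{Z}\setminus\{z\}$, and let $\pi\:\hat{Z}\to Z$ be the completion morphism, which is regular and faithfully flat by the $G$-condition. Since $Z$ is formally equidimensional, $\hat{Z}$ is equidimensional and admits a minimal embedding $\hat{Z}\hookrightarrow\hat{X}$ into a regular complete local scheme of constant codimension, as in \S\ref{minemb}. Applying Theorem~\ref{canonicalcenter} to the ideal defining $\hat{Z}$ in $\hat{X}$ and translating via Lemma~\ref{embedded}(ii), I obtain that $\hat{Z}$ admits a canonical (hence formally canonical) tube $\hat{V}$. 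Moreover, by Lemma~\ref{formaltube}(iii) applied to the surjective regular morphism $\hat{U}\to U$, the restriction $\hat{V}|_{\hat{U}}$ coincides with the pullback of $V_U$.

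Next I would descend $\hat{V}$ to a closed subscheme $V\hookrightarrow Z$ satisfying $V\times_Z\hat{Z}=\hat{V}$ by splitting into two cases. If $|\hat{V}|$ meets $\hat{U}$, I take $V$ to be the schematic closure of $V_U$ in $Z$; since tubes are normally flat along their reductions and therefore have no embedded components, $\hat{V}$ is itself the schematic closure of $\hat{V}|_{\hat{U}}$ in $\hat{Z}$, and the compatibility of schematic closures with the flat base change $\pi$ yields $V\times_Z\hat{Z}=\hat{V}$. If instead $|\hat{V}|=\{\hat{z}\}$, then the defining ideal $\hatcI_{\hat{V}}$ is $m$-primary, so $m^N\subset\hatcI_{\hat{V}}$ for some $N$. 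Choosing a split presentation $\hat{V}=V_{\hat{Z}}(\hat{s},\hat{t}^{I_\ud})$ provided by Corollary~\ref{tubecor}(ii) and approximating $\hat{s},\hat{t}$ by elements $s,t\in\cO_z$ modulo a sufficiently high power of $m$, I set $V=V_Z(s,t^{I_\ud})$; the containment $m^N\subset\hatcI_{\hat{V}}$ together with the approximation forces $(s,t^{I_\ud})\cdot\hatcO_z=\hatcI_{\hat{V}}$, so again $V\times_Z\hat{Z}=\hat{V}$.

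Once $V$ is constructed, Theorem~\ref{tubedescent} applied to the surjective regular base change $\hat{V}=V\times_Z\hat{Z}\to V$ shows that $V$ is itself a tube of $Z$ of the same width as $\hat{V}$. Formal canonicity of $V$ then follows from the surjective-descent direction of Lemma~\ref{formaltube}(iii) applied to $\pi$: the pullback of $V$ is the formally canonical tube $\hat{V}$, while its restriction to $U$ recovers the formally canonical $V_U$. The hardest step I anticipate is the invocation of Theorem~\ref{tubedescent}, which is where the $G$-condition enters essentially through the regularity of $\pi$: the nilpotent-parameter conditions do not descend along arbitrary flat morphisms, so without regularity (exploited in Theorem~\ref{tubedescent} via Popescu's theorem) there would be no guarantee that the subscheme $V$ produced by schematic closure or by $m$-adic perturbation is genuinely a tube rather than an arbitrary nilpotent thickening of the appropriate regular reduction. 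The two-case descent itself follows the same pattern as in Lemma~\ref{formaldescent}, with the perturbation case playing the role of the open-ideal case and the schematic-closure case playing the role of the descent-via-intersection case.
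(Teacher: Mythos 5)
Your proposal is correct and follows essentially the same route as the paper: obtain the canonical tube $\hat V$ of $\hat Z=\Spec(\hatcO_z)$ via a minimal embedding and Lemma~\ref{embedded}(ii), descend it to a closed subscheme $V\subseteq Z$ by the same two-case split (support equal to the closed point versus schematic closure of $V_U$, using absence of embedded components and flat base change), and then conclude that $V$ is a tube by Theorem~\ref{tubedescent} and formally canonical by Lemma~\ref{formaltube}(iii). The only deviation is in the open-ideal case, where your approximation-plus-Nakayama perturbation of a presentation (the device the paper needs for centers in Lemma~\ref{formaldescent}) is more work than necessary here: since the defining ideal contains $\hat m^N$ and $\cO_z/m^N\toisom\hatcO_z/\hat m^N$, it descends to $\cO_z$ exactly, tube-ness being recovered afterwards from Theorem~\ref{tubedescent} in either case.
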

\begin{proof}
Again, this is proved precisely as its analog in Lemma~\ref{formaldescent}, so we just outline the argument. The canonical tube $\hatV$ of $\hatZ=\Spec(\hatcO_z)$ exists by Lemma~\ref{embedded}(ii), and it is enough to prove that it is the pullback of a closed subscheme $V$ of $Z$ because then $V$ is a tube by Theorem~\ref{tubedescent} and this tube is formally canonical by Lemma~\ref{formaltube}.

If $\hatV$ is supported at the preimage of $z$, then it is given by an open ideal and hence comes from a scheme $V\into Z$ supported at $z$. Otherwise we take $V$ to be the schematic closure of $V_U$ and use that tubes have no embedded components and schematic closures are compatible with formal completions.
\end{proof}

\subsection{Spreading out canonical tubes}
It remains to show that if a tube is canonical locally at a point on an N-scheme, then the same is true in a neighborhood. As in the case of centers we will have to use Tschirnhaus coordinates.

\subsubsection{Leading term of a tube}
If $V$ is a tube given by an ideal $\cJ\subset\cO_Z$ and $\ocJ$ is the radical of $\cJ$, set $\cJ_{>1}=\ocJ\cJ$ and define the {\em leading term} of $V$ in $Z$ to be the $\cO_\oV$-module $\cT_\cJ=\cJ/\cJ_{>1}$. Informally, it can be viewed as an additional graded piece $\cF_{>1}/\cF_{\ge 1}$ of the valuative filtration, and this graded piece already depends on the embedding in $Z$ and not only on $V$.


\begin{lem}\label{conetube}
Let $Z$ be a formally equidimensional scheme and let $V=V(\cJ)$ be a tight tube with a presentation $\cJ=(\ut^{I_\ud})$. Then
$\cT_\cJ$ is generated by the images of monomials $\ut^\ua$ with $\ua\cdot\ud^{-1}=1$.
\end{lem}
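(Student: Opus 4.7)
The statement is local on $Z$, so I would first reduce to the case where $V$ admits a global tight presentation $\cJ=(\ut^{I_\ud})$ with nilpotent parameters $\ut=(t_1,\dots,t_n)$. By Lemma~\ref{tubelem0} in the tight case the reduction is cut out by $(\ut)$ alone, so the radical satisfies $\ocJ=(\ut)$. Since $\cJ$ is generated as an ideal by $\{\ut^\ua:\ua\in I_\ud\}$, the quotient $\cT_\cJ=\cJ/\ocJ\cJ$ is generated over $\cO_\oV$ by the images $\{\overline{\ut^\ua}:\ua\in I_\ud\}$. The task is then to show that each $\overline{\ut^\ua}$ with $\ua\cdot\ud^{-1}>1$ already lies in the $\cO_\oV$-submodule generated by the images of monomials on the hyperplane $\ua'\cdot\ud^{-1}=1$.

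To verify this I would pass to the formal completion $\hatZ_z$ at a point $z\in V$; both $\cJ$, $\ocJ$ and the $\cO_\oV$-module structure are compatible with completion by Remark~\ref{tuberem}(vi) together with faithful flatness of $\cO_z\to\hatcO_z$, so the claim may be checked on $\hatZ_z$. In the complete local situation $Z=\Spec A$ admits a minimal embedding $Z\hookrightarrow X$ into a regular complete local scheme of the same embedding dimension (\S\ref{minemb}). By Lemma~\ref{tubelem}(i)--(iii) the tight tube $V\hookrightarrow Z\hookrightarrow X$ corresponds to an integral $\cI_Z$-admissible center $\cJ^X=[\us,\ut'^\ud]$ on $X$, where $\us$ cuts out $Z$ in $X$ and $\ut'$ lifts $\ut$; the rounding $(\cJ^X)$ equals the ideal of $V$ in $X$. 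Lemma~\ref{conebase} then describes the center-theoretic leading term $\cT_{\cJ^X}^{\mathrm{ctr}}=(\cJ^X)/(\cJ^X)_{>1}$ as a free $\cO_\oV$-module with basis given by the monomials $\us^\ub\ut'^\ua$ satisfying $\sum b_j+\ua\cdot\ud^{-1}=1$.

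The final step is to transport this description back across the embedding: modulo $\cI_Z=(\us)$ every monomial involving an $s_j$ dies, leaving only images of monomials $\ut^\ua$ with $\ua\cdot\ud^{-1}=1$ as a generating set downstairs in $\cO_Z$. The main obstacle, and the place where I expect the bulk of the technical work, is the precise identification of $\cJ_{>1}=\ocJ\cdot\cJ$ on $Z$ with the image of the higher-valuation ideal $(\cJ^X)_{>1}$ from the center on $X$; this rests on tightness (which forces $\ocJ=(\ut)$) together with the integrality assumption $\ud\in\Mord_{>1}$, and is essentially a combinatorial matching of monomial supports in $I_\ud$ of the same flavor as the argument in Lemma~\ref{comblem}. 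Once this identification is in place, the asserted generating set for $\cT_\cJ$ is exactly what survives after pushing Lemma~\ref{conebase} through the minimal embedding.
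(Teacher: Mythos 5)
Your overall strategy is the same as the paper's: complete at a point, pass to a minimal embedding $Z\into X$, translate the tight tube into an integral center on $X$, and read off generators from Lemma~\ref{conebase}. But the transport across the embedding is misstated. In a minimal embedding the ideal $\cI_Z$ of $Z$ lies in $m_X^2$, so it is never generated by part of a regular system of parameters unless it is zero; and since $V$ is tight in $Z$ while minimality gives $T^*_{Z,z}=T^*_{X,z}$, the tube $V$ is tight in $X$ as well, so by Lemma~\ref{tubelem0} and Lemma~\ref{tubelem} the corresponding center has no $\us$-part at all: it is $\cJ'=[\ut'^\ud]$, exactly as in the paper's proof. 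Your center $[\us,\ut'^\ud]$ ``with $\us$ cutting out $Z$'' does not exist (it would force $Z$ to be regular), and consequently the final step ``modulo $\cI_Z=(\us)$ every monomial involving an $s_j$ dies'' has no content in the only interesting case, namely singular $Z$. What the paper does instead is observe that $\cT_\cJ$ is a quotient of the free module $\cT_{\cJ'}$ (basis: the $\ut'$-monomials of value $1$, by Lemma~\ref{conebase}) by the image $t_{\cJ'}(\cI_Z)$ of the whole ideal of $Z$, with no monomial structure on $\cI_Z$ required, since further quotienting only helps a generation statement.

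The step you postpone to the last sentences --- comparing $\cJ_{>1}=\ocJ\cJ$ on $Z$ with the valuation ideal $(\cJ')_{>1}$ of the center on $X$ --- is where the actual content of the lemma sits, and as written you leave it unproved; the pointer to ``combinatorial matching of monomial supports as in Lemma~\ref{comblem}'' does not close it, because the two filtrations do not match monomially. Already for $\ud=(2,3)$ the monomial $t_1t_2^2$ has value $7/6>1$ yet lies neither in $(\ut)\cdot(\ut^{I_\ud})$ nor in $(t_1^2,t_2^3)$, so monomials of value $>1$ do not die in $\cJ/\ocJ\cJ$ for purely combinatorial reasons; any argument bridging this discrepancy has to go through the quotient by $t_{\cJ'}(\cI_Z)$ (this is precisely the ``it remains to note'' sentence in the paper's proof, and it is the point that needs spelling out, not the $\us$-bookkeeping). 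So: right route, but the identification of the center in the minimal embedding is wrong, and the key identification of the two ``higher'' ideals is a genuine gap in your sketch.
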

\begin{proof}
The claim can be checked after the completion at a point, hence we are reduced to the case when $Z=\Spec(A)$ with a complete local $A$. Then there exists a minimal embedding $Z\into X$ and $V$ is given by a center $\cJ'=[\ut'^\ud]$ in $X$, where $\ut'$ is a lift of $\ut$ to $X$. It remains to note that $\cT_\cJ$ is the quotient of $\cT_{\cJ'}$ by the image $t_{\cJ'}(\cI)$ of the ideal $\cI\subset\cO_X$ of $Z$, hence the claim follows from Lemma~\ref{conebase}.
\end{proof}

\subsubsection{Kernel of the leading term}
In view of Lemma \ref{conetube} by the {\em kernel} of the leading term of a tube with a presentation $V=V_Z(\ut^{I_\ud})$ we mean the kernel of the map $F\to\cT_\cJ$, where $F$ is the free $\cO_V$-module spanned by monomials $\ut^\ua$ with $\ua\cdot\ud^{-1}=1$. Moreover, this notion globalizes to arbitrary tubes of $Z$ because the local definition is independent of the presentation. Indeed, one can pass to formal completions, use a minimal embedding $Z\into X$ and then $K$ is nothing else but the leading term $t_{\cJ'}(\cI)$ of the ideal $\cI$ defining $Z$ in $X$.

\subsubsection{Tschirnhaus coordinates}
Let $V$ be a tube of $Z$ of width $(d_1\.d_n)$ and let $\cJ$ be the ideal defining $V$. By a {\em Tschirnhaus presentation} of $V$ in $Z$ we mean a tight presentation $\cJ=(\ut^{I_\ud})$, where $t_1\.t_n\in\cO_Z$ satisfy the following conditions: for any $1\le i\le n$ there exist $c_1\.c_i\in\NN$ and an element $f\in K$ in the kernel of $\cT_\cJ$ of the form $f=\sum_{\ua} b_\ua\ut^\ua$ with $\ua\cdot\ud^{-1}=1$ such that

\begin{itemize}
\item[(i1)] there exists $\ua=(c_1\.c_i,0\.0)$ with $b_\ua=1$ and $c_i\neq 0$,
\item[(i2)] there exists no $\ua$ starting with $(c_1\.c_{i-1},c_i-1)$ and such that $b_\ua\ne 0$.
\end{itemize}

Similarly to Theorem \ref{tschirnhausexist} the following result holds:

\begin{theor}\label{tubetschirnhaus}
The following conditions on a tube $V$ of a local formally equidimensional G-scheme $Z$ are equivalent:

(i) $V$ is formally canonical,

(ii) $V$ is canonical,

(iii) $V$ possesses a Tschirnhaus presentation (in particular, $V$ is tight)
\end{theor}
\begin{proof}
We will act similarly to the proof of Theorem \ref{tschirnhausexist}, and will also reduce some part of the proof to that theorem. By Lemma~\ref{formaltube}(ii) we know that (i) implies (ii). Assume that $V$ is canonical and let us construct a Tschirnhaus presentation by modifying a tight presentation $\cJ=(\ut^\ud)$. By induction, we can assume that the two Tschirnhaus conditions are satisfied for $1\.i-1$, and we should prove that replacing $t_i\.t_n$ by $t'_i\.t'_n$ one can obtain another presentation which satisfies also (i1) and (i2). 

Consider the kernel $K=\Ker(F\to\cT_\cJ)$ of the leading term, where $F$ is the free $\cO_\oV$-module with basis formed by $\ut^\ua$ with $\ua\cdot\ud^{-1}=1$. We claim that $K$ contains an element of the form $f=\sum_\ua b_\ua\ut^\ua$ involving a non-zero term $b_\uc t^\uc$ such that $\uc\cdot\ud_0^{-1}=1$ and $c_j\neq 0$ for some $j\ge i$. To prove this it suffices to show that if the claim fails, then there exists $\ud'=(d'_1\.d'_n)>\ud$ such that $V'=V_Z(\ut^{I_{\ud'}})$ is also a tube, contradicting the maximality of $V$. Note that the coordinates of the presentation are given and we only play with the exponents, hence the claim can be checked after formal completion. In this case, $Z$ possesses a minimal embedding $Z\into X$ into a regular scheme, and then $K=t_{\cJ'}(\cI)$, where $\cJ'=[\ut'^\ud]$ for a lift $\ut'\subset\cO_X$ of $\ut$ and $\cI$ is the ideal of $Z$. It was observed in the second paragraph of the proof of Theorem~\ref{tschirnhausexist} that if $t_{\cJ'}(\cI)$ does not contain such an element $f$ as above, then there exists an $\cI$-admissible center $\cJ'_\veps=[\ut'^{\ud_\veps}]$ such that $\ud_\veps>\ud$. This center does not have to be integral, so its rounding $(\cJ'_\veps)=(\ut'^{I_{\ud_\veps}})$ does not have to be a tube. However, the canonical center $\cJ''$ of $(\cJ'_\veps)$ is an integral center, and it is easy to see that it has a presentation $\cJ''=[\ut'^{\ud'}]$ with the same coordinates $\ut'$ and $\ud'\ge\ud_\veps$ (the equality holds if and only if $\cJ'_\veps=\cJ''$ is itself integral). For example, one can use the same algebraization and torus equivariance argument as in the proof of Theorem~\ref{maxcenterth}. Thus, $V=(\ut^{I_{\ud'}})$ is a tube in $Z$ with a larger invariant.

Once we have an appropriate element $f\in K$, we act precisely as in the proof of \ref{tschirnhausexist}. First, renormalize $f$ so that $b_\uc=1$. Then note that the monomial only involves $t_j$ with $d_j=d_i$ and use a general enough substitution $t_j+b_it_i$ with $b_i\in\QQ$ and subsequent renormalization to achieve that $f$ contains a monomial $xt_i^e$, where $x$ is a monomial in $t_1\.t_{i-1}$. After this step in addition to the two Tschirnhaus conditions for $1\.i-1$, the condition (i1) holds. Finally, the appropriate Tschirnhaus substitution $t'_i=t_i-e^{-1}\sum_{j=1}^l xy_j$ preserves these conditions and also achieves that (i2) is satisfies.

(iii)$\Longrightarrow$(i) Assume now that $\cJ=(\ut^{I_\ud})$ is a Tschirnhaus presentation and let us prove that $V$ is formally canonical. Passing to completions preserves the Tschirnhaus presentation, hence we can assume that $Z=\Spec(A)$, where $B$ is a complete local ring. Choose a minimal embedding $Z\into X=\Spec(B)$ and let $\ut'\in A$ be liftings of $\ut$. Then $\cJ'=[\ut'^\ud]$ is Tschirnhaus presentation too, and hence $\cJ'$ is a canonical center by Theorem~\ref{tschirnhausexist}. So, $V$ is a canonical tube of $Z$ by Lemma~\ref{embedded}(i).
\end{proof}

As in the case of canonical centers and Corollary \ref{extendcor}, the Tschirnhaus conditions are open: if a tube $V$ possesses a Tschirnhaus presentation at a point $z$, then the same presentation is Tschirnhaus for any point in a neighborhood of $z$. Therefore Theorem~\ref{tubetschirnhaus} implies the following result:

\begin{cor}\label{extendcor1}
Assume that $Z$ is a formally equidimensional excellent scheme, $V$ a closed subscheme and $z\in Z$ a point such that the localization $V_z$ is the canonical tube of $Z_z$. Then there exists a neighborhood $U$ of $z$ such that $V_U=V|_U$ is the canonical tube of $U$.
\end{cor}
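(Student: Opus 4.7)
The plan is to mirror the proof of Corollary~\ref{extendcor}, substituting the tube-theoretic Tschirnhaus presentation supplied by Theorem~\ref{tubetschirnhaus} for the center-theoretic one used there. Applying that theorem to the local $G$-scheme $Z_z$ and its canonical tube $V_z$, I obtain a Tschirnhaus presentation of the ideal of $V_z$ of the form $(\ut^{I_\ud})$, with $\ut\subset\cO_{Z,z}$ and $\ud=\mord(V_z)$, together with finitely many elements $f_1,\dots,f_n$ in the kernel of the leading term whose decompositions along the monomial basis witness the two Tschirnhaus conditions (i1) and (i2).

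After shrinking $U$ around $z$, the tuples $\ut$ and $f_i$ lift to regular functions on $U$, the finitely many monomial relations $\ut^{I_\ud}\subset\cI_V$ hold throughout $U$, and the inclusion $V|_U\subseteq V_U(\ut^{I_\ud})$ becomes an equality because it is one at $z$ by Lemma~\ref{tubelem0}. Shrinking $U$ further, I would invoke the N-condition to ensure that the closed subscheme $V_U(\ut)$ is regular and that the images of the monomials $\ut^\ua$ with $\ua\in\NN^n\setminus I_\ud$ remain linearly independent in $\gr_\cN(\cO_{V|_U})$ over $\cO_{\oV|_U}$, so that by Corollary~\ref{tubecor}(ii) the tuple $\ut$ is a family of nilpotent parameters exhibiting $V|_U$ as a tight tube in $U$ of width $\ud$. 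The conditions (i1) and (i2) depend on only finitely many leading-term coefficients of the $f_i$, so they are open and are preserved after a further shrinking.

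With a Tschirnhaus presentation of $V|_U$ available on the nose, Theorem~\ref{tubetschirnhaus} applied at each point $y\in V|_U$ shows that $V_y$ is the canonical tube of $Z_y$; in particular $\ford_Z(y)=\ud$ for every $y\in V|_U$ and each formal completion of $V|_U$ equals the formal canonical tube of the corresponding completion of $Z$. To upgrade this pointwise canonicity to the assertion that $V|_U$ is the canonical tube of $U$, I would argue as in Lemma~\ref{formaltube}(i)--(ii) by contradiction: a tight tube $V'\ne V|_U$ on some open $U'\subseteq U$ with $\inv(V')\ge\inv(V|_U)$ would have to disagree with $V|_U$ at some point $y'\in U'$, and pulling back to $\hatZ_{y'}$ would contradict the uniqueness of the canonical tube of $\hatZ_{y'}$ already supplied by the embedded case in Lemma~\ref{embedded} and Theorem~\ref{canonicalcenter}.

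The hardest step I expect is the spreading-out of the tube conditions themselves. Unlike the center case, where being a center is expressed by the single integrality condition $\cJ=[\ut^\ud]$, the definition of a tube bundles three rigid requirements -- the nilradical gives a regular reduction, $\ut^{I_\ud}=0$ holds exactly, and $\gr_\cN$ is free on the prescribed monomials -- and propagating all of them from $z$ to a neighborhood requires using both the N-condition and the coordinate characterization of Corollary~\ref{tubecor}(ii) carefully, in order not to enlarge $V$ inadvertently. Once this is in place, the Tschirnhaus part and the passage from formal to global canonicity run parallel to Corollary~\ref{extendcor} and its role in Theorem~\ref{canonicalcenter}.
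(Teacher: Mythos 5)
Your proposal follows essentially the same route as the paper: extract a Tschirnhaus presentation of $V_z$ from Theorem~\ref{tubetschirnhaus}, spread the presentation and the finitely many leading-term conditions (i1)--(i2) to a neighborhood using their openness together with the N-condition (exactly as in the center case, Corollary~\ref{extendcor}), and then apply Theorem~\ref{tubetschirnhaus} again at nearby points to conclude canonicity of $V|_U$. The paper's own argument is just this observation stated in one sentence before the corollary, so your write-up is a correct, more detailed rendering of the same proof.
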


\subsection{Tubular blowings up}
The last ingredient we need for non-embedded resolutions is an analogue of the normalized root blowing up. Of course in the embedded case this should be compatible with the normalized root blowing up of a regular ambient scheme. It is not clear if such an operation can be defined by a natural universal property, so we will give an explicit definition in terms of a presentation.

\subsubsection{Tubular Rees algebras}
As in \cite[\S3]{Quek-Rydh} a Rees algebra on $Z$ is a finitely generated graded $\cO_X$-algebra $\cR=\oplus_{d\in\NN}\cI_dt^d\subseteq\cO_X[t]$ such that $\cI_d\supseteq\cI_{d+1}$ for any $d$. Given a scheme $Z$ with a tight tube $V$ and a number $N\ge 1$, we define {\em tubular blowing up} $Z'=\Bl_{\sqrt[N]V}(Z)$ as follows. If $V$ possesses a tight presentation $V=V_Z(\ut^{I_\ud})$, then the {\em tubular Rees algebra} $\cR=\cR_{\sqrtNV}\subset\cO_X[t]$ is defined as follows: the graded piece $\cR_n$ is generated by monomials $\ut^\ua$ with $N\sum_i\frac{a_j}{d_j}\le n$. Unlike fractional powers of weighted centers, $\sqrtNV$ is only a notation, but it indicates the analogy.

\begin{lem}\label{tuberees}
Let $Z$ be a formally equidimensional scheme, $V$ its tube and $N\ge 1$ a natural number.

(i) If $V$ possesses a presentation, then $\cR_{\sqrtNV}$ is independent of the choice of a presentation, and hence the notion of tubular Rees algebras globalizes to the general case.

(ii) If $X$ is regular, $Z\into X$ is a tight closed immersion and $\cJ$ is the center of $X$ whose rounding defines $V$, then the tubular Rees algebra $\cR_\sqrtNV$ is the restriction to $Z$ of the Rees algebra $\cR_\sqrtNJ$ of the center $\sqrtNJ$.
\end{lem}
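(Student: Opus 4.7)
The plan is to establish (ii) first, which is essentially tautological once the correspondence between tubes in a regular scheme and integral centers is invoked, and then to deduce (i) from (ii) by reducing to the complete local case via formal completion and a minimal embedding.

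For (ii), the first observation is that tightness composes: since $V$ is tight in $Z$ and $Z \into X$ is tight, the equalities $e_{V,z} = e_{Z,z} = e_{X,z}$ at every $z \in V$ make $V$ tight in $X$ as well. Consequently, any tight presentation $V = V_Z(\ut^{I_\ud})$ lifts to a tight presentation $V = V_X(\tilde\ut^{I_\ud})$ in $X$ via arbitrary preimages $\tilde\ut \subset \cO_X$, and by Lemma~\ref{tubelem}(i) the integral center associated to $V$ is $\cJ = [\tilde\ut^\ud]$. The $n$-th graded piece of $\cR_\sqrtNJ$ is then generated by the monomials $\tilde\ut^\ua$ whose weights satisfy the valuative inequality $N\ua\cdot\ud^{-1} \ge n$. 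Restricting to $Z$ substitutes $\tilde\ut$ by $\ut$ and yields precisely the $n$-th graded piece of $\cR_\sqrtNV$ as defined, giving (ii).

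For (i), equality of two sub-$\cO_Z$-algebras of $\cO_Z[t]$ can be tested stalk-locally and then after the faithfully flat extension $\cO_{Z,z} \to \hatcO_{Z,z}$ at each $z \in Z$, so we reduce to the case $Z = \Spec(A)$ with $A$ a complete local ring. By \S\ref{minemb}, $A$ admits a minimal embedding $Z \into X$, which is tight by construction, so (ii) applies. Given two tight presentations $V = V_Z(\ut^{I_\ud}) = V_Z(\ut'^{I_\ud})$ --- both necessarily of the same width $\ud$ by Corollary~\ref{tubecor}(i) --- choose lifts $\tilde\ut, \tilde\ut' \subset \cO_X$. By the bijection in Lemma~\ref{tubelem}(i)(ii), both yield presentations of the unique $\cI_Z$-admissible integral center $\cJ$ on $X$ corresponding to $V$, namely $\cJ = [\tilde\ut^\ud] = [\tilde\ut'^\ud]$. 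Since the $\QQ$-ideal $\cJ^{n/N}$ depends only on $\cJ$ and not on any chosen presentation, the Rees algebra $\cR_\sqrtNJ$ is intrinsic to $\cJ$, and applying (ii) to each of the two presentations shows that both candidate tubular Rees algebras on $Z$ coincide with the restriction of $\cR_\sqrtNJ$, hence with each other. Globalization in (i) is then immediate since the locally defined tubular Rees algebras patch.

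The main obstacle is verifying that two lifts of tight presentations of $V$ in $Z$ genuinely give rise to the same center $\cJ$ on $X$, so that the Rees algebra computed upstairs is well-defined independently of all choices. This rests on the intrinsic nature of $\QQ$-ideals from \cite{dream_derivations} together with the rigid identification of tubes in a regular scheme with integral centers in Lemma~\ref{tubelem}; once these are in hand the remainder is an essentially bookkeeping translation between the valuative conditions on $\ua$ cutting out the generators of $\cR_\sqrtNJ$ and those cutting out $\cR_\sqrtNV$.
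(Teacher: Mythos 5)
Your proposal is correct and follows essentially the same route as the paper: prove (ii) by lifting a tight presentation of $V$ to a presentation $\cJ=[\tilde\ut^{\ud}]$ of the center and using that $\cR_{\sqrtNJ}$ is intrinsic to $\cJ$ (its graded pieces being generated by the monomials with $N\ua\cdot\ud^{-1}\ge n$), then deduce (i) by checking the equality formally locally and invoking a minimal embedding together with (ii). Your added details (tightness composing along $V\into Z\into X$, faithfully flat passage to the completion, and independence of the lift via Lemma~\ref{tubelem}) merely spell out steps the paper leaves implicit.
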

\begin{proof}
(ii) Assume first that $V$ possesses a presentation $V=V_Z(\ut^{I_\ud})$. Then it lifts to a presentation $\cJ=[\ut'^\ud]$, and the explicit description of the Rees algebras of centers implies the claim. In particular, $\cR_\sqrtNJ$ is determined by $\cJ$ and $N$ (or just by $\sqrtNJ$) and does not depend on the presentation. The global case follows.

(i) The claim can be checked formally locally, in which case $Z$ admits a minimal closed immersion $Z\into X$ into a local regular scheme, and the claim follows from (ii).
\end{proof}

\subsubsection{Tubular blowings up and relation to weighted blowings up}
Recall that the {\em blowing up} along a Rees algebra $\cR$ is defined to be the stacky proj $$\Bl_\cR(X)=\cProj_X(\cR):=[(\Spec_X(\cR)\setminus V(\cR_{>0}))/\GG_m],$$ which means that one refines the usual proj construction by taking the stack theoretic quotient by the $\GG_m$-action.
Naturally, we define the {\em tubular blowing up} of $Z$ along $\sqrtNV$ to be the blowing up of the associated Rees algebra $\Bl_\sqrtNV(Z)=\cProj_Z(\cR_\sqrtNV)$.

\begin{lem}\label{tubeblow}
Let $Z$ be a formally equidimensional scheme, $V$ its tube, $N\ge 1$ a natural number and $Y=\Bl_\sqrtNV(Z)$.

(i) If $Z'\to Z$ is a regular morphism and $V'=V\times_ZZ'$, then $\Bl_{\sqrt[N]{V'}}(Z')=Y\times_ZZ'$.

(ii) If $Z\into X$ is a closed immersion with a regular target and $\cJ$ is the center on $X$ corresponding to $V\into X$, then the tubular blowing up $Y\to Z$ is the strict transform of the weighted blowing up $\Bl_{\cJ^{1/N}}(X)\to X$.
\end{lem}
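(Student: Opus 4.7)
The plan for (i) is to combine Lemma~\ref{tuberees}(i) with the compatibility of $\cProj$ with arbitrary base change. Working locally, pick a presentation $V = V_Z(\ut^{I_\ud})$; by Remark~\ref{tuberem}(vi) the images of $\ut$ in $\cO_{Z'}$ give a presentation $V' = V_{Z'}(\ut^{I_\ud})$ of the pullback, so inspection of the monomial description of the tubular Rees algebra immediately yields $\cR_{\sqrt[N]{V'}} = \cR_{\sqrt[N]V}\otimes_{\cO_Z}\cO_{Z'}$. Since the relative $\Spec$, the removal of $V(\cR_{>0})$, and the quotient by $\GG_m$ all commute with base change, one obtains $\Bl_{\sqrt[N]{V'}}(Z') = Y\times_Z Z'$.

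For (ii) the plan is first to reduce to the case where $Z\hookrightarrow X$ is tight, so that Lemma~\ref{tuberees}(ii) applies: a regular ambient scheme $X_0\hookrightarrow X$ containing $Z$ tightly can be produced locally (in the complete local case via the minimal embedding of \S\ref{minemb}, and compatibility under the re-embedding $X_0\hookrightarrow X$ follows from a re-embedding argument as in Theorem~\ref{embeddedth}(v)). In the tight case, Lemma~\ref{tuberees}(ii) gives $\cR_{\sqrt[N]V} = \cR_{\sqrt[N]\cJ}\otimes_{\cO_X}\cO_Z$, and the base-change compatibility established in (i) identifies $Y$ with the scheme-theoretic pullback $X'\times_X Z$, where $X' := \Bl_{\cJ^{1/N}}(X)$.

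What remains, and what I expect to be the main obstacle, is to show that this pullback equals the strict transform of $Z$ in $X'$; equivalently, that $Y$ has no associated components supported in the exceptional divisor $E\subset X'$. The plan is to work in the standard charts of the stacky weighted blowing up associated to a presentation $\cJ = [\ut'^\ud]$: in such a chart each $t'_i$ pulls back to $s^{N/d_i}\tau_i$ for a chart uniformizer $s$ of $E$ and new coordinates $\tau_i$, so a monomial $\ut'^\ua$ pulls back to $s^{N\ua\cdot\ud^{-1}}$ times a monomial in the $\tau_i$. Tightness gives $\cI_Z\subseteq(\cJ) = (\ut'^{I_\ud})$, so every generator of $\cI_Z$ is an $\cO_X$-linear combination of monomials $\ut'^\ua$ with $\ua\cdot\ud^{-1}\ge 1$; extracting the maximal common power of $s$ from each such generator produces a torsion-free ideal on the chart, and a direct comparison of monomial weights identifies it with the ideal cut out by the restricted Rees algebra $\cR_{\sqrt[N]V}$. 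Reducing via formal completion to the complete local case allows one to trivialize $\cJ$ along a minimal embedding and makes this chart-level verification the cleanest.
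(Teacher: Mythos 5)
Your treatment of (i) is correct and is the same as the paper's: by Remark~\ref{tuberem}(vi) nilpotent parameters pull back to nilpotent parameters, so the tubular Rees algebra pulls back under the flat morphism $Z'\to Z$, and the stacky $\cProj$ construction commutes with this base change.

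Part (ii), however, breaks at its pivot. Lemma~\ref{tuberees}(ii) identifies $\cR_{\sqrtNV}$ with the \emph{image} Rees algebra $\cR_{\sqrtNJ}\cO_Z\subseteq\cO_Z[t]$, not with the tensor product $\cR_{\sqrtNJ}\otimes_{\cO_X}\cO_Z$; these differ by a kernel, and that kernel is precisely what distinguishes the strict transform from the total transform. Moreover, the base-change compatibility of (i) is only available for regular (hence flat) morphisms, so it cannot be invoked for the closed immersion $Z\into X$; the identification $Y=X'\times_XZ$ is in fact false in general. For instance, take $X=\Spec(k[x,y])$, $Z=V(x^2-y^3)$, $V=V_Z(x^2,xy^2,y^3)$, so $\cJ=[x^2,y^3]$, and $N=6$: the fiber product $X'\times_XZ$ is the total transform, cut out in a chart by $s^6(x'^2-y'^3)$, and contains the exceptional divisor, whereas $Y$ is the regular stacky curve. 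Consequently the goal you set for the rest of the argument --- showing that this pullback has no associated components in the exceptional divisor --- is not just hard, it is unattainable. The paper's route avoids any fiber product: the strict transform of a closed subscheme under the blowing up of a Rees algebra $\cR$ on $X$ equals $\cProj_Z(\cR\cO_Z)$ by \cite[Corollary~3.2.14]{Quek-Rydh}, and combined with Lemma~\ref{tuberees}(ii) this gives (ii) immediately. If you want to argue by hand in charts, note also that dividing each generator of $\cI_Z\cO_{X'}$ by its maximal power of the chart uniformizer $s$ only yields an ideal contained in the strict transform ideal $(\cI_Z\cO_{X'}:s^\infty)$, so a saturation argument is still required --- at which point you are essentially reproving the cited corollary. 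Finally, the preliminary reduction to a tight embedding is heavier than necessary: since $V$ is tight in $Z$, one may choose a presentation $\cJ=[\us,\ut^\ud]$ with $\us\subset\cI_Z$, and the restriction identity of Lemma~\ref{tuberees}(ii) then holds directly.
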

\begin{proof}
The first claim follows from the functoriality of tubes for regular morphism, see Remark \ref{tuberem}(vi), and funtoriality of blowings up of Rees algebras with flat morphisms. In the second claim we use Lemma~\ref{tuberees}(ii) and \cite[Corollary~3.2.14]{Quek-Rydh}.
\end{proof}

\subsection{Main resolution results}
Finally, we can summarize all we have done in \S\ref{nonembsec} in the two main results: existence of tubular stratification and the resulting non-embedded resolution.

\subsubsection{The canonical stratification by tubes}
We have the following analogue of Theorem \ref{canonicalcenter} and the argument is the same up to replacing lemmas about centers by their analogs about tubes.

\begin{theor}\label{canonicaltube}
Let $Z$ be a noetherian locally equidimensional excellent stack of characteristic zero, then

(i) $Z$ possesses a canonical stratification by tubes $Z=\coprod_i V_i$. In particular, the width function $\mord_Z\:Z\to\Mord$ is well-defined and upper semicontinuous.

(ii) If $f\:Z'\to Z$ is a regular morphism, then $\coprod_iV_i\times_ZZ'$ is the canonical stratification of $Z'$ and $\mord_Z=\mord_{Z'}\circ f$.

(iii) Let $V$ be the canonical center with $(d_1\.d_n)=\mord_X(\cI)$, let $N>0$ be a natural number such that $N/d_i\in\NN$ for $1\le i\le n$ and let $Z'=\Bl_{\sqrt[N]V}(Z)\to Z$ be the associated tubular blowing up. Then $\mord(Z')<\mord(Z)$.
\end{theor}
\begin{proof}
(i) Existence of the canonical stratification implies the rest. By notherian induction, it suffices to show that if an open $W\subsetneq Z$ possesses a canonical stratification, then the same is true for a larger open. Choose any generic point $\eta$ of the complement $Z\setminus W$. Then $Z_\eta\setminus \eta$ lies in $W$ and hence possesses canonical stratification by the functoriality. Therefore, $\cI_\eta$ possesses a canonical tube by Lemma~\ref{cantubedescent}. The latter tube extends by Corollary~\ref{extendcor1} to the canonical tube $V_U$ of a neighborhood $V\subseteq Z$ of $\eta$, and the claim follows easily.

Claim (ii) follows from Lemma \ref{formaltube} and (iii) can be checked after the formal completion, when a minimal embedding exists, the width coincides with the multiorder and the latter drops by \cite[Theorem~3.4.4]{dream_derivations}.
\end{proof}

\subsubsection{Dream non-embedded resolution}\label{dreamressec}
Finally, we can define the {\em non-embedded dream resolution} of an arbitrary locally equidimensional reduced excellent scheme $Z$. This is an inductively defined sequence $\dots Z_1\to Z_0=Z$, where for each $i$ we set $Z_{i+1}=\Bl_{\sqrt[N_i]V_i}(Z_i)$ with $V_i$ the canonical tube of $Z_i$ and $N_i$ the smallest positive integer such that $N_j/d_j\in\NN$ for $1\le j\le n$ and $(d_1\.d_n)=\mord(V_i)$. The process stops when a regular $Z_n$ is achieved and so $V_n=Z_n$ has $\mord(V_n)=\emptyset$. Theorem~\ref{canonicaltube} immediately implies our main result on non-embedded resolution -- Theorem~\ref{resolutionth}.

\subsubsection{Destackification}
Destackification of stacks with diagonalizable stabilizers and its relative version are compatible with regular morphisms and the same is true for normalization of qe schemes, hence we obtain the following method to resolve qe schemes which is functorial with respect to regular morphisms.

\begin{cor}\label{resolvecor}
Let $Z$ be a reduced quasi-excellent stack of characteristic zero. Composing the normalization $Z_0=Z^\nor\to Z$, the non-embedded resolution $Z_n\to\dots\to Z_0$ and the relative $Z$-destackification $Z'\to Z_n$ one obtains a proper birational representable morphism $Z'\to Z$ with a regular source. The construction is compatible with arbitrary regular morphisms $Y\to Z$.
\end{cor}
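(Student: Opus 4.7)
The plan is to verify in turn that each of the three operations is defined on the previous output, that the composition has the claimed properties, and that each operation respects regular morphisms.

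First I would justify that the normalization $Z^\nor\to Z$ lands in the scope of Theorem~\ref{resolutionth}. Since $Z$ is reduced quasi-excellent, $Z^\nor\to Z$ is a finite birational morphism and $Z^\nor$ inherits the qe property (finite morphisms preserve G and N). By Remark~\ref{catenaryrem}(ii), the only obstruction to $Z^\nor$ being excellent lies in its completions having components of different dimensions; but since $Z^\nor$ is normal and the G-property preserves normality under completion, each formal completion $\widehat{(Z^\nor)}_z$ is a product of normal local rings, hence equidimensional. Therefore $Z^\nor$ is a formally equidimensional excellent stack.

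Next I would invoke Theorem~\ref{resolutionth} on $Z_0=Z^\nor$ to obtain the finite sequence $Z_n\to\cdots\to Z_0$ of tubular stack-theoretic modifications ending with $Z_n$ regular, and trivial over the regular locus. The $Z_i$ are smooth stacks whose non-trivial stabilizers arise from the $\cProj$-construction in \S\ref{pressec} and are therefore diagonalizable (in fact cyclic, coming from the $\GG_m$-quotient). Apply the relative Bergh--Rydh destackification to produce a proper birational morphism $Z'\to Z_n$ which is representable, with $Z'$ regular. Composing, $Z'\to Z$ is proper (each arrow is proper), birational (each arrow is birational over the regular/normal locus, and $Z^\nor\to Z$ is birational by reducedness), representable (the destackification step eliminates all stabilizers, and the preceding steps are composed into a representable map by the final representable cover), and has regular source $Z'$.

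For functoriality with respect to a regular morphism $f\:Y\to Z$, normalization commutes with regular base change because regular morphisms are flat with geometrically normal fibers, so $Y^\nor=Y\times_Z Z^\nor$. The non-embedded resolution is functorial by Theorem~\ref{resolutionth}(iv), and relative destackification is functorial for flat (in particular regular) morphisms by the Bergh--Rydh construction. Chaining these three compatibilities yields the compatibility of the full composition with $f$. The only delicate point in the proof is the verification that $Z^\nor$ meets the hypotheses of Theorem~\ref{resolutionth}; once one knows normal qe schemes are automatically excellent and formally equidimensional (via Ratliff's theorem plus preservation of normality under completion in the G-case), the rest is a straightforward assembly.
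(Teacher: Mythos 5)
Your proposal is correct and follows the same route the paper takes (which it leaves essentially as a one-line assembly before the statement): normalize, check via \S\ref{catenarysec} and Remark~\ref{catenaryrem}(ii) that the normalization is excellent and (formally) equidimensional so Theorem~\ref{resolutionth} applies, then apply relative destackification, with functoriality coming from the compatibility of each of the three steps with regular morphisms. One small slip: the completion $\widehat{\cO}_{Z^\nor,z}$ is not a ``product of normal local rings'' but a single normal local ring, hence a domain and therefore equidimensional --- which is exactly the fact the paper uses, so your conclusion stands.
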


\subsubsection{Other categories}
The analogues of tubes and centers on regular schemes can be straightforwardly defined in the categories of complex and non-archimedean analytic spaces and excellent formal schemes. Since canonical stratifications and dream principalization and resolution constructed in this paper hold for excellent schemes of characteristic zero and are functorial with respect to regular morphisms, the usual arguments imply that the analogues of Theorems~\ref{canonicalcenter}, \ref{principalizationth}, \ref{embeddedth}, \ref{canonicaltube} and \ref{resolutionth} hold in the categories of complex and non-archimedean analytic spaces and excellent formal schemes of characteristic zero.

\bibliographystyle{amsalpha}
\bibliography{canonical_center}

\end{document}